\theoremstyle{plain}
\newtheorem{Main}{Theorem}
\newtheorem{Thm}{Theorem}[section]
\newtheorem{Prop}[Thm]{Proposition}
\newtheorem{Lem}[Thm]{Lemma}
\theoremstyle{remark}
\newtheorem{Rem}[Thm]{Remark}
\newtheorem{Def}[Thm]{Definition}
\def\max{\operatorname{max}}
\def\min{\operatorname{min}}
\begin{document}
	
	\title[Upper bound for the box dimension of the hyperbolic dynamics]
	{An upper bound for the box dimension of the hyperbolic dynamics via unstable pressure}
	
	\author{Congcong Qu}
	\address{Congcong Qu, College of Big Data and software Engineering, Zhejiang Wanli University, Ningbo, 315107, Zhejiang, P.R.China}
	\email{congcongqu@foxmail.com}
	\date{\today}

	\thanks{}
	
	\subjclass[2010]{37B10,37B40,37C45,37D20,37D35}
	
	\keywords{box dimension, hyperbolic set, unstable pressure}

	\begin{abstract}
		In this paper, we utilize the sub-additive unstable pressure to give an upper bound for the upper box dimension of the $C^1$ hyperbolic set on unstable manifolds. As a by-product, we give a new expression of the topological pressure. This work is inspired by \cite{FS, WCZ, CliPZ}.
	\end{abstract}
	
	\maketitle
	
	
	\section{Introduction}
	It is still open to find the dimension of the non-conformal repeller and that of the hyperbolic set on unstable manifolds. There are only results of the upper and lower bounds for the dimension of general non-conformal repellers and hyperbolic sets on unstable manifolds. The difficulty is due to the different expanding rate on the different direction for non-conformal ones. For basic knowledge of this topic, one can refer to the book \cite{Pes} and the survey papers \cite{CP,BG} and Chapter 13 in \cite{PV}. The basic tool to study dimension theory is borrowed from thermodynamic formalism, namely the topological pressure. Recently, Hu, Hua and Wu \cite{HHW} generalised the topological entropy to the unstable topological entropy for $C^1$ partially hyperbolic diffeomorphisms to measure the complexity on the unstable direction. Hu, Wu and Zhu \cite{HWZh} further generalized this to the unstable topological pressure and Zhang, Li and Zhou \cite{ZhLZh} to the sub-additive ones. We consider to use sub-additive unstable topological pressure to study the dimension of hyperbolic sets on unstable manifolds.
	
	The motivation of this work is two-fold. The first one is that, recently, Feng and Simon gave an upper bound of the upper box dimension of the non-conformal repeller, which improves the result of \cite{BCH} and \cite{Zh}. They proved this by analysing the geometry of the images of balls under a large number of iterations of $C^1$ maps. By further analysing their technique, we find new expressions of the topological pressure. We apply this to study the dimension of the hyperbolic set on unstable manifold.
	
	The second motivation is that, Wang, Cao and Zhao \cite{WCZ} gave upper bounds of the dimension of the subset of non-conformal repellers via the pressure on the subset, not the pressure on the whole repeller. This inspires us that the pressure on the unstable direction might provide us with enough information to estimate the dimension of the hyperbolic set on unstable manifold. The development of the theory of unstable pressure provides us with a good choice to study this question. The root given by the unstable pressure does not exceed that given by the topological pressure. It is natural to ask whether the root given by the unstable pressure gives an upper bound of the box dimension of the hyperbolic set on unstable manifolds. Besides, Climenhaga, Pesin and Zelerowicz \cite{CliPZ} proved that for the hyperbolic set of a $C^{1+\alpha}$ diffeomorphism and a H$\ddot{\text{o}}$lder continuous potential, the topological pressure on the hyperbolic set and that on the local unstable manifold coincides. We use this result to give an upper bound of the Hausdorff dimension of $C^1$ hyperbolic set on unstable manifold via the structural stability of the hyperbolic set. We state this result in Section \ref{appendix} as an appendix. Thus it might be possible to use the unstable pressure to study the dimension of the hyperbolic set on the unstable manifolds. 
	
	Denote by $\overline{\text{dim}}_B A$ the upper box dimension of the set $A$ and $P^u(f|_{\Lambda},\Phi_f(s))$ the sub-additive topological pressure for sub-additive singular-valued potential $\Phi_f(s)$ (see section \ref{Preliminary} for the details). We state our main result as follows:
	\begin{Main}\label{Main A}
		Let $M$ be a compact Riemannian manifold, $f:M\rightarrow M$ be a $C^1$ diffeomorphism and $\Lambda$ be a locally maximal hyperbolic set for $f$. Then for any $x\in\Lambda$, we have
		$$\overline{dim}_B W_{loc}^u(x)\cap \Lambda \leq s^\ast,$$
		where $s^\ast$ is the unique root of $P^u(f|_\Lambda,\Phi_f(s))$.
	\end{Main}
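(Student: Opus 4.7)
My plan is to show that $\overline{\dim}_B(W^u_{loc}(x)\cap\Lambda)\leq s$ for every $s>s^\ast$, and then to let $s\searrow s^\ast$. Since the sub-additive unstable pressure $s\mapsto P^u(f|_\Lambda,\Phi_f(s))$ is continuous and strictly decreasing in $s$, the root $s^\ast$ is unique, and for any $s>s^\ast$ one has $P^u(f|_\Lambda,\Phi_f(s))<0$. Fix such an $s$, and pick an auxiliary $\eta>0$ with $P^u(f|_\Lambda,\Phi_f(s))+\eta<0$.

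The first step is a good covering by unstable Bowen balls. Using the definition of $P^u$ in terms of $(n,\delta)$-spanning (or separated) sets on unstable leaves, I would produce for each $n\ge 1$ a finite $E_n\subset W^u_{loc}(x)\cap\Lambda$ whose unstable Bowen balls $\{B^u_n(y,\delta)\}_{y\in E_n}$ cover $W^u_{loc}(x)\cap\Lambda$ and such that
$$\sum_{y\in E_n}\phi^s\!\bigl(Df^n|_{E^u_y}\bigr)\;\leq\;C_1\,e^{\,n(P^u(f|_\Lambda,\Phi_f(s))+\eta)},$$
where $\phi^s$ is the singular value function underlying $\Phi_f(s)$.

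The second step, in the spirit of Feng--Simon \cite{FS}, is a geometric analysis of each Bowen ball. Since $f^n$ maps $B^u_n(y,\delta)$ into the Riemannian $\delta$-ball in $W^u_{loc}(f^n(y))$ and is, on this scale, a $C^1$ perturbation of its linearization $Df^n|_{E^u_y}$, the set $B^u_n(y,\delta)$ is (up to uniformly bounded $C^1$-distortion) comparable to an ellipsoid in $W^u_{loc}(y)$ whose semi-axes are $\delta$ times the inverses of the singular values of $Df^n|_{E^u_y}$. A classical ellipsoid-covering estimate then yields, for the appropriate range of $r\le\delta$,
$$\#\{r\text{-balls needed to cover }B^u_n(y,\delta)\}\;\leq\;C_2\,\phi^s\!\bigl(Df^n|_{E^u_y}\bigr)\,r^{-s}.$$
Summing over $y\in E_n$ and combining with the pressure estimate yields
$$N_r\bigl(W^u_{loc}(x)\cap\Lambda\bigr)\;\leq\;C_3\,r^{-s}\,e^{\,n(P^u(f|_\Lambda,\Phi_f(s))+\eta)}.$$

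The main obstacle I anticipate is the coupled choice $n=n(r)$: one needs $n(r)$ large enough that the Feng--Simon ellipsoidal regime is valid at scale $r$ uniformly in $y\in E_n$, yet small enough that the factor $e^{n(P^u+\eta)}$ remains bounded. Uniform hyperbolicity on $\Lambda$ supplies a lower bound of the form $\alpha_{\min}(Df^n|_{E^u})\ge C\lambda^n$ for some $\lambda>1$ and $C>0$ independent of the base point, so the natural choice $n(r)\sim|\log r|/\log\lambda$ makes both requirements compatible and gives $N_r(W^u_{loc}(x)\cap\Lambda)\leq C_4\,r^{-s}$, whence $\overline{\dim}_B(W^u_{loc}(x)\cap\Lambda)\le s$; letting $s\searrow s^\ast$ then completes the proof. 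A secondary subtlety is that only $C^1$-regularity of $f$ is available, so classical H\"older-type bounded-distortion arguments do not apply; the uniform control of the linearization error must come from equicontinuity of $Df$ on the compact set $\Lambda$ together with Bowen-scale localization, exactly as in Feng and Simon's treatment of non-conformal repellers.
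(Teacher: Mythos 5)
Your plan is a direct covering argument, which is genuinely different from the paper's route (the paper first converts the unstable pressure into ordinary topological pressure via structural stability, then codes $\Lambda$ by a two-sided Markov shift and builds the cover cylinder-by-cylinder). Unfortunately there is a real gap in the key covering step. You assert that for a suitable $n=n(r)$ one has, uniformly over $y$,
\[
\#\{r\text{-balls covering }B^u_n(y,\delta)\}\;\le\;C_2\,\phi^{s}\bigl((D_yf^n|_{E^u_y})^{-1}\bigr)\,r^{-s}.
\]
(For the singular-valued potential $\Phi_f(s)$ the relevant quantity is $\phi^s((Df^n)^{-1})=e^{-\varphi^s(y,f^n)}$, not $\phi^s(Df^n)$ as written, but that is just a slip.) The number of $r$-balls covering an ellipsoid with semi-axes $a_1\ge\cdots\ge a_u$ is comparable to $\prod_i\max(1,a_i/r)$, and this matches $\phi^s/r^s$ (with $k=[s]$) only in the critical window $a_{k+1}\approx r$. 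If $a_{k+1}\ll r$ — which for a fixed $n(r)$ will happen at many $y$, since only $C^1$ regularity is assumed and the intermediate singular values of $D_yf^n$ fluctuate with $y$ — the left side is at least $1$ while $\phi^s/r^s\to 0$, so the inequality simply fails. The ``natural'' uniform choice $n(r)\sim|\log r|/\log\lambda$ based on worst-case hyperbolicity therefore does not work: different points require different stopping times to place $r$ in the right window of their singular-value spectrum. This is precisely the difficulty that Feng--Simon's stopping-time construction is designed to overcome, and in this paper it appears as the family $\mathcal{A}_r$ of variable-length cylinders (Propositions \ref{new formula}, \ref{unique root}, \ref{root2}) together with the one-step iterative cover (Lemma \ref{cover}, Proposition \ref{cover2}); Proposition \ref{bound 1} then produces the covering of $W^u_{\text{loc}}(x)\cap\Lambda$ by balls of a common radius $r$ with cylinders of point-dependent depth. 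Your proposal is missing this ingredient, and without it the middle estimate does not hold.

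A secondary concern: you extract the spanning family $E_n$ at a fixed radius $\delta_0$ (the size of $W^u_{\text{loc}}(x)$), but $P^u(f,\varPhi,x,\delta)$ is non-decreasing in $\delta$ and $P^u(f,\varPhi)=\lim_{\delta\to0}\sup_xP^u(f,\varPhi,x,\delta)$, so at fixed $\delta_0$ you only get an inequality in the wrong direction unless you also prove $\delta$-independence of the unstable pressure. The paper avoids the issue altogether by first establishing $P^u(f|_\Lambda,\Phi_f(s))=P(f|_\Lambda,\Phi_f(s))$ (Theorem \ref{equal}, via structural stability and the $C^{1+\alpha}$ result of Climenhaga--Pesin--Zelerowicz), so that all subsequent work is done with the ordinary pressure on the compact set $\Lambda$, where the variational principle and the symbolic transfer to $(\Sigma_A,\sigma)$ are available. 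Finally, note that even at the symbolic level the paper cannot work directly with the sub-additive pressure: it handles the additive one-step pressures of $f^{2^n}$, shows the associated roots $t_n$ decrease, and passes to the limit using Proposition \ref{topo pressure appro}. Any correct proof must address these three points: pointwise stopping times, the $\delta$-limit in $P^u$, and the passage from additive to sub-additive pressure.
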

	
	This paper is organized as follows. In section \ref{Preliminary}, we recall some basic definitions and preliminary results. In section \ref{proof}, we give the details of the proof of the main result. In section \ref{appendix}, we give an upper bound of the Hausdorff dimension of the hyperbolic set on the unstable manifold.
	
	\section{Preliminary}\label{Preliminary}
	
	In this section, we recall some definitions and preliminary results.
	\subsection{Hyperbolic set}
	Let $M$ be a compact Riemannian manifold. Suppose $f$ is a $C^{1}$ diffeomorphism on $M$ and $\Lambda\subset M$ is an $f$-invariant compact set. We say $\Lambda$ is a {\it hyperbolic set } if for any $x\in \Lambda$, the tangent space admits a decomposition $T_{x}M=E^{s}_x\oplus E^{u}_x$ with the following properties:
	\begin{enumerate}
		\item The splitting is $Df$-invariant: $D_{x}f E^{\sigma}_x=E^{\sigma}_{f(x)}$ for $\sigma=s,u$.
		\item The stable subspace $E^{s}_x$ is uniformly contracting and the unstable subspace $E^{u}_x$ is uniformly expanding: there are constants $C\geq 1$ and $0<\lambda<1$ such that for every $n\in \mathbb{N}$ and $v^{s,u}\in E^{s,u}_x$, we have $$\|D_{x}f^{n}(v^{s})\|\leq C\lambda^{n}\|v^{s}\|\quad \text{and} \quad \|D_{x}f^{-n}(v^{u})\|\leq C\lambda^{n}\|v^{u}\|.$$
	\end{enumerate}
	
	Recall that a hyperbolic set $\Lambda$ is {\it locally maximal} if there exists an open neighborhood $U$ of $\Lambda$ such that $\Lambda=\bigcap_{n\in\mathbb{Z}}f^{n}(U)$. $f$ is called {\it topologically transitive} on $\Lambda$, if for any two nonempty (relative) open subsets $U,V\subset\Lambda$ there exists $n>0$ such that $f^n(U)\cap V\neq\varnothing$.
	Assume that $\Lambda$ is a locally maximal hyperbolic set and $f$ is topologically transitive on $\Lambda$. 
	A finite cover $\mathcal{P}=\{P_{1},P_{2},...,P_{l}\}$ of $\Lambda$  is called a {\it Markov partition} of $\Lambda$ with respect to $f$ if
	\begin{enumerate}
		\item $\varnothing \neq P_{i} \subset \Lambda$ and $P_{i}=\overline{\text{int } P_{i}}$ for each $i=1,2,...,l$;
		\item ($\text{int } P_{i}) \cap (\text{int } P_{j} )=\varnothing$ whenever $i\neq j$;
		\item for every $x, y \in P_i$, 
		\[ 
		[x, y]: =W_{\text{loc}}^s(x,f) \cap W_{\text{loc}}^u(y,f)
		\] 
		exists and is contained in $P_i$	, $i=1,2,\cdots,l$;
		\item for each $x\in (\text{int } P_{i}) \cap f^{-1} (\text{int } P_{j})$, we have
		$$f(W^{s}_{\text{loc}}(x, f)\cap P_{i})\subset W^{s}_{\text{loc}}(f(x), f)\cap P_{j},$$
		$$f(W^{u}_{\text{loc}}(x, f)\cap P_{i})\supset W^{u}_{\text{loc}}(f(x), f)\cap P_{j}.$$
	\end{enumerate}
	Here the topology used for the interior is the induced topology on $\Lambda$.
	

	Let $\mathcal{P}=\{P_{1},P_{2},...,P_{l}\}$ be a Markov partition of $\Lambda$ with diameter as small as desired (we refer to \cite{KH} for details and references). Denote $\mathcal{A}=\{1,2,...,l\}$. We equip the space of sequences $\Sigma_{l}=\mathcal{A}^{\mathbb{Z}}$ with the distance
	\begin{align*}
		d(w,w^{\prime})\triangleq \sum_{j\in\mathbb{Z}}e^{-|j|}|i_{j}-i^{\prime}_{j}|,
	\end{align*}
	where $w=(\cdots i_{-1}i_{0}i_{1}\cdots)$ and $w^{\prime}=(\cdots i^{\prime}_{-1}i^{\prime}_{0}i^{\prime}_{1}\cdots)$. With this distance, $\Sigma_l$ becomes a compact metric space. We also consider the shift map $\sigma: \Sigma_{l}\rightarrow \Sigma_{l}$ defined by $(\sigma(w))_{j}=i_{j+1}$ for each $w=(\cdots i_{-1}i_{0}i_{1}\cdots)\in \Sigma_{l}$ and $j\in\mathbb{Z}$. The restriction of $\sigma$ to the set
	\begin{align*}
		\Sigma_{A}\triangleq\{(\cdots i_{-1}i_{0}i_{1}\cdots)\in \Sigma_{l}:a_{i_{j}i_{j+1}}=1 \text{~for all~} j\in\mathbb{Z}\}
	\end{align*}
	is the two-sided {\it topological Markov chain} with the transition matrix $A=(a_{ij})$ of the Markov partition (i.e. $a_{ij}=1$ if $(\text{int } P_{i})\cap f^{-1}(\text{int } P_{j})\neq \varnothing$ and $a_{ij}=0$ otherwise). It is easy to show that one can define a coding map $\pi:\Sigma_{A}\rightarrow \Lambda$ of the hyperbolic set $\Lambda$ by
	\begin{align*}
		\pi(\cdots i_{-n}\cdots i_{0}\cdots i_{n}\cdots)=\bigcap_{n\in \mathbb{Z}}f^{-n}(P_{i_{n}}).
	\end{align*}
	The map $\pi$ is surjective and satisfies $\pi\circ\sigma=f\circ\pi$.
	
	Let $X\subset \Sigma_l$ be a non-empty compact set satisfying $\sigma X=X$. We call $(X,\sigma)$ a {\it two-sided subshift over $\mathcal{A}$}.

	A sequence $\mathbf{i}=(\cdots i_{-m}\cdots i_{0}\cdots i_{n})$, where $1\leq i_{j}\leq l$, is called {\it admissible} if for $j=...,-m,...,0,...,n-1$, it holds that $(\text{int } P_{i_j}) \cap f^{-1}(\text{int } P_{i_{j+1}})\neq \varnothing$. Given an admissible sequence $\mathbf{i}=(\cdots i_{-m}\cdots i_{0}\cdots i_{n})$, one can define the cylinder
	$$P_{\cdots i_{-m}\cdots i_{0}\cdots i_{n}}=\bigcap_{j=-\infty}^{n}f^{-j}(P_{i_{j}}).$$
	\subsection{Partially hyperbolic diffeomorphism} Let $M$ be a compact Riemannian manifold and $f:M\rightarrow M$ be a $C^1$ diffeomorphism. $f$ is said to be a {\it partially hyperbolic diffeomorphism} if there exists a nontrivial $Df-$invariant splitting $T_xM=E^s_x\oplus E^c_x \oplus E^u_x$ of the tangent bundle into stable, center and unstable distributions such that for all $x\in M$ and $v^\sigma$($\sigma=c,s,u$), it holds that 
	$$\|D_xf(v^s)\|<\|D_xf(v^c)\|<\|D_xf(v^u)\|$$
	and
	$$\|D_xf|_{E_x^s}\|<1\\\text{\quad and\quad } \|D_xf^{-1}|_{E^u_x}\|<1$$
	for some suitable Riemannian metric on $M$. The stable distribution $E^s_x$ and unstable distribution $E^u_x$ are integrable to the stable and unstable foliations $W^s(x)$ and $W^u(x)$ respectively such that $TW^s(x)=E^s_x$ and $TW^u(x)=E^u_x$.
	\subsection{Box dimension}
	Let $X$ be a compact metric space. {\em The lower and upper box dimensions of the set $Z\subset X$} are defined respectively by
	\[
	\underline{\dim}_B Z=\liminf_{\varepsilon\to 0}\frac{\log N(Z, \varepsilon)}{\log \frac{1}{\varepsilon}} \text{ \quad and \quad} \overline{\dim}_B Z=\limsup_{\varepsilon\to 0}\frac{\log N(Z, \varepsilon)}{\log \frac{1}{\varepsilon}},
	\]
	where $N(Z, \varepsilon)$ denotes the least number of balls of radius $\varepsilon$ those are needed to
	cover the set $Z$. For more introduction, one can refer to \cite{Pes}.

	\subsection{Topological pressure}\label{Topological pressure}
	We recall the definition of the topological pressure. For a detailed introduction, one can refer to \cite{Pes,WALTERS,CFH}.
	Let $X$ be a compact metric space equipped with the metric $d$ and $f:X\rightarrow X$ be a continuous map. Given $n\in\mathbb{N}$, define the metric $d_{n}$ on $X$ as
	\begin{align*}
		d_{n}(x,y)=\max\{d(f^{k}(x),f^{k}(y)):0\leq k<n\}.
	\end{align*}
	Given $r>0$, denote by $B_{n}(x,r)=\{y:d_{n}(x,y)<r\}$ the {\it Bowen balls}. We say that a set $E\subset X$ is $(n,r)${\it -separated} for $X$ if $d_n(x,y)>r$ for any two distinct points $x,y\in E$.
	A sequence of continuous potential functions $\Phi=\{\varphi_n\}_{n\geq1}$ is called {\it sub-additive}, if
	\begin{equation*}
		\varphi_{m+n}\leq\varphi_n+\varphi_m\circ f^n, \text{ for all } m,n\geq1.
	\end{equation*}

	Let $\Phi=\{\varphi_n\}_{n\geq1}$ be a sub-additive sequence of continuous potentials on $X$, set
	\begin{equation*}
		P_n(\Phi, \varepsilon)=\sup\{\sum_{x\in E}e^{\varphi_n(x)}: E \text{ is an } (n,\varepsilon) \text{-separated subset of } X\}.
	\end{equation*}
	The quantity
	\begin{equation*}
		P(f,\Phi)=\lim_{\varepsilon\to0}\limsup_{n\to\infty}\frac1n\log P_n(\Phi,\varepsilon)
	\end{equation*}
	is called the {\it sub-additive topological pressure} of $\Phi$.
	
	The sub-additive topological pressure satisfies the following  variational principle, see \cite{CFH}.
	\begin{Thm}\label{variational principle}
		Let $\Phi=\{\varphi_n\}_{n\geq1}$ be a sub-additive sequence of continuous potentials on a compact dynamical system $(X,f)$. Then
		\begin{equation*}
			P(f,\Phi)=\sup\{h_{\mu}(f)+\Phi_*(\mu)| ~\mu\in\mathcal{M}_{inv}(f), \Phi_*(\mu)\neq-\infty\},
		\end{equation*}
		where $\Phi_*(\mu)=\lim_{n\to\infty}\frac1n\int\varphi_n d\mu$ (the equality is due to the standard sub-additive argument) and $\mathcal{M}_{inv}(f)$ is the set of all $f$-invariant Borel probability measures on X.
	\end{Thm}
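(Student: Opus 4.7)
The plan is to prove the variational principle by establishing the two inequalities separately. For \emph{direction (I)}, show $h_\mu(f)+\Phi_*(\mu)\le P(f,\Phi)$ for each $\mu\in\mathcal{M}_{inv}(f)$ with $\Phi_*(\mu)>-\infty$; for \emph{direction (II)}, show $P(f,\Phi)\le\sup_\mu\{h_\mu(f)+\Phi_*(\mu)\}$ via a Misiurewicz-type construction of an invariant measure from near-optimal separated sets, together with a block-averaging trick tailored to the sub-additive potential.

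For (I), fix $\vep>0$, $\mu\in\mathcal{M}_{inv}(f)$, and a maximal $(n,\vep)$-separated set $E$; by maximality the Bowen balls $B_n(x,\vep)$ with $x\in E$ cover $X$. Construct a finite Borel partition $\eta$ with $|\eta|=|E|$ whose cells have $d_n$-diameter at most $2\vep$ and each contain exactly one element $x_A\in E$. Choosing $x_A$ to maximize $\varphi_n$ on its cell (possible by continuity of $\varphi_n$), the Gibbs inequality applied to the probability vector $(\mu(A))_{A\in\eta}$ and the weights $(e^{\varphi_n(x_A)})$ gives
\[
H_\mu(\eta)+\int\varphi_n\,d\mu\ \le\ H_\mu(\eta)+\sum_{A\in\eta}\mu(A)\varphi_n(x_A)\ \le\ \log\sum_{x\in E}e^{\varphi_n(x)}\ \le\ \log P_n(\Phi,\vep).
\]
Comparing $\eta$ with $\xi_n:=\bigvee_{i=0}^{n-1}f^{-i}\xi$ for an auxiliary Borel partition $\xi$ of diameter $<\vep$, dividing by $n$ and letting $n\to\infty$ then $\vep\to 0$ yields $h_\mu(f,\xi)+\Phi_*(\mu)\le P(f,\Phi)$; taking the supremum over $\xi$ completes (I).

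For (II), choose for each $n$ an $(n,\vep)$-separated set $E_n$ with $Z_n:=\sum_{x\in E_n}e^{\varphi_n(x)}\ge \tfrac12 P_n(\Phi,\vep)$, and set
\[
\sigma_n=\frac{1}{Z_n}\sum_{x\in E_n}e^{\varphi_n(x)}\delta_x,\qquad \mu_n=\frac{1}{n}\sum_{k=0}^{n-1}f^k_{*}\sigma_n.
\]
Pass to a subsequence along which $\mu_n\to\mu$ weakly; a standard tail argument shows $\mu$ is $f$-invariant. Choose a partition $\xi$ of diameter less than the separation scale, so each cell of $\xi_n$ meets $E_n$ in at most one point; then the Gibbs inequality is an equality: $\log Z_n=H_{\sigma_n}(\xi_n)+\int\varphi_n\,d\sigma_n$. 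A block-decomposition of $\xi_n$ into windows of length $q$, plus concavity of entropy and invariance of $\mu_n$, yields $\tfrac{1}{n}H_{\sigma_n}(\xi_n)\le \tfrac{1}{q}H_{\mu_n}(\xi_q)+O(q/n)$; passing to the subsequential limit in $n$, then letting $q\to\infty$, produces the entropy term $h_\mu(f,\xi)\le h_\mu(f)$.

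\textbf{The main obstacle} is the potential term $\tfrac{1}{n}\int\varphi_n\,d\sigma_n$: unlike the additive setting, where this equals $\int\varphi\,d\mu_n$ and converges to $\int\varphi\,d\mu$ by weak-$*$ convergence, in the sub-additive case no such identity is available. To bridge this I would apply the Cao--Feng--Huang averaging trick: fix $q\in\mathbb{N}$ and, for each $0\le s<q$, iterate sub-additivity to write $\varphi_n\le \varphi_s+\sum_{j=0}^{a_s-1}\varphi_q\circ f^{s+jq}+\varphi_{r_s}\circ f^{a_s q+s}$ where $n=s+a_sq+r_s$; summing over $s$ one obtains
\[
q\,\varphi_n\ \le\ \sum_{k=0}^{n-q}\varphi_q\circ f^k+R_{q,n},\qquad \|R_{q,n}\|_\infty\le C(q).
\]
Integrating against $\sigma_n$, dividing by $nq$, and using weak-$*$ convergence $\mu_n\to\mu$ against the continuous function $\varphi_q$, yields $\limsup_n \tfrac{1}{n}\int\varphi_n\,d\sigma_n\le \tfrac{1}{q}\int\varphi_q\,d\mu$. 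Sending $q\to\infty$ and invoking the sub-additive ergodic identity $\Phi_*(\mu)=\inf_q\tfrac{1}{q}\int\varphi_q\,d\mu$ delivers the bound $\Phi_*(\mu)$ on the liminf side, and combining with the entropy estimate and then $\vep\to 0$ closes (II).
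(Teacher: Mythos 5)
The paper states this theorem as a quoted result from \cite{CFH} and does not reproduce a proof, so there is no in-paper argument to diverge from; your sketch is a faithful reconstruction of the Cao--Feng--Huang proof, and in particular the Misiurewicz construction $\sigma_n\mapsto\mu_n$ combined with the block-averaging bound $q\,\varphi_n\le\sum_{k=0}^{n-q}\varphi_q\circ f^k+R_{q,n}$ and the identity $\Phi_*(\mu)=\inf_q\frac1q\int\varphi_q\,d\mu$ is exactly their device for circumventing the failure, in the sub-additive setting, of the identity $\frac1n\int\varphi_n\,d\sigma_n=\int\varphi\,d\mu_n$. This is essentially the same approach as the one the paper cites.
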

	
	\begin{Rem}
		If $\Phi=\{\varphi_n\}_{n\geq1}$ is \emph{additive} in the sense that $\varphi_n(x)=\varphi(x)+\varphi(f(x))+\cdots+\varphi(f^{n-1}(x))\triangleq S_n\varphi(x)$ for some continuous function $\varphi: X\to \mathbb{R}$, we simply denote the topological pressure $P(f, \Phi)$ as $P(f, \varphi)$.
	\end{Rem}

	Next we recall the definition of the singular-valued potentials for hyperbolic diffeomorphisms.
	
	Suppose $M$ is a compact Riemannian manifold and $f:M\rightarrow M$ is a $C^{1}$ diffeomorphism admitting a hyperbolic set $\Lambda$ such that $f|_{\Lambda}$ is topologically transitive. Given $x\in \Lambda$ and $n\geq 1$, we consider the differentiable operator $D_{x}f^{n}|_{E^u_x}: E^u_x\rightarrow E^u_{f^{n}(x)}$ and denote its singular values in the decreasing order by
	$$\alpha_{1}(x,f^{n})\geq\alpha_{2}(x,f^{n})\geq\cdots\geq \alpha_{u}(x,f^{n}),$$ where $u$ is the dimension of the unstable manifold.
	For $s\in [0,u]$, set
	$$\varphi^{s}(x,f^{n})\triangleq\sum^{u}_{i=u-[s]+1} \log \alpha_{i}(x,f^{n})+(s-[s])\log\alpha_{u-[s]}(x,f^{n}).$$
	
	One can show that $\Phi_{f}(s)\triangleq \{-\varphi^{s}(\cdot,f^{n})\}_{n\geq 1}$ are sub-additive. We call them the {\it sub-additive singular-valued potentials}.
	\subsection{Unstable topological pressure}
	The notion of unstable topological pressure is given by Hu, Wu and Zhu in \cite{HWZh} and generalized to sub-additive ones by Zhang, Li and Zhou in \cite{ZhLZh}. Given a partially hyperbolic diffeomorphism $f$ and a sequence of sub-additive potentials $\mathcal{\varPhi}=\{\varphi_n\}_{n\geq 1}$. Denote by $d^u$ the metric induced by the Riemannian structure on the unstable manifold and let $d^u_n(x,y)=\max_{0\leq j\leq n-1}d^u(f^j(x),f^j(y))$. Let $W^u(x,\delta)$ denote the open ball inside $W^u(x)$ with center $x$ and radius $\delta$ with respect to $d^u$. Let $E$ be the set of points in $\overline{W^u(x,\delta)}$ with pairwise $d^u_n-$distances at least $\varepsilon$. The set $E$ is called $(n,\varepsilon)$ $u$-{\it separated subset} of $\overline{W^u(x,\delta)}$. Set
	\begin{equation*}
		P^u_n(f,\mathcal{\varPhi},\varepsilon,x,\delta)=\sup\{\sum_{y\in E}e^{\varphi_n(y)}|E\ \text{is}\ \text{an} \ (n,\varepsilon)\ u-\text{separated subset of }\overline{W^u(x,\delta)}\}
	\end{equation*}
	and
	\begin{equation*}
		P^u(f,\varPhi,\varepsilon,x,\delta)=\limsup_{n\rightarrow \infty}\frac{1}{n}\log P^u_n(f,\varPhi,\varepsilon,x,\delta).
	\end{equation*}
	Set
	\begin{equation*}
		P^u(f,\varPhi,x,\delta)=\lim_{\varepsilon\rightarrow 0}P^u(f,\varPhi,\varepsilon,x,\delta).
	\end{equation*}
	The \emph{unstable topological pressure} of $f$ with respect to $\varPhi$ is defined as 
	\begin{equation*}
		P^u(f,\varPhi)=\lim_{\delta\rightarrow 0}\sup_{x\in M}P^u(f,\varPhi,x,\delta).
	\end{equation*} 
	If $\varphi_n$ is additive, that is $\varphi_n(x)=\varphi(x)+\varphi(f(x))+\cdots+\varphi(f^{n-1}(x))$ for some continuous potential $\varphi$, then we denote $	P^u(f,\varPhi)$ as $P^u(f,\varphi)$.
	\section{Proof of Theorem A}\label{proof}
	\subsection{Topological pressure and unstable pressure}
	First, we investigate the relation of sub-additive topological pressure and unstable pressure. 
	The main result in this subsection is the following theorem. In this subsection, we assume that $M$ is a compact Riemannian manifold.
	\begin{Thm}\label{equal}
		Suppose $f:M \rightarrow M$ is a $C^{1}$ diffeomorphism, $\Lambda$ is a hyperbolic set for $f$ and $\Phi_f(s)$ are the sub-additive singular-valued potentials. Then it holds that
		$$P^{u}(f|_{\Lambda},\Phi_f(s))=P(f|_{\Lambda},\Phi_f(s)).$$
	\end{Thm}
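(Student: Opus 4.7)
The plan is to prove the two matching inequalities $P^u(f|_\Lambda,\Phi_f(s)) \leq P(f|_\Lambda,\Phi_f(s))$ and its reverse. The first inequality is routine: since the intrinsic unstable metric $d^u$ dominates the ambient metric $d$, any $(n,\varepsilon)$ $u$-separated subset of $\overline{W^u(x,\delta)}\cap\Lambda$ is in particular $(n,\varepsilon)$-separated in $(\Lambda,d)$. Summing $e^{-\varphi^s(\cdot,f^n)}$ over such a set therefore gives $P^u_n \leq P_n$ at every scale, and the limits in $n$, $\varepsilon$, $\delta$ preserve this.

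For the reverse direction I would exploit the local product structure of $\Lambda$. Fix a Markov partition $\{P_1,\dots,P_l\}$ of sufficiently small diameter $\delta_0$, choose $z_i\in P_i$, and define the stable-holonomy projection $\pi^u_i\colon P_i\to W^u_{\mathrm{loc}}(z_i)\cap\Lambda$ by $\pi^u_i(x)=[x,z_i]$. Given a near-optimal $(n,\varepsilon)$-separated set $E\subset\Lambda$ for $P_n(\Phi_f(s),\varepsilon)$, split $E=\bigsqcup_{i=1}^l E_i$ with $E_i\subset P_i$ and set $E'_i=\pi^u_i(E_i)$. Two observations are key: (a) because the stable direction contracts under $f^k$ for $k\geq 0$, one has $d_n(x_1,x_2)\leq C\,d(x_1,x_2)$ whenever $x_1,x_2$ lie on a common stable plaque, so each fibre of $\pi^u_i|_{E_i}$ has cardinality at most a constant $M(\varepsilon,\delta_0)$ independent of $n$; (b) uniform continuity of the stable holonomy on $\Lambda$ forces $E'_i$ to be $(n,\varepsilon/C')$ $u$-separated in $\overline{W^u(z_i,\delta_0)}\cap\Lambda$ for some $C'>0$ independent of $n$.

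The heart of the argument is a tempered distortion estimate for the singular-value potentials along stable leaves: for every $x\in\Lambda$ and $y\in W^s_{\mathrm{loc}}(x)\cap\Lambda$,
\[
|\varphi^s(x,f^n)-\varphi^s(y,f^n)| = o(n) \quad \text{uniformly in } x,y.
\]
Writing $\varphi^s$ as a Birkhoff-type sum of $\log$-singular values of $Df|_{E^u}$ (continuous in $x$ on $\Lambda$ with some modulus $\omega$), and using $d(f^k x,f^k y)\leq C\lambda^k$, one obtains the estimate
\[
\sum_{k=0}^{n-1}\omega(C\lambda^k)\leq O\!\bigl(\log \tfrac{1}{\eta}\bigr) + n\,\omega(\eta)
\]
for any $\eta>0$; dividing by $n$ and sending $n\to\infty$ then $\eta\to 0$ yields the $o(n)$ bound. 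This is the main technical obstacle in $C^1$ regularity (as opposed to $C^{1+\alpha}$): without Hölder control the distortion cannot be absorbed into a bounded constant, only into a sub-linear error, but this is still enough after dividing by $n$ and taking logs.

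Combining these ingredients,
\[
\sum_{x\in E_i}e^{-\varphi^s(x,f^n)}\leq M\,e^{o(n)}\sum_{y\in E'_i}e^{-\varphi^s(y,f^n)}\leq M\,e^{o(n)}\,P^u_n(f,\Phi_f(s),\varepsilon/C',z_i,\delta_0).
\]
Summing over $i\in\{1,\dots,l\}$, taking $\tfrac{1}{n}\log\limsup_{n\to\infty}$, and finally letting $\varepsilon\to 0$ and $\delta_0\to 0$, the multiplicity and distortion contributions both vanish and one obtains $P\leq P^u$, completing the proof.
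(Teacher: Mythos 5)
Your route is genuinely different from the paper's: the paper proves the equality first for each fixed $n$ at the level of the \emph{additive} potentials $-\frac{1}{n}\varphi^s(\cdot,f^n)$ (Theorem \ref{unstablepressure}) by perturbing the $C^1$ diffeomorphism to a nearby $C^{1+\alpha}$ one, invoking structural stability and the known $C^{1+\alpha}$ identity $P^u=P$ from \cite{HWZh}, and controlling the perturbation error by uniform continuity of $\varphi^s(\cdot,g^n)$; it then passes to the sub-additive limit via the additive-to-subadditive approximation results (Propositions \ref{topo pressure appro} and \ref{unstable pressure appro}). You instead try to attack the sub-additive pressure directly, projecting a separated set in $\Lambda$ onto unstable plaques by stable holonomy and absorbing the error with a distortion estimate.

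There is a genuine gap in the distortion step. Your claimed tempered distortion estimate $|\varphi^s(x,f^n)-\varphi^s(y,f^n)|=o(n)$ for $x,y$ on a common stable plaque is justified by treating $\varphi^s$ as a ``Birkhoff-type sum'' and summing a modulus of continuity over $d(f^kx,f^ky)\leq C\lambda^k$. But $\varphi^s(\cdot,f^n)$ is not a Birkhoff sum of any fixed continuous function: it is built from singular values of the \emph{composite} operator $D_xf^n|_{E^u_x}$, and singular values of products are not products of singular values. Writing $\varphi^k(x,f^n)=\log|\det(D_xf^n|_{E^u_x})|-\log\|\wedge^{u-k}(D_xf^n|_{E^u_x})\|$, the determinant term is indeed additive and your modulus argument controls it; but $\log\|\wedge^{u-k}(\cdot)\|$ is only sub-additive, so the telescoping you implicitly use fails, and the gap between $\|\wedge^{u-k}(A_{n-1}\cdots A_0)\|$ and $\prod_j\|\wedge^{u-k}(A_j)\|$ can itself grow linearly in $n$. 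Closing exactly this gap is the content of Proposition \ref{topo pressure appro}, which requires the variational principle and is not available pointwise along a single pair of stable-equivalent orbits. In $C^{1+\alpha}$ regularity one would have bounded (not merely sub-linear) distortion and the estimate could be salvaged; in the $C^1$ setting it is unsupported, and this is precisely why the paper routes the argument through structural stability and fixed-$n$ additive pressures rather than a direct holonomy-projection argument.
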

	The result in \cite{HWZh} shows that
	\begin{Prop}
		Let $f:M\rightarrow M$ be a $C^1$ partially hyperbolic diffeomorphism and $\varphi:M\rightarrow \mathbb{R}$ be a continuous potential. Then
		$$P^u(f,\varphi)\leq P(f,\varphi).$$
		If $f$ is $C^{1+\alpha}$ and there is no positive Lyapunov exponent in the center direction at $\nu-$a.e. with respect to any ergodic measure $\nu$, then the equation holds.
	\end{Prop}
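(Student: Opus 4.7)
The proposition splits into the unconditional inequality $P^u(f,\vap)\le P(f,\vap)$ and the equality under the $C^{1+\alpha}$ and Lyapunov hypotheses. The most transparent route to the inequality is through variational principles. Walters' theorem (the additive specialisation of Theorem \ref{variational principle}) gives $P(f,\vap) = \sup_\mu\{h_\mu(f) + \int\vap\,d\mu\}$ over $f$-invariant Borel probability measures $\mu$, and \cite{HWZh} prove the corresponding identity $P^u(f,\vap) = \sup_\mu\{h^u_\mu(f) + \int\vap\,d\mu\}$, where $h^u_\mu = h_\mu(f,\xi^u)$ is the metric entropy computed on a measurable partition $\xi^u$ subordinate to the unstable foliation $W^u$. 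The inequality then reduces to the pointwise bound $h^u_\mu \le h_\mu$, which is immediate from the fact that $h^u_\mu$ is the entropy with respect to a specific partition and hence bounded above by the supremum over all partitions.

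\textbf{Equality.} Given the first part, the reverse direction $P \le P^u$ reduces to showing $h_\mu(f) = h^u_\mu(f)$ for every ergodic invariant $\mu$. This is where the $C^{1+\alpha}$ hypothesis enters, through the Ledrappier--Young entropy formula: $h_\mu(f) = \sum_{\la_i > 0} \la_i\,\gamma_i$, the sum of the positive Lyapunov exponents of $\mu$ weighted by the corresponding partial dimensions along the Pesin subspaces. Analogously, $h^u_\mu$ is exactly the contribution to this sum coming from directions in $E^u$, obtained by restricting the Ledrappier--Young analysis to partitions subordinate to $W^u$. The Lyapunov hypothesis rules out any positive exponent in $E^c$ for ergodic $\mu$, so every positive exponent lies in $E^u$ and the two sums agree. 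Taking suprema over $\mu$ yields $P(f,\vap) \le P^u(f,\vap)$.

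\textbf{Alternative direct approach and main obstacle.} An alternative proof of $P^u \le P$, bypassing the variational principle, compares separated sets directly. Given an $(n,\vep)$ $u$-separated $E \subset \overline{W^u(x,\de)}$ and a maximal $(n,\vep/2)$-separated $F \subset M$ in the ambient metric, partition $E = \bigsqcup_{p\in F} E_p$ with $E_p \subseteq B_n(p,\vep/2)$. The key combinatorial input is a uniform bound $|E_p| \le N_0(\vep,\de)$ independent of $n$: points of $E_p$ lie on the same unstable leaf, are close in the ambient metric $d_n$, yet far in the leaf metric $d^u_n$, so they correspond to distinct ``sheets'' of the unstable disk crossing the Bowen ball; uniform continuity of the bundle $E^u$ and its transversality to $E^{cs}$ bound the sheet count. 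The oscillation of $\vap_n$ on each Bowen ball is absorbed by sending $\vep \to 0$ and invoking uniform continuity of $\vap$. The main obstacle in the full argument is the Ledrappier--Young step in the equality direction: this invokes Pesin theory and genuinely requires $C^{1+\alpha}$ regularity, which is precisely why the equality is not expected in the $C^1$ setting.
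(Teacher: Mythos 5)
The paper does not actually prove this proposition: it is stated as a direct quotation of a result from \cite{HWZh}, with no argument supplied. Your reconstruction via the two variational principles (Walters for $P$, Hu--Wu--Zhu for $P^u$) is the natural route and is indeed the line of argument taken in the cited paper, and the Ledrappier--Young step for the equality case under the $C^{1+\alpha}$ and center-Lyapunov hypotheses is also the right ingredient. One imprecision worth correcting: your reason for $h^u_\mu \le h_\mu$ is not quite right as stated. The unstable metric entropy $h^u_\mu$ is not the Kolmogorov--Sinai entropy $h_\mu(f,\alpha)$ of $f$ with respect to some particular finite partition $\alpha$, so the bound does not come from ``being less than the sup over all partitions.'' Rather, $h^u_\mu$ is defined (in \cite{HHW}, \cite{HWZh}) as a conditional entropy, $h^u_\mu(f)=\sup_\alpha \lim_n \frac1n H_\mu\bigl(\bigvee_{i=0}^{n-1}f^{-i}\alpha \mid \eta\bigr)$ for a measurable partition $\eta$ subordinate to $W^u$; the inequality $h^u_\mu\le h_\mu$ holds because conditioning decreases entropy, $H(\cdot\mid\eta)\le H(\cdot)$, term by term before taking the limit and supremum. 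The conclusion and the rest of your outline are fine; your ``alternative direct approach'' is, as you acknowledge, only a sketch and its sheet-counting bound is not obviously uniform in $n$ without more geometric work, but it is not needed given the variational argument.
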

	By this proposition, for the hyperbolic set $\Lambda$ for a $C^{1+\alpha}$ hyperbolic diffeomorphism $f$ and a continuous potential $\varphi$, it holds that $$P^{u}(f|_{\Lambda},\varphi)=P(f|_{\Lambda},\varphi).$$
	
	It is natural to ask whether this holds for hyperbolic sets for $C^1$  diffeomorphisms and continuous potentials. In our work, we show that it holds for singular-valued potential.

	\begin{Thm}\label{unstablepressure}
		Suppose $g:M\rightarrow M$ is a $C^{1}$ diffeomorphism, $\Lambda_{g}$ is a hyperbolic set for $g$. Denote by $u$ the dimension of the unstable manifold. Then for every $n\in\mathbb{N}$ and $s\in[0,u]$, we have 
		$$P(g|_{\Lambda_{g}},-\frac{1}{n}\varphi^{s}(\cdot,g^{n}))=P^{u}(g|_{\Lambda_{g}},-\frac{1}{n}\varphi^{s}(\cdot,g^{n})).$$
	\end{Thm}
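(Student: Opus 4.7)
\emph{Plan.} The Hu--Wu--Zhu proposition quoted just above already gives
$P^u(g|_{\Lambda_g},-\tfrac{1}{n}\varphi^s(\cdot,g^n))\le P(g|_{\Lambda_g},-\tfrac{1}{n}\varphi^s(\cdot,g^n))$,
so the substance of Theorem \ref{unstablepressure} is the reverse inequality. I plan to prove
$P(g|_{\Lambda_g},\phi)\le P^u(g|_{\Lambda_g},\phi)$ for \emph{every} continuous $\phi\colon\Lambda_g\to\mathbb{R}$, of which $\phi(x)=-\tfrac1n\varphi^s(x,g^n)$ is a special case since $E^u$ varies continuously on a hyperbolic set. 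The mechanism is a Markov partition combined with the stable holonomy: project every $(N,\varepsilon)$-separated subset of $\Lambda_g$ onto local unstable leaves while losing at most an $e^{\eta N}$-factor in the Bowen sum.

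Fix $\varepsilon_0>0$ small and a Markov partition $\{P_1,\dots,P_l\}$ with $\operatorname{diam}P_i<\varepsilon_0$. For each $i$ pick $x_i\in P_i$ and set $V_i=W^u_{\text{loc}}(x_i)\cap P_i\subset\overline{W^u(x_i,\delta_i)}$ with $\delta_i=O(\varepsilon_0)$. Local product structure supplies the stable holonomy $\pi_i\colon P_i\to V_i$, $\pi_i(y):=[y,x_i]=W^u_{\text{loc}}(x_i)\cap W^s_{\text{loc}}(y)$, whose image stays in $\Lambda_g$ by the Markov property. Given an $(N,\varepsilon)$-separated $E\subset\Lambda_g$, split $E=\bigsqcup_i E_i$ with $E_i=E\cap P_i$. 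With $C\ge 1,\ 0<\lambda<1$ the hyperbolicity constants, $\pi_iy\in W^s_{\text{loc}}(y)$ yields the stable contraction estimate $d(g^ky,g^k\pi_iy)\le C\lambda^k\operatorname{diam}P_i$ for all $k\ge 0$, which drives the remaining two steps.

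The first key step is to show, for $\varepsilon_0$ small enough in terms of $\varepsilon$, that $\pi_i|_{E_i}$ is injective and $\pi_i(E_i)$ is $(N,\varepsilon/2)$ $u$-separated in $V_i$. Injectivity follows because two distinct preimages of the same image lie on a common stable leaf, giving $d_N(y,z)\le C\operatorname{diam}P_i<\varepsilon$ and contradicting separation; $u$-separation comes from the triangle inequality at the index $k_0<N$ realising $d_N(y,z)>\varepsilon$, using the stable estimate to absorb the two projection errors at level $k_0$. The second key step is the Birkhoff comparison: uniform continuity of $\phi$ with modulus $\omega$ combined with the same stable estimate yields
$$
|S_N\phi(y)-S_N\phi(\pi_iy)|\le\sum_{k=0}^{N-1}\omega(C\lambda^k\varepsilon_0),
$$
and since $\omega(C\lambda^k\varepsilon_0)\to 0$ a Cesaro mean argument shows $\tfrac1N|S_N\phi(y)-S_N\phi(\pi_iy)|\to 0$ uniformly in $y$, so for any $\eta>0$ and all $N\ge N_0(\eta,\varepsilon_0)$ the difference is at most $\eta N$.

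Combining the two steps,
$$
\sum_{y\in E}e^{S_N\phi(y)}\le e^{\eta N}\sum_{i=1}^l\sum_{y'\in\pi_i(E_i)}e^{S_N\phi(y')}\le l\,e^{\eta N}\max_iP^u_N(g,\phi,\varepsilon/2,x_i,\delta_i).
$$
Taking $\sup_E$, $\tfrac1N\log$, $\limsup_{N\to\infty}$, and then $\varepsilon\to 0$ (forcing $\varepsilon_0\to 0$ and $\delta_i\to 0$) yields $P(g|_{\Lambda_g},\phi)\le\eta+P^u(g|_{\Lambda_g},\phi)$; letting $\eta\downarrow 0$ and combining with the Hu--Wu--Zhu direction finishes the proof. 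The most delicate point I foresee is arranging the quantifier chain $\varepsilon_0\ll\varepsilon$, $N_0=N_0(\eta,\varepsilon_0)$, $\delta_i=O(\varepsilon_0)$ so that the injectivity/$u$-separation claim, the Cesaro correction, and the final limits in $\varepsilon$ and $\delta$ all cooperate; the underlying geometric ingredients (local product structure, Lipschitz stable holonomy, uniform continuity of $\phi$) are classical.
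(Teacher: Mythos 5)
You take a genuinely different and more direct route than the paper. The paper proves Theorem \ref{unstablepressure} by structural stability: it approximates the $C^1$ map $g$ by a nearby $C^{1+\alpha}$ diffeomorphism $f$, uses the conjugacy $h\colon\Lambda_f\to\Lambda_g$ from Theorem \ref{structure stability}, invokes the Hu--Wu--Zhu identity $P=P^u$ on the $C^{1+\alpha}$ system, and transfers the identity back to $g$ through a chain of sup-norm estimates (equations (\ref{pressure relation1})--(\ref{pressure relation3})) exploiting the uniform continuity of the singular-value potential in both the point and the map. Your argument instead proves the hard inequality $P(g|_{\Lambda_g},\phi)\le P^u(g|_{\Lambda_g},\phi)$ directly for any continuous potential $\phi$: project $(N,\varepsilon)$-separated sets onto local unstable leaves via stable holonomy and control the Birkhoff-sum error by a Cesaro estimate. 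This is essentially the classical Markov-partition mechanism behind Bowen's pressure theory for Anosov systems, and it avoids both structural stability and the $C^{1+\alpha}$ Hu--Wu--Zhu ingredient. The quantifier chain you flag does check out: fix $\eta$, take $\limsup_N$ for fixed $\varepsilon$ (with $\varepsilon_0\asymp\varepsilon$ and $\delta_i=O(\varepsilon_0)$), then let $\varepsilon\to 0$ (which sends $\delta_i\to 0$ and uses monotonicity of $P^u(\cdots,\varepsilon',x,\delta)$ in both $\varepsilon'$ and $\delta$), then $\eta\to 0$.

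Two caveats. First, your holonomy step needs $[y,x_i]\in\Lambda_g$, i.e.\ local product structure, which is equivalent to local maximality; Theorem \ref{unstablepressure} as stated only assumes $\Lambda_g$ is a hyperbolic set, and the paper's structural-stability route works at that generality. Since Theorem \ref{Main A} does assume local maximality this is harmless for the application, but your argument proves a slightly narrower statement than the one written. (You invoke a Markov partition, which strictly speaking requires transitivity, but you never use the Markov property --- any finite cover of $\Lambda_g$ by sets of small diameter suffices, so that is cosmetic.) Second, your proof gives the equality for all continuous potentials, whereas the paper explicitly treats only the singular-value potential and remarks that the general continuous case on $C^1$ hyperbolic sets is a question. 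Either the paper is overly cautious (my reading --- your argument is standard for locally maximal hyperbolic sets), or there is a definitional subtlety worth a sentence in the write-up, namely whether $P^u(g|_{\Lambda_g},\cdot)$ is taken over $u$-separated subsets of $\overline{W^u(x,\delta)}$ or of $\overline{W^u(x,\delta)}\cap\Lambda_g$; your construction produces the latter, and the comparison should be made explicit.
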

	
	We derive this via the structural stability of $C^{1}$ hyperbolic set. One can refer to Theorem $18.2.1$ in \cite{KH} for the structural stability. We state it as following:
	\begin{Thm}\label{structure stability}
		Given a compact Riemannian manifold $M$ and an open set $U\subset M$. Suppose $f:U\rightarrow f(U)$ is a $C^{1}$ diffeomorphism and $\Lambda\subset U$ is a hyperbolic set for $f$. Then for any open neighborhood $V\subset U$ of $\Lambda$ and $\varepsilon>0$, there exists $\delta>0$, such that for any diffeomorphism $g:U\rightarrow M$ with $d_{C^{1}}(f|_{V},g)<\delta$, there exists a hyperbolic set $\Lambda_{g}\subset V$ for $g$ and a homeomorphism $h:\Lambda_{g}\rightarrow \Lambda$ satisfying
		\begin{enumerate}	
			\item $d_{C^{0}}(Id,h)+d_{C^{0}}(Id,h^{-1})<\varepsilon$
			\item $h\circ g|_{\Lambda_{g}}=f|_{\Lambda}\circ h$ 
		\end{enumerate}
		and  $h$ is unique when $\varepsilon$ is small enough.
	\end{Thm}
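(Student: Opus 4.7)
The plan is to reduce the conjugacy problem to a Banach fixed-point problem on a space of continuous sections of $TM$ over $\Lambda$, with the contraction driven by the uniform hyperbolicity. I would first pass to an adapted (Mather) metric in which $\|Df|_{E^s}\|\leq\mu<1$ and $\|(Df)^{-1}|_{E^u}\|\leq\mu$, and extend the splitting $E^s\oplus E^u$ continuously from $\Lambda$ to a small open neighborhood $V'\subset V$ (this is possible since the splitting is continuous and $\Lambda$ is compact; the extension need not be invariant). Shrinking $V'$ if necessary and using exponential charts, every point of $V'$ is uniquely parameterized as $\exp_x(v)$ for $x\in\Lambda$ and $v\in T_xM$ of small norm.

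Next, I would encode the desired conjugacy $h$ as a continuous section $v\in C^0(\Lambda,TM)$ via $h(x)=\exp_x v(x)$, so that the conjugacy equation $g\circ h=h\circ f$ becomes the fixed-point equation
\begin{equation*}
v(f(x))=\exp_{f(x)}^{-1}\!\bigl(g(\exp_x v(x))\bigr).
\end{equation*}
Writing $g=f+(g-f)$ and expanding the exponential map to first order, the right-hand side equals $D_xf\,v(x)+R(x,v(x))$, where $R$ collects the perturbation $g-f$ and the higher-order exponential terms. Splitting $v=v^s+v^u$ according to the extended bundles and using the invariance $Df\,E^\sigma_x=E^\sigma_{f(x)}$ on $\Lambda$, I would rewrite the equation as
\begin{equation*}
v^s(x)=(D_xf|_{E^s})^{-1}\bigl(v^s(f(x))-R^s(x,v(x))\bigr),\qquad v^u(f(x))=D_xf|_{E^u}v^u(x)+R^u(x,v(x)),
\end{equation*}
i.e. a contraction in the past direction on the $E^u$-component and in the future direction on the $E^s$-component. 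By the adapted hyperbolicity, each operator has Lipschitz constant at most $\mu<1$ in the $C^0$ norm on a small ball of sections, provided $\|g-f\|_{C^1}<\delta$ is small enough to keep $R$ suitably small and Lipschitz-small in $v$. The Banach fixed-point theorem then delivers a unique continuous section $v$, and correspondingly a unique continuous map $h$ with $d_{C^0}(\id,h)<\varepsilon$; applying the symmetric argument with the roles of $f$ and $g$ interchanged (using the extended bundles pulled back to $\Lambda_g$) yields the inverse and the bound on $d_{C^0}(\id,h^{-1})$.

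Finally, I would set $\Lambda_g:=h^{-1}(\Lambda)$ (interpreted as the image of the inverse conjugacy, so that $\Lambda_g\subset V$) and verify it is hyperbolic for $g$. Hyperbolicity of $\Lambda_g$ follows from a cone-field argument: the splitting $E^s_x\oplus E^u_x$ on $\Lambda$ determines $Df$-invariant stable and unstable cones in a neighborhood; by continuity these cones are $Dg$-invariant with the same uniform expansion/contraction rates for $g$ sufficiently $C^1$-close to $f$, and the transversality is inherited by pull-back under $h$. Uniqueness of $h$ for small $\varepsilon$ is automatic from the Banach fixed-point construction once $\varepsilon$ is below the radius of the ball on which the contraction is established.

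The main obstacle I expect is organizing the contraction cleanly in the presence of the non-linearity of the exponential map: one needs the remainder term $R(x,v)$ to be Lipschitz in $v$ with constant $o(1)$ as $\delta\to 0$ and $\|v\|\to 0$, uniformly in $x\in\Lambda$. This requires shrinking both $\delta$ (the $C^1$ perturbation size) and $\varepsilon$ (the ball radius) in coordinated fashion, and choosing a neighborhood $V'$ on which the extended splitting and the adapted metric are compatible enough for the hyperbolic estimates to survive the nonlinear correction.
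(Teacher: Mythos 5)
The paper does not prove this theorem; it is quoted from Katok--Hasselblatt, Theorem 18.2.1 (strong structural stability of hyperbolic sets), and the argument there is the one you are sketching: encode a candidate conjugacy as a $C^0$ section of $TM$ over $\Lambda$ in exponential charts, split along an extension of $E^s\oplus E^u$, and solve $g\circ h=h\circ f$ by a contraction in the hyperbolic directions. Your outline is correct in spirit, but two issues deserve flagging.

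First, the displayed fixed-point equations have the contraction on the wrong side. Since $\|D_xf|_{E^s}\|\le\mu<1$ and $\|(D_xf|_{E^u})^{-1}\|\le\mu<1$, the contracting iteration pushes the stable component \emph{forward},
\begin{equation*}
v^s(y)=D_{f^{-1}y}f\,v^s(f^{-1}y)+R^s\bigl(f^{-1}y,v(f^{-1}y)\bigr),
\end{equation*}
and pulls the unstable component \emph{backward},
\begin{equation*}
v^u(x)=(D_xf|_{E^u})^{-1}\bigl(v^u(f(x))-R^u(x,v(x))\bigr),
\end{equation*}
each with Lipschitz constant at most $\mu+\Lip_v R<1$. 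As written, your stable equation invokes $(D_xf|_{E^s})^{-1}$, whose norm is $\ge\mu^{-1}>1$, so that operator is an expansion rather than a contraction, and the unstable line is left in equation form rather than fixed-point form. Your verbal description (``future'' contraction on $E^s$, ``past'' contraction on $E^u$) is the correct intuition; the formulas have the stable/unstable indices swapped.

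Second, the Banach fixed-point theorem produces a unique continuous $h$ close to the identity with $g\circ h=h\circ f$, but it does not by itself show that $h$ is injective, which is what makes $h$ a homeomorphism onto $\Lambda_g=h(\Lambda)$. The symmetric argument as you phrase it is circular at this stage: to run it with $f$ and $g$ exchanged you first need to know that $\Lambda_g$ is hyperbolic for $g$, and even then you must verify that the resulting map is precisely $h^{-1}$ rather than some other semiconjugacy. The standard resolution is expansivity of $f|_\Lambda$: if $h(x_1)=h(x_2)$, then $h(f^nx_1)=g^n h(x_1)=g^n h(x_2)=h(f^nx_2)$ for all $n\in\mathbb Z$, so $d(f^nx_1,f^nx_2)\le 2\,d_{C^0}(\id,h)<2\varepsilon$ for every $n$, and taking $\varepsilon$ smaller than half the expansivity constant forces $x_1=x_2$. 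A continuous bijection from the compact set $\Lambda$ onto $\Lambda_g$ is automatically a homeomorphism, and the estimate on $d_{C^0}(\id,h^{-1})$ then follows from the one on $d_{C^0}(\id,h)$. With these two repairs your proposal matches the Katok--Hasselblatt proof.
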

	\begin{proof}[Proof of Theorem \ref{unstablepressure}]
		For $y\in M$ and $n\in\mathbb{N}$, denote the singular values of $D_yg^{n}$ decreasingly by
		$$\alpha_{1}(y,g^{n})\geq\cdots\geq\alpha_{d}(y,g^{n}),$$
		where $d$ is the dimension of the manifold $M$.
		For every $s\in [0,u]$, denote
		$$\varphi^{s}(y, g^{n})\triangleq\sum_{i=u-[s]+1}^{u}\log \alpha_{i}(y,g^{n})+(s-[s])\log\alpha_{u-[s]}(y,g^{n}).$$
		For $\varepsilon_{1}>0$, there exists $\delta>0$, such that for any $x_{1},x_{2}\in\Lambda_{g}$ satisfying  $d(x_{1},x_{2})\leq\delta$, we have 	$$|\frac{1}{n}\varphi^{s}(x_{1},g^{n})-\frac{1}{n}\varphi^{s}(x_{2},g^{n})|\leq \frac{\varepsilon_{1}}{4}.$$
		By Theorem \ref{structure stability}, for $\delta>0$ above, there exists $\varepsilon>0$ such that for any diffeomorphism $f:M\rightarrow M$ with $d_{C^{1}}(f,g)<\varepsilon$ and $d_{C^{1}}(f^n,g^n)<\varepsilon$, there exists  a hyperbolic set $\Lambda_{f}$ for $f$ and a homeomorphism $h:\Lambda_{f}\rightarrow \Lambda_{g}$, such that $d_{C^{0}}(Id,h)+d_{C^{0}}(Id,h^{-1})<\delta$ and for $x\in \Lambda_f$, one has $$|\frac{1}{n}\varphi^{s}(x,g^{n})-\frac{1}{n}\varphi^{s}(x,f^{n})|\leq \frac{\varepsilon_{1}}{4}.$$
		Thus
		\begin{align*}
			&\max_{x\in\Lambda_{f}}|\frac{1}{n}\varphi^{s}(h(x),g^{n})-\frac{1}{n}\varphi^{s}(x,f^{n})|\\
			\leq & \max_{x\in\Lambda_{f}}(|\frac{1}{n}\varphi^{s}(h(x),g^{n})-\frac{1}{n}\varphi^{s}(x,g^{n})|+|\frac{1}{n}\varphi^{s}(x,g^{n})-\frac{1}{n}\varphi^{s}(x,f^{n})|)\\
			\leq &\frac{\varepsilon_{1}}{2}.
		\end{align*}
		Choose a $C^{1+\alpha}$ diffeomorphism $f$ satisfying~$d_{C^{1}}(f,g)<\varepsilon$ and $d_{C^{1}}(f^n,g^n)<\varepsilon$. Then we have
		\begin{equation}\label{pressure relation1}
			P(f|_{\Lambda_{f}},-\frac{1}{n}\varphi^{s}(\cdot,f^{n}))=P^u(f|_{\Lambda_f},-\frac{1}{n}\varphi^{s}(\cdot,f^{n})).
		\end{equation}	
		By the property of topological pressure and unstable pressure, we have 
		\begin{equation}\label{pressure relation2}
			P(g|_{\Lambda_{g}},-\frac{1}{n}\varphi^{s}(\cdot,g^{n}))=P(f|_{\Lambda_{f}},-\frac{1}{n}\varphi^{s}(h(\cdot),g^{n}))
		\end{equation}
		and 
		\begin{equation}\label{pressure relation3}
			P^u(g|_{\Lambda_{g}},-\frac{1}{n}\varphi^{s}(\cdot,g^{n}))
			=P^u(f|_{\Lambda_{f}},-\frac{1}{n}\varphi^{s}(h(\cdot),g^{n})).
		\end{equation}
		By (\ref{pressure relation1}), (\ref{pressure relation2}) and (\ref{pressure relation3}), we have
		\begin{align*}
			&|P(g|_{\Lambda_{g}},-\frac{1}{n}\varphi^{s}(\cdot,g^{n}))-P^u(g|_{\Lambda_{g}},-\frac{1}{n}\varphi^{s}(\cdot,g^{n}))|\\
			=&|P(f|_{\Lambda_{f}},-\frac{1}{n}\varphi^{s}(h(\cdot),g^{n}))-P^u(g|_{\Lambda_{g}},-\frac{1}{n}\varphi^{s}(\cdot,g^{n}))|\\
			\leq &|P(f|_{\Lambda_{f}},-\frac{1}{n}\varphi^{s}(h(\cdot),g^{n}))-P(f|_{\Lambda_{f}},-\frac{1}{n}\varphi^{s}(\cdot,f^{n}))|\\
			+&|P(f|_{\Lambda_{f}},-\frac{1}{n}\varphi^{s}(\cdot,f^{n}))-P^u(f|_{\Lambda_{f}},-\frac{1}{n}\varphi^{s}(\cdot,f^{n}))|\\
			+&|P^u(f|_{\Lambda_{f}},-\frac{1}{n}\varphi^{s}(\cdot,f^{n}))-P^u(f|_{\Lambda_{f}},-\frac{1}{n}\varphi^{s}(h(\cdot),g^{n}))|\\
			+&|P^u(f|_{\Lambda_{f}},-\frac{1}{n}\varphi^{s}(h(\cdot),g^{n}))-P^u(g|_{\Lambda_{g}},-\frac{1}{n}\varphi^{s}(\cdot,g^{n}))|\\
			\leq &2\max_{x\in\Lambda_{f}}|\frac{1}{n}\varphi^{s}(h(x),g^{n})-\frac{1}{n}\varphi^{s}(x,f^{n})|\leq \varepsilon_{1}.
		\end{align*}
		Letting $\varepsilon_{1}$ tend to $0$, we finish the proof.
	\end{proof}
	The result in \cite{BCH} shows that the sub-additive topological pressure can be approximated by a sequence of additive ones.
	\begin{Prop}\label{topo pressure appro}
		Suppose $f:M\rightarrow M$ is a $C^1$ local diffeomorphism  with  the map $h_{\mu}(f):\mathcal{M}_{inv}(f)\rightarrow \mathbb{R}$ is upper semi-continuous and $\Phi=\{\varphi_n(x)\}$ is a sub-additive continuous function sequence on $M$. Then we have 
		$$P(f,\Phi)=\lim_{n\rightarrow \infty} P(f,\frac{\varphi_n}{n})=\lim_{n\rightarrow \infty}\frac{1}{n}P(f^n,\varphi_n).$$
	\end{Prop}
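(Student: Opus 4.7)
My plan is to derive both equalities from the sub-additive variational principle of Theorem \ref{variational principle}, combined with the assumed upper semi-continuity of $\mu \mapsto h_\mu(f)$ on $\mathcal{M}_{inv}(f)$ and weak-$*$ compactness. The key observation to exploit on the sub-additive side is that, by sub-additivity of $\{\varphi_n\}$ together with $f$-invariance of $\mu$, the numerical sequence $n \mapsto \int \varphi_n d\mu$ is itself sub-additive, so Fekete's lemma gives $\Phi_*(\mu) = \inf_n \frac{1}{n}\int\varphi_n d\mu$.

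For the first equality $P(f,\Phi) = \lim_n P(f,\varphi_n/n)$, the easy bound $P(f,\varphi_n/n) \geq P(f,\Phi)$ for every $n$ follows at once because $\frac{1}{n}\int \varphi_n d\mu \geq \Phi_*(\mu)$. For the opposite direction I would pick $\mu_n \in \mathcal{M}_{inv}(f)$ nearly attaining the supremum defining $P(f,\varphi_n/n)$ and pass to a weak-$*$ accumulation point $\mu$. Upper semi-continuity yields $\limsup_n h_{\mu_n}(f) \leq h_\mu(f)$. For the potential term, I would fix $m$ and write $n = qm+r$; iterating the sub-additive inequality and using $f$-invariance of $\mu_n$ produces
$$\frac{1}{n}\int \varphi_n \, d\mu_n \leq \frac{q}{n}\int \varphi_m \, d\mu_n + \frac{1}{n}\int \varphi_r \, d\mu_n,$$
whose right side tends to $\frac{1}{m}\int \varphi_m d\mu$ as $n\to\infty$. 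Sending $m \to \infty$ gives $\limsup_n \frac{1}{n}\int \varphi_n d\mu_n \leq \Phi_*(\mu)$; combined with the entropy bound this closes the argument.

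For the second equality $\lim_n \frac{1}{n}P(f^n,\varphi_n) = \lim_n P(f,\varphi_n/n)$, I would use Abramov's identity $h_\mu(f^n) = n h_\mu(f)$ for $\mu \in \mathcal{M}_{inv}(f) \subset \mathcal{M}_{inv}(f^n)$, which immediately yields $n P(f,\varphi_n/n) \leq P(f^n,\varphi_n)$ by testing the variational principle for $P(f^n,\varphi_n)$ against $f$-invariant measures. For the reverse bound, given $\nu \in \mathcal{M}_{inv}(f^n)$, I would symmetrize to $\bar\nu = \frac{1}{n}\sum_{k=0}^{n-1} f^k_*\nu \in \mathcal{M}_{inv}(f)$; affinity and concavity of entropy produce $h_\nu(f^n) \leq h_{\bar\nu}(f^n) = n h_{\bar\nu}(f)$, while $\int \varphi_n d\nu$ must be related to $\int \varphi_n d\bar\nu = \frac{1}{n}\sum_k \int \varphi_n \circ f^k d\nu$ by a further sub-additive averaging step.

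The main obstacle will be this last symmetrization: for $\nu$ that is $f^n$-invariant but not $f$-invariant there is no identity linking $\int \varphi_n d\nu$ with $\int \varphi_n d\bar\nu$, so one must iterate the sub-additive bound $\varphi_{nm} \leq \sum_{j=0}^{m-1} \varphi_n \circ f^{jn}$ and divide by $nm$ to average the discrepancy out. Once the resulting error is controlled uniformly, the inequality $\frac{1}{n}P(f^n,\varphi_n) \leq P(f, \varphi_n/n) + o(1)$ will follow and force both limits to agree with $P(f,\Phi)$ via the first part.
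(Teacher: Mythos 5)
The paper itself gives no proof of this proposition --- it is quoted from \cite{BCH} --- so I am judging your argument on its own terms. Your treatment of the first equality and of the easy half of the second is correct: $P(f,\varphi_n/n)\geq P(f,\Phi)$ is immediate from $\frac1n\int\varphi_n\,d\mu\geq\Phi_*(\mu)$; the reverse bound via near-optimal $\mu_n$, weak-$*$ compactness, upper semi-continuity of entropy, and the $n=qm+r$ telescoping works cleanly precisely because $\mu_n$ is $f$-invariant; and $nP(f,\varphi_n/n)\leq P(f^n,\varphi_n)$ follows from $\mathcal{M}_{inv}(f)\subset\mathcal{M}_{inv}(f^n)$ together with Abramov.

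The gap is in the hard half of the second equality, and it is more than an error to be ``controlled uniformly.'' The sub-additive bound you name, $\varphi_{nm}\leq\sum_{j=0}^{m-1}\varphi_n\circ f^{jn}$, integrated against $\nu\in\mathcal{M}_{inv}(f^n)$, yields only $\int\varphi_{nm}\,d\nu\leq m\int\varphi_n\,d\nu$; this stays entirely on the $\nu$-side and never links $\int\varphi_n\,d\nu$ to any integral against $\bar\nu=\frac1n\sum_{k=0}^{n-1}f^k_*\nu$, which is exactly what the symmetrization step requires. The tool that actually works is the offset-averaged sub-additivity inequality: for each fixed $m$ there is a constant $C(m)$, depending only on $\max_{1\leq j\leq m}\|\varphi_j\|_\infty$ and not on $n$, such that for all $n\geq m$ and all $x$,
$$\varphi_n(x)\;\leq\;\frac1m\sum_{k=0}^{n-1}\varphi_m(f^kx)\;+\;C(m),$$
obtained by telescoping in blocks of length $m$ from each of the $m$ starting offsets $l=0,\dots,m-1$ and averaging over $l$. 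The crucial feature is that the right-hand side is a Birkhoff sum with respect to $f$, so integrating against $\nu\in\mathcal{M}_{inv}(f^n)$ gives $\frac{n}{m}\int\varphi_m\,d\bar\nu+C(m)$ --- this is the step that transfers the potential term from $\nu$ to $\bar\nu$. Combined with $h_\nu(f^n)=n\,h_{\bar\nu}(f)$ this gives $\frac1nP(f^n,\varphi_n)\leq P(f,\varphi_m/m)+C(m)/n$, hence $\limsup_n\frac1nP(f^n,\varphi_n)\leq P(f,\varphi_m/m)$ for every fixed $m$; sending $m\to\infty$ and invoking the first equality closes the argument. The inequality you aimed for, $\frac1nP(f^n,\varphi_n)\leq P(f,\varphi_n/n)+o(1)$, is a corollary of this, not the right intermediate goal.
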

	A similar result holds for the sub-additive unstable pressure.
	\begin{Prop}\cite{ZhLZh}\label{unstable pressure appro}
		Let $f:M\rightarrow M$ be a $C^1$ partially hyperbolic diffeomorphism and $\Phi=\{\varphi_n(x)\}$ be sub-additive potentials of $f$ on $M$. Then 
		$$P^u(f,\Phi)=\lim_{n\rightarrow \infty} P^u(f,\frac{\varphi_n}{n})$$
		if and only if 
		\begin{align}\label{variational principle2}
			P^u(f,\Phi)=\sup \{h_\mu^u(f)+\Phi_\ast(\mu):\mu\in \mathcal{M}_{inv}(f)\},
		\end{align}
		where $\mathcal{M}_{inv}(f)$ denotes the space of the invariant measures for $f$ and $$\Phi_{\ast}(\mu)=\lim_{n\rightarrow\infty}\frac{1}{n}\int \varphi_n d\mu.$$
	\end{Prop}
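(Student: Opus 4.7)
The plan is to reduce both conditions in the proposition to a single identity relating a limit of suprema to a sup of limits. By the variational principle for the additive unstable pressure of \cite{HWZh} applied to $\psi = \varphi_n/n$, one has
\begin{equation*}
P^u(f, \varphi_n/n) = \sup_{\mu \in \mathcal{M}_{inv}(f)} \Bigl\{ h^u_\mu(f) + \tfrac{1}{n}\int \varphi_n\, d\mu \Bigr\}
\end{equation*}
for every $n \geq 1$. Setting $L := \lim_n P^u(f, \varphi_n/n)$ and $R := \sup_\mu\{h^u_\mu(f) + \Phi_*(\mu)\}$, the two conditions of the proposition read $P^u(f,\Phi) = L$ and $P^u(f,\Phi) = R$ respectively, so the proposition will follow from the unconditional identity $L = R$.

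I would first establish that for every $\mu \in \mathcal{M}_{inv}(f)$ the sequence $a_n(\mu) := \int \varphi_n\, d\mu$ is sub-additive: $f$-invariance together with $\varphi_{m+n} \leq \varphi_n + \varphi_m \circ f^n$ gives $a_{m+n}(\mu) \leq a_n(\mu) + a_m(\mu)$, so Fekete's lemma yields $\tfrac{1}{n}a_n(\mu) \searrow \Phi_*(\mu) = \inf_n \tfrac{1}{n} a_n(\mu)$. The inequality $L \geq R$ is immediate from this monotonicity, since for any fixed $\mu$ one has $h^u_\mu(f) + \tfrac{1}{n} a_n(\mu) \geq h^u_\mu(f) + \Phi_*(\mu)$; take the sup over $\mu$ and then $n\to\infty$.

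For the reverse $L \leq R$, I would pick near-optimizers $\mu_n$ with $h^u_{\mu_n}(f) + \tfrac{1}{n} a_n(\mu_n) \geq P^u(f, \varphi_n/n) - \tfrac{1}{n}$ and, by weak-$\ast$ compactness of $\mathcal{M}_{inv}(f)$, pass to a subsequence $\mu_{n_k} \rightharpoonup \mu_\infty$. For any fixed $m$, writing $n_k = q_k m + r_k$ with $0 \leq r_k < m$ and iterating the sub-additivity $\varphi_{n_k} \leq \sum_{j=0}^{q_k - 1} \varphi_m\circ f^{jm} + \varphi_{r_k}\circ f^{q_k m}$ together with invariance of $\mu_{n_k}$ yields
\begin{equation*}
\tfrac{1}{n_k} a_{n_k}(\mu_{n_k}) \leq \tfrac{q_k}{n_k}\int \varphi_m\, d\mu_{n_k} + \tfrac{1}{n_k}\|\varphi_{r_k}\|_\infty.
\end{equation*}
Letting $k \to \infty$ (using continuity of each $\varphi_m$, so $\int \varphi_m\, d\mu_{n_k} \to \int\varphi_m\, d\mu_\infty$) and then taking the infimum over $m$ gives $\limsup_k \tfrac{1}{n_k} a_{n_k}(\mu_{n_k}) \leq \Phi_*(\mu_\infty)$.

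The main obstacle is the upper semi-continuity of the unstable metric entropy $\mu \mapsto h^u_\mu(f)$, which one needs to conclude $\limsup_k h^u_{\mu_{n_k}}(f) \leq h^u_{\mu_\infty}(f)$ and close the argument via $L \leq h^u_{\mu_\infty}(f) + \Phi_*(\mu_\infty) \leq R$. Upper semi-continuity of unstable entropy is not automatic in the mere $C^1$ category: it typically requires $C^{1+\alpha}$ regularity or additional domination hypotheses. This is presumably where the main technical work of \cite{ZhLZh} sits, and it is precisely the reason the proposition is formulated as an \emph{if and only if} rather than an unconditional equality, so that the upper semi-continuity issue is absorbed into the hypothesis that one of the two equalities already holds.
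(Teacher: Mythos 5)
The paper does not prove this proposition; it is quoted verbatim from \cite{ZhLZh}, so there is no internal proof to compare against and your attempt must be judged on its own.

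Your reduction of the if-and-only-if to the single unconditional identity $L=R$ is legitimate: if $L=R$ then the two displayed equalities are the same statement and the equivalence is trivial. Your proof that $L\geq R$ is essentially correct, though the claim that Fekete's lemma gives $\tfrac{1}{n}a_n(\mu)\searrow\Phi_*(\mu)$ is stated a little too strongly --- for a sub-additive sequence, $a_n/n$ need not be monotone; what Fekete actually gives, and what you use, is that $\tfrac{1}{n}a_n(\mu)\geq\inf_m\tfrac{1}{m}a_m(\mu)=\Phi_*(\mu)$ for every $n$, which already yields $P^u(f,\varphi_n/n)\geq R$ for every $n$ and hence $\liminf_n P^u(f,\varphi_n/n)\geq R$. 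The $L\leq R$ argument via near-maximizing measures, weak-$\ast$ compactness, the quotient-remainder decomposition, and upper semi-continuity of $\mu\mapsto h^u_\mu(f)$ is exactly the standard way to push a limit inside a supremum, and it is correct given those ingredients.

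Where you go wrong is the final paragraph. Upper semi-continuity of the \emph{unstable} metric entropy $\mu\mapsto h^u_\mu(f)$ does hold for $C^1$ partially hyperbolic diffeomorphisms; this is one of the central results of Hu, Hua and Wu \cite{HHW}, and it is precisely the feature that distinguishes unstable entropy from the full metric entropy (which indeed can fail to be upper semi-continuous in the $C^1$ category). The unstable entropy is computed against measurable partitions subordinate to the unstable foliation, and the required approximation properties survive in $C^1$; no $C^{1+\alpha}$ hypothesis or bunching condition is needed. With this ingredient supplied your argument closes with no gap and gives the unconditional $L=R$, which is strictly stronger than the stated equivalence. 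Consequently the speculation that the proposition is phrased as an iff ``so that the upper semi-continuity issue is absorbed into the hypothesis'' does not hold up: assuming one of the two displayed equalities gives you no purchase on upper semi-continuity and does not by itself force $L=R$, so the iff format is not a way to sidestep that estimate, and in the end the estimate is not the obstruction you feared.
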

	\begin{proof}[Proof of Theorem \ref{equal}]
		By Theorem \ref{unstablepressure}, it holds that $$P(f|_{\Lambda_{f}},-\frac{1}{n}\varphi^{s}(\cdot,f^{n}))=P^{u}(f|_{\Lambda_{f}},-\frac{1}{n}\varphi^{s}(\cdot,f^{n})).$$By Theorem 1.1 in \cite{ZhLZh}, (\ref{variational principle2}) holds for $C^1$ partially hyperbolic diffeomorphisms. Thus by Proposition \ref{topo pressure appro} and \ref{unstable pressure appro}, it holds that 
		\begin{align*}
			P^{u}(f|_{\Lambda_f},\Phi_f(s))&=\lim_{n \rightarrow \infty}P^{u}(f|_{\Lambda_{f}},-\frac{1}{n}\varphi^{s}(\cdot,f^{n}))\\
			&=\lim_{n \rightarrow \infty}P(f|_{\Lambda_{f}},-\frac{1}{n}\varphi^{s}(\cdot,f^{n}))\\
			&=P(f|_{\Lambda_f},\Phi_f(s)).
		\end{align*}
	\end{proof}
	
	\subsection{A new definition for topological pressure} While studying the dimension of the hyperbolic set on the unstable manifold, we find a new expression of the topological pressure for a special potential and conjecture that this holds for general continuous potentials, see Proposition \ref{unique root}. Besides, we give a new definition of the topological pressure via the dimensional form, see Proposition \ref{new formula}. This is inspired by Proposition 3.4 in \cite{FS}. We recall the definition of topological pressure on subsets, one can see \cite{Pes} for reference. 
	\begin{Def}
		Suppose $X$ is a compact metric space and $f: X \rightarrow X$ is a continuous map.
		For any given potential  $\varphi: X\rightarrow \mathbb{R}$, subset  $Z\subset X$, $\delta >0$ and  $N\in \mathbb{N}$, denote  $\mathcal{P}(Z, N, \delta)$ the collection of countably many sets  $\Big\{(x_{i}, n_{i})\in Z\times \{N, N+1, ...\}\Big\}$ such that
		$Z\subset \bigcup_{i}B_{n_{i}}(x_{i}, \delta)$,
		where
		$$ B_{n_{i}}(x_{i}, \delta)=\{y\in X: d(f^{j}(x_{i}), f^{j}(y))<\delta, \ j=0, 1, ..., n_{i}-1\}.$$
		Given $ s\in \mathbb{R}$, define
		\begin{eqnarray*}
			\begin{aligned}
				m_{P}(Z, s, \varphi, N, \delta)&=\inf_{\mathcal{P}(Z, N, \delta)}\sum_{(x_{i}, n_{i})}\exp(-n_{i}s+S_{n_{i}}\varphi(x_{i})),\\
				m_{P}(Z, s, \varphi, \delta)&=\lim_{N\rightarrow \infty}m_{P}(Z, s, \varphi, N, \delta).
			\end{aligned}
		\end{eqnarray*}
		$m_{P}(Z, s, \varphi, \delta)$ is non-increasing in ~$s$, and takes values $\infty$ and $0$ at all but at most one value of $s$. Denote the critical value of $s$ by
		\begin{align*}
			P_{Z}(\varphi, \delta)&=\inf\{s\in \mathbb{R}|m_{P}(Z, s, \varphi, \delta)=0\}\\
			&=\sup\{s\in \mathbb{R}|m_{P}(Z, s, \varphi, \delta)=\infty\}.
		\end{align*}
		
		The {\it topological pressure of the potential ~$\varphi$ with respect to $f$ on a set $Z$} is defined as
		$$P_{Z}(f, \varphi)=\lim_{\delta \rightarrow 0}P_{Z}(\varphi, \delta).$$
	\end{Def}
	If $Z$ is a compact invariant set, then this definition coincides with that in subsection \ref{Topological pressure}, see \cite{Pes} for reference.
	
	To give the new definition of the topological pressure, we give some notations first, which are similar to the ones in the definition of the topological pressure on subsets. Suppose $(X,\sigma)$ is a two-sided subshift over a finite alphabet $\mathcal{A}$ and $\varphi$ and $h$ are continuous functions. Assume that $h(x)<0$ for all $x\in X$. 
	For $r>0$, we set 
	$$\mathcal{A}_r\triangleq \{i_0...i_{n-1}\in X^\ast:\sup_{x\in[i_0...i_{n-1}]\cap X}\exp{(S_nh(x))}<r\leq \sup_{y\in[i_0...i_{n-2}]\cap X}\exp{(S_{n-1}h(y))}\},$$
	where $X^*$ is the finite words allowed in $X$. For a finite word $I=i_0...i_{n-1}\in X^*$, denote $|I|=n$ the length of $I$ and $[I]$ the corresponding cylinder.  
	For $r>0$ and $\lambda\in\mathbb{R}$, we set
	$$\tilde{m}_P(Z,\lambda,\varphi,r)\triangleq \inf\sum_{|I_i|}\exp{(-\lambda |I_i|)}\sup_{x\in [I_i]\cap X}\exp{S_{|I_i|}\varphi(x)} $$
	where the infimum is taken over all the cover of $Z$ of the form $\{[I_i]\}_{I_i\in \mathcal{A}_{r_i}}$ with $r_i\leq r$. Denote by
	$$\tilde{m}_P(Z,\lambda,\varphi)\triangleq \lim_{r \rightarrow 0} \tilde{m}_P(Z,\lambda,\varphi,r)$$ and
	$$\tilde{P}_Z(\sigma,\varphi)\triangleq \inf\{\lambda\in\mathbb{R}|\tilde{m}_P(Z,\lambda,\varphi)=0\}.$$

	The first main result in this subsection is the following proposition.
	\begin{Prop}\label{new formula}
		Assume that $(X,\sigma)$ is a two-sided subshift over a finite alphabet $\mathcal{A}$. Suppose $\varphi$ and $h$ are continuous functions. Assume that $h(x)<0$ for all $x\in X$. Then we have
		$$\tilde{P}_X(\sigma,\varphi)=P_X(\sigma,\varphi).$$
	\end{Prop}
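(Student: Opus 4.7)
The plan is to exploit the subshift structure of $(X,\sigma)$ to interchange Bowen-ball covers and cylinder covers, and thereby show that both dimension-like families produce the same critical exponent. \emph{Setup:} since $h$ is continuous and negative on the compact space $X$, fix $0<m\le M<\infty$ with $-M\le h\le-m$; then every $[J]\in\mathcal A_r$ has $|J|\in(-\log r/M,\,-\log r/m+1]$, so cylinder lengths diverge uniformly as $r\to 0$. The explicit form of the subshift metric implies that for $\delta=e^{-k}$ one has $d(u,v)<\delta$ iff $u_j=v_j$ for $|j|\le k-1$, up to a fixed multiplicative constant in $\delta$; consequently $B_n(w,\delta)\cap X=[w_{-k+1}\cdots w_{n+k-2}]\cap X$ is a cylinder of length $n+2k-2$, while each length-$n$ cylinder $[w_0\cdots w_{n-1}]$ is the disjoint union of at most $|\mathcal A|^{2k-2}$ such Bowen balls of time $n$ and radius $\delta$. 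The key auxiliary inequality is the Birkhoff-variation bound $|S_n\varphi(y)-S_n\varphi(y')|\le n\,\omega_\varphi(C\delta)$ for $y,y'$ in a common cylinder of length $\ge n+2k-2$, where $\omega_\varphi$ denotes the modulus of continuity of $\varphi$.

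\emph{Direction $\tilde P_X(\sigma,\varphi)\le P_X(\sigma,\varphi)$:} fix $\lambda>P_X(\sigma,\varphi)$ and $r,\epsilon>0$. Choose $\delta$ so small that $\omega_\varphi(C\delta)<\lambda-P_X(\sigma,\varphi)$, pick $s\in(P_X(\sigma,\varphi),\,\lambda-\omega_\varphi(C\delta))$, and take $N$ sufficiently large to ensure both $N\ge-\log r/m+3-2k$ and the existence (by definition of $P_X$) of a Bowen-ball cover $\{B_{n_i}(x_i,\delta)\}_i$ of $X$ with $n_i\ge N$ and $\sum_i e^{-sn_i+S_{n_i}\varphi(x_i)}<\epsilon$. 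Replacing each Bowen ball by the cylinder $[J_i]$ of length $|J_i|=n_i+2k-2$, the length estimate ensures $[J_i]\in\mathcal A_{r_i}$ for some $r_i\le r$. Combining the variation bound with $|S_{|J_i|}\varphi(y)-S_{n_i}\varphi(y)|\le(2k-2)\|\varphi\|_\infty$ yields
\[
\sum_i e^{-\lambda|J_i|}\sup_{[J_i]}e^{S_{|J_i|}\varphi}\;\le\;C(\delta)\sum_i e^{-(\lambda-\omega_\varphi(C\delta))n_i+S_{n_i}\varphi(x_i)}\;\le\;C(\delta)\,\epsilon,
\]
since $\lambda-\omega_\varphi(C\delta)>s$. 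Letting $\epsilon\to 0$ gives $\tilde m_P(X,\lambda,\varphi,r)=0$ for each $r>0$, hence $\tilde P_X(\sigma,\varphi)\le\lambda$; taking $\lambda\downarrow P_X(\sigma,\varphi)$ completes this direction.

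\emph{Direction $P_X(\sigma,\varphi)\le\tilde P_X(\sigma,\varphi)$ and main obstacle:} conversely, fix $\lambda>\tilde P_X(\sigma,\varphi)$, $\delta=e^{-k}$, and $N\in\mathbb N$; choose $r>0$ small enough that every cylinder in $\bigcup_{r'\le r}\mathcal A_{r'}$ has length $\ge N$. Given a cylinder cover $\{[I_i]\}_i$ with $\sum_i e^{-\lambda|I_i|}\sup_{[I_i]}e^{S_{|I_i|}\varphi}<\epsilon$, refine each $[I_i]$ into at most $|\mathcal A|^{2k-2}$ Bowen balls of time $|I_i|$ and radius $\delta$, and bound the chosen centre values of $e^{S_{|I_i|}\varphi}$ by the cylinder supremum to obtain $m_P(X,\lambda,\varphi,N,\delta)\le|\mathcal A|^{2k-2}\epsilon$. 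Sending $\epsilon\to 0$, $\delta\to 0$, and $\lambda\downarrow\tilde P_X(\sigma,\varphi)$ gives the reverse inequality. The principal technical obstacle throughout is that $\sup_{[J]}e^{S_{|J|}\varphi}$ can exceed the point value $e^{S_n\varphi(x_i)}$ by a factor growing exponentially in $n$; this is handled by the Birkhoff-variation estimate, which bounds the factor by $e^{n\omega_\varphi(C\delta)}$, absorbed into an arbitrarily small shift in the test parameter and vanishing as $\delta\to 0$ in the definition of $P_X$.
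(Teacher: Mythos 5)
Your route is genuinely different from the paper's. The paper first invokes the standard reduction (citing Pesin's book) that for a symbolic system the Carath\'eodory pressure $P_X(\sigma,\varphi)$ can be computed from cylinder covers rather than Bowen balls, and then compares the two cylinder-based set functions $m_P$ and $\tilde m_P$ directly: the inequality $m_P\le\tilde m_P$ is immediate because every $\mathcal{A}_r$-cover is a cylinder cover of length at least $m(r)\to\infty$, and the reverse is obtained by showing $\tilde m_P(X,\lambda+P,\varphi)\le 1$ for every $\lambda>0$ via the sum formula $P=\lim_n n^{-1}\log\sum_{|I|=n}\sup_{[I]\cap X}e^{S_{|I|}\varphi}$ and a geometric series estimate. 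You instead attempt the Bowen-ball-to-cylinder conversion from first principles, controlling errors by the modulus of continuity of $\varphi$. That is a reasonable strategy, but as written there is a gap in the conversion.

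For the two-sided shift metric, the Bowen ball $B_n(w,\delta)$ with $\delta\sim e^{-k}$ is the set of $x\in X$ agreeing with $w$ on coordinates $-k+1,\dots,n+k-2$, i.e.\ a \emph{two-sided} cylinder; by contrast $\mathcal{A}_r$ consists of one-sided words $i_0\dots i_{m-1}\in X^*$, whose cylinders $[i_0\dots i_{m-1}]$ constrain only the coordinates $0,\dots,m-1$. Your ``$[J_i]$ of length $n_i+2k-2$'' is a two-sided cylinder, so $[J_i]\in\mathcal{A}_{r_i}$ does not make sense as stated. Relatedly, the Birkhoff-variation estimate $|S_n\varphi(y)-S_n\varphi(y')|\le n\,\omega_\varphi(C\delta)$ holds when $y,y'$ share a two-sided cylinder; for a one-sided cylinder $[i_0\dots i_{N-1}]$ the summands with $j<k-1$ see unconstrained negative coordinates, so the bound incurs an additional additive error of order $k\|\varphi\|_\infty$. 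Both defects are repairable: replace each Bowen ball by the one-sided cylinder $[x_{i,0}\dots x_{i,n_i+k-2}]$ of length $n_i+k-1$ containing it (or shift by $\sigma^{-(k-1)}$), recompute the length threshold for membership in some $\mathcal{A}_{r'}$ with $r'\le r$ accordingly, and absorb the $O(k)\|\varphi\|_\infty$ term into your $C(\delta)$, which disappears in the final limit $\delta\to0$. But the proof as submitted conflates one-sided and two-sided cylinders, and this is precisely the bookkeeping that the paper's appeal to Pesin's simplified symbolic formula is designed to sidestep.
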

	\begin{proof}
		For symbolic dynamical systems, the definition of the topological pressure on subsets can be simplified as the following, see for example Appendix II in \cite{Pes}. 
		Given $Z\subset X$, $n\in \mathbb{N}$ and $\lambda\in\mathbb{R}$, set
		$$m_P(Z,\lambda,\varphi,n)\triangleq \inf\sum_{|I_i|\triangleq n_i \geq n}\exp{(-\lambda |I_i|)}\sup_{x\in [I_i]\cap X}\exp{S_{|I_i|}\varphi(x)}, $$
		where the infimum is taken over all the cover $\{[I_i]\}_{|I_i|\triangleq n_i \geq n}$ of $Z$. Set
		$$m_P(Z,\lambda,\varphi)\triangleq \lim_{n \rightarrow +\infty} m_P(Z,\lambda,\varphi,n)$$ and
		$$P_Z(\sigma,\varphi)\triangleq \inf\{\lambda\in\mathbb{R}|m_P(Z,\lambda,\varphi)=0\}.$$

		For any $r>0$ small and $\bold{i}\in X$, there exists $n(\bold{i},r)\in \mathbb{N}$ such that $i_0i_1...i_{{n(\bold{i},r)}-1}\in\mathcal{A}_r$.  Set
		\begin{equation}\label{least1}
			m(r)\triangleq\inf\{|I|:I\in\mathcal{A}_r\}.
		\end{equation}
		For any $r_0>0$, there exists $N(r_0)\triangleq m(r_0)$ such that for any $0<r<r_0$, any $\bold{i}\in X$, we have 
		\begin{equation*}
			n(\bold{i},r)\geq n(\bold{i},r_0)\geq N(r_0). 
		\end{equation*}
		For $r>0$ small enough and a cover $\{[I_i]\}_{I_i\in \mathcal{A}_{r_i}}$ of $X$ with $r_i\leq r$, set $N=m(r)$, then $|I_i|\triangleq n_i\geq N$.
		Thus  $\{[I_i]\}_{I_i\in \mathcal{A}_{r_i}}$ is also  a cover of $X$ with $|I_i|\triangleq n_i\geq N$ and $N$ tends to infinity as $r$ tends to $0$. It follows that  $$m_P(X,\lambda,\varphi)\leq \tilde{m}_P(X,\lambda,\varphi).$$ 
		Thus we have $P_X(\sigma,\varphi)\leq \tilde{P}_X(\sigma,\varphi)$.

		Let $r_1=\sup_{x\in X}\exp(h(x))$. For $r\in (0,r_1)$, define $$\Gamma_r=\sum_{I\in\mathcal{A}_r}\sup_{x\in[I]\cap X}\exp(S_{|I|}\varphi(x)).$$
		Denote $P=P_X(\sigma,\varphi)$. By definition we have 
		$$\lim_{n\rightarrow +\infty}\frac{1}{n}\log(\sum_{I\in X^*:|I|=n}\sup_{x\in[I]\cap X}\exp{(S_{|I|}\varphi(x))})=P.$$
		Hence for any $\lambda>0$, there exists a large $N$ such that $e^{-\lambda N/2}<1-e^{-\lambda/2}$ and for any $n\geq N$,
		$$\gamma_n\triangleq \sum_{I\in X^*:|I|=n}\exp((-\lambda-P)|I|)\sup_{x\in [I]\cap X}\exp{(S_{|I|}\varphi(x))}\leq \exp{(-\lambda n/2)}.$$
		Since $m(r)$ tends to infinity as $r$ tends to $0$, where $m(r)$ is defined in (\ref{least1}), we can take a small $r_2\in (0,r_1)$ so that $m(r)\geq N$ for any $0<r<r_2$. Hence for any $0<r<r_2$, $$\mathcal{A}_r\subset \{I\in X^*:|I|\geq N\}$$ and so 
		\begin{align*}
			&\sum_{I\in\mathcal{A}_r}\exp{((-\lambda-P)|I|)}\sup_{x\in[I]\cap X}\exp(S_{|I|}\varphi(x)) \\
			\leq &\sum_{n=N}^{\infty}\gamma_n \leq \sum_{n=N}^{\infty}\exp{(-\lambda n/2)}=\frac{e^{-\lambda N/2}}{1-e^{-\lambda/2}}<1.
		\end{align*}
		
		It follows that $\tilde{m}_P(X,\lambda+P,\varphi)\leq 1$. By arbitrariness of $\lambda>0$ and the definition of $\tilde{P}_X(\sigma,\varphi)$, we have $\tilde{P}_X(\sigma,\varphi)\leq P$.
	\end{proof}

	We also can give another expression of topological pressure for a special potential. This result is inspired by an observation of the proof of Proposition 3.4 in \cite{FS}.
	\begin{Prop}\label{unique root}
		Assume that $(X,\sigma)$ is a two-sided subshift over a finite alphabet $\mathcal{A}$. Suppose $g$ and $h$ are continuous functions. Assume that $h(x)<0$ for all $x\in X$ and $t\in\mathbb{R}$ is the unique root of $P_X(\sigma,g+th)=0$. Then
		\begin{eqnarray*}
			\lim_{r\rightarrow 0}\frac{\log \Gamma_r}{\log r}=0,
		\end{eqnarray*}
		where $$\Gamma_r=\sum_{I\in \mathcal{A}_r}\sup_{x\in [I]\cap X} \exp{(S_{|I|}(g+th)(x))}.$$
	\end{Prop}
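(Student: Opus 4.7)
The plan is to establish matching bounds $r^{\varepsilon/m}\lesssim \Gamma_r\lesssim r^{-\varepsilon/m}$ (up to subpolynomial factors) for every $\varepsilon>0$ once $r$ is small; this immediately gives $\log\Gamma_r/\log r\to 0$ by letting $\varepsilon\downarrow 0$. The starting point is the geometric observation that $h$ is continuous and strictly negative on the compact set $X$, so there exist constants $0<m\leq M$ with $-M\leq h\leq -m$. Combined with the defining inequalities for $\mathcal{A}_r$, this forces every $I\in\mathcal{A}_r$ to have length $|I|=n$ in an interval $[n_1(r),n_2(r)]$ with $n_1(r)\sim -\log r/M$ and $n_2(r)\sim -\log r/m$; in particular $|I|\asymp -\log r$ uniformly in $I$. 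This comparability is what converts the weight $e^{-\lambda|I|}$ in the Carath\'eodory--Pesin construction for $m_P$ into a power of $r$ close to $1$ when $\lambda$ is small, and drives the whole argument.

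For the upper bound I would invoke the symbolic partition-function formula $P_X(\sigma,g+th)=\lim_{n\to\infty}\tfrac{1}{n}\log Z_n=0$, where $Z_n=\sum_{|I|=n}\sup_{[I]\cap X}\exp(S_n(g+th))$. Given $\varepsilon>0$ this gives $Z_n\leq e^{\varepsilon n}$ for $n$ large. Since $\{I\in\mathcal{A}_r:|I|=n\}\subseteq\{|I|=n\}$ and only $n\in[n_1(r),n_2(r)]$ contribute to $\Gamma_r$, for $r$ small we obtain
\begin{equation*}
\Gamma_r\leq \sum_{n=n_1(r)}^{n_2(r)} Z_n\leq \bigl(n_2(r)-n_1(r)+1\bigr)\,e^{\varepsilon n_2(r)}=O\bigl(\log(1/r)\cdot r^{-\varepsilon/m}\bigr),
\end{equation*}
hence $\liminf_{r\to 0}\log\Gamma_r/\log r\geq -\varepsilon/m$.

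For the lower bound I would use that $\{[I]\}_{I\in\mathcal{A}_r}$ is a cover of $X$ (in fact a partition) whose minimum cylinder length $n_1(r)$ tends to $\infty$ as $r\to 0$. Since $t$ is the unique root of $P_X(\sigma,g+th)=0$, the Carath\'eodory-type quantity satisfies $m_P(X,-\varepsilon,g+th)=+\infty$, i.e.\ $m_P(X,-\varepsilon,g+th,N)\to+\infty$ as $N\to\infty$. Applying this with $N=n_1(r)$ to the cover $\{[I]\}_{I\in\mathcal{A}_r}$ gives
\begin{equation*}
\sum_{I\in\mathcal{A}_r}e^{\varepsilon|I|}\sup_{[I]\cap X}\exp(S_{|I|}(g+th))\geq m_P(X,-\varepsilon,g+th,n_1(r))\longrightarrow\infty,
\end{equation*}
and the uniform bound $e^{\varepsilon|I|}\leq e^{\varepsilon}r^{-\varepsilon/m}$ turns this into $\Gamma_r\geq e^{-\varepsilon}r^{\varepsilon/m}\,m_P(X,-\varepsilon,g+th,n_1(r))$, so $\limsup_{r\to 0}\log\Gamma_r/\log r\leq \varepsilon/m$. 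Letting $\varepsilon\downarrow 0$ yields the conclusion.

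The delicate step is the lower bound: the partition-function estimate $Z_n\geq e^{-\varepsilon n}$ for cylinders of a \emph{fixed} length $n$ is not directly usable, because $\{I\in\mathcal{A}_r:|I|=n\}$ may comprise only a tiny fraction of the length-$n$ words, so one cannot pick a single $n$ and compare to $Z_n$. The Carath\'eodory-type divergence $m_P(X,-\varepsilon,g+th,N)\to\infty$, which simultaneously controls every cover by cylinders of length $\geq N$, is precisely the tool that circumvents this obstruction; the upper bound, by contrast, is just the routine sub-cover estimate using that $\mathcal{A}_r$ is refined by the equal-length cylinder partitions.
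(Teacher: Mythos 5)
Your proof is correct and follows essentially the same route as the paper: first establish the uniform comparability $|I|\asymp\log(1/r)$ for $I\in\mathcal{A}_r$ from the strict negativity of $h$, then use the vanishing of pressure through the partition function $Z_n$ for the upper bound $\Gamma_r\lesssim r^{-\varepsilon/m}$, and use the fact that the Carath\'eodory quantity diverges at exponents below $P_X(\sigma,g+th)=0$ for the lower bound $\Gamma_r\gtrsim r^{\varepsilon/m}$. The only differences from the paper are cosmetic: you sum $Z_n$ directly over the admissible length range $[n_1(r),n_2(r)]$ rather than inserting the weight $e^{-\lambda|I|}$ and summing a geometric series, and you invoke $m_P(X,-\varepsilon,g+th)=\infty$ explicitly rather than arguing by contradiction via Bowen's Lemma 2.14 --- but these are the same estimates in slightly different packaging.
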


	\begin{proof}
		Let $r_0=\sup_{x\in X}\exp(h(x))$. Set for $0<r<r_0$, 
		\begin{equation}\label{minimal}
			m(r)=\min\{|I|:I\in \mathcal{A}_r\},\quad M(r)=\max\{|I|:I\in\mathcal{A}_r\}.
		\end{equation}
		From the definition of $\mathcal{A}_r$ and the negativity of $h$, it follows that there exist two positive constants $a,b$ such that 
		\begin{equation}\label{range}
			a\log(\frac{1}{r})\leq m(r)\leq M(r)\leq b\log (\frac{1}{r}),\forall r\in (0,r_0).
		\end{equation}
		In detail, for each $0<r<r_0$ and $i_0...i_{n-1}\in\mathcal{A}_r$, by definition of $\mathcal{A}_r$,
		\begin{align}\label{size}
			\inf_{y\in[i_0...i_{n-2}]\cap X}\exp{S_{n-1}(-h)(y)}\leq \frac{1}{r}<\inf_{x\in[i_0...i_{n-1}]\cap X}\exp{S_{n}(-h)(x)}.
		\end{align}
		By the negativity of $h$, there exists $a_1,a_2>0$ such that $$a_1<-h(x)<a_2$$ for all $x\in X$. Thus $$\frac{1}{r}<\inf_{x\in[i_0...i_{n-1}]\cap X}\exp{S_{n}(-h)(x)}<\exp{na_2}$$ for all $i_0...i_{n-1}\in\mathcal{A}_r$. It follows that $$(\frac{1}{r})^{\frac{1}{a_2}}<\exp{m(r)}.$$
		By (\ref{size}), for each $i_0...i_{n-1}\in\mathcal{A}_r$, $$\frac{1}{r}\geq \exp{S_{n-1}(-h)(y)}+\frac{\exp{(n-1)a_1}-\exp{S_{n-1}(-h)(y)}}{2}>\exp{(n-1)a_1}$$ for some $y\in[i_0...i_{n-2}]\cap X$. Thus $$(\frac{1}{r})^{\frac{1}{a_1}}\geq \exp{(n-1)}$$ for every $i_0...i_{n-1}\in \mathcal{A}_r$. It follows that $$e\times(\frac{1}{r})^{\frac{1}{a_1}}\geq \exp{M(r)}.$$ Take $b>0$ such that $e\leq (\frac{1}{r_0})^{b-\frac{1}{a_1}}$. This finishes the proof of (\ref{range}).
		
		To finish the proof, we only need to show that for any $\varepsilon>0$ and $r>0$ small enough, we have
		\begin{equation*}
			r^{\frac{\varepsilon}{2}}\leq \Gamma_r \leq r^{-\frac{\varepsilon}{2}}.
		\end{equation*}
		We first prove that $\Gamma_r \geq r^{\frac{\varepsilon}{2}}$ for $r>0$ small. Suppose this is not true, then we can find some $r>0$ and $\lambda>0$ such that $Z(r,\lambda)<1$, where $$Z(r,\lambda)=\sum_{I\in\mathcal{A}_r}\exp{(\lambda|I|)}\sup_{x\in[I]\cap X}\exp(S_{|I|}\varphi(x)).$$
		By Lemma 2.14 of \cite{B}, it follows that $P_X(\sigma,\varphi)\leq -\lambda$, which is a contradiction.

		By definition we have 
		$$\lim_{n\rightarrow +\infty}\frac{1}{n}\log(\sum_{I\in X^*:|I|=n}\sup_{x\in[I]\cap X}\exp{(S_{|I|}\varphi(x))})=0.$$
		Hence for any $\lambda>0$, there exists a large $N$ such that $e^{-\lambda N/2}<1-e^{-\lambda/2}$ and for any $n\geq N$,
		$$\gamma_n\triangleq \sum_{I\in X^*:|I|=n}\exp(-\lambda|I|)\sup_{x\in [I]\cap X}\exp{(S_{|I|}\varphi(x))}\leq \exp{(-\lambda n/2)}.$$
		Since $m(r)$ tends to infinity as $r$ tends to $0$, where $m(r)$ is defined in (\ref{minimal}), one can take a small $r_1\in (0,r_0)$ so that $m(r)\geq N$ for any $0<r<r_1$. Hence for any $0<r<r_1$, $$\mathcal{A}_r\subset \{I\in X^*:|I|\geq N\}$$ and so 
		\begin{align*}
			&\sum_{I\in\mathcal{A}_r}\exp{(-\lambda|I|)}\sup_{x\in[I]\cap X}\exp(S_{|I|}\varphi(x)) \\
			\leq &\sum_{n=N}^{\infty}\gamma_n \leq \sum_{n=N}^{\infty}\exp{(-\lambda n/2)}=\frac{e^{-\lambda N/2}}{1-e^{-\lambda/2}}<1.
		\end{align*}

		Fix $\lambda\in (0,\frac{\varepsilon}{2b})$. It follows from (\ref{range}) that $r^{\frac{\varepsilon}{2}}\leq \exp{(-\lambda|I|)}$ for any $I\in \mathcal{A}_r$. Therefore
		$$\sum_{I\in\mathcal{A}_r}\sup_{x\in[I]\cap X}\exp(S_{|I|}\varphi(x))<r^{-\frac{\varepsilon}{2}}.$$
	\end{proof}
	\begin{Rem}\label{new2}
		We conjecture that Proposition \ref{unique root} still holds for a general continuous potential, that is, assume that $(X,\sigma)$ is a two-sided subshift over a finite alphabet $\mathcal{A}$,  $\varphi$ and $h$ are continuous potential functions and $h(x)<0$ for all $x\in X$, then we have 
		\begin{eqnarray*}
			P_X(\sigma,\varphi)=\lim_{r\rightarrow 0}\frac{\log \Gamma_r}{-\log r},
		\end{eqnarray*}
		where $$\Gamma_r=\sum_{I\in \mathcal{A}_r}\sup_{x\in [I]\cap X} \exp{(S_{|I|}\varphi(x))}.$$
	\end{Rem}

	\subsection{Proof of the Main Theorem}
	To prove the main theorem, we utilize two elementary results from \cite{FS}. For $T\in \mathbb{R}^{d\times d}$, let $\alpha_1(T)\geq \cdots \geq \alpha_d(T)$ denote the singular values of $T$. For $s\geq 0$, define the singular value function $\phi^s:\mathbb{R}^{d\times d}\rightarrow [0,\infty)$ as 
	\[\phi^s(T)\triangleq\left\{
	\begin{array}{lll}
		\alpha_1(T)\cdots\alpha_k(T)\alpha_{k+1}^{s-k}(T),  &\quad \mbox{if}~ 0\leq s\leq d;\\
		
		(\det{T})^{\frac{s}{d}}, &\quad \mbox{if}~ s>d,
		
	\end{array}\right.\]
	where $k=[s]$ is the integral part of $s$.
	\begin{Lem}\label{cover}
		Suppose $M$ is a $d-$dimensional Riemannian manifold. Let $E\subset U\subset M$, where $E$ is compact and $U$ is open. Let $k\in \{0,1,...,d-1\}$. Then for any non-degenerate $C^1$ map $f: U\rightarrow M$, there exists $r_0>0$ so that for any $y\in E$, $z\in B(y,r_0)$ and $0<r<r_0$, the set $f(B(z,r))$ can be covered by 
		$$C_M\frac{\phi^k(D_yf)}{(\alpha_{k+1}(D_yf))^k}$$
		many balls of radius $\alpha_{k+1}(D_yf)r$, where $C_M$ is a positive constant depending on $M$.
	\end{Lem}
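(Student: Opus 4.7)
The plan is to reduce the statement to a geometric covering estimate for ellipsoids in Euclidean space. Since $E$ is compact and $M$ is Riemannian, I would first choose finitely many coordinate charts covering $E$ such that, on each chart, the Riemannian distance is comparable to the Euclidean one up to a multiplicative constant depending only on $M$. Inside a single chart I would then treat $f$ as a $C^{1}$ map between open subsets of $\mathbb{R}^d$; any distortion introduced by the chart change, or by the fact that $f(B(z,r))$ is measured in the codomain metric, is absorbed into the constant $C_M$.

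Next I would exploit the $C^{1}$ continuity of $Df$ on a compact neighborhood of $E$: for any $\varepsilon>0$ there exists $r_0>0$ such that whenever $y\in E$ and $w\in B(y,2r_0)$ we have $\|D_w f-D_y f\|\leq\varepsilon$. Non-degeneracy of $f$ means $D_y f$ is invertible for every $y\in E$, so by compactness of $E$ the singular values $\alpha_1(D_y f)\geq\cdots\geq\alpha_d(D_y f)$ are uniformly bounded below. Applying the mean value inequality along segments in $B(z,r)$ I would conclude that, for $z\in B(y,r_0)$ and $0<r<r_0$,
$$f(B(z,r))\subset f(z)+D_y f\bigl(B(0,r)\bigr)+B(0,\varepsilon r),$$
so the task reduces to covering the ellipsoid $\mathcal{E}_y=D_y f(B(0,r))$, inflated by an $\varepsilon r$-neighborhood, by balls of radius $\alpha_{k+1}(D_y f)\,r$.

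I would then perform this ellipsoid covering in an orthonormal basis diagonalising $D_y f^{\top}D_y f$. The semi-axes of $\mathcal{E}_y$ are $\alpha_i(D_y f)\,r$; the last $d-k$ of them already satisfy $\alpha_i(D_y f)\,r\leq\alpha_{k+1}(D_y f)\,r$, while the first $k$ are at least as large. Partitioning each of the first $k$ coordinate directions into intervals of length $\alpha_{k+1}(D_y f)\,r$ produces a grid of rectangular boxes whose total number is at most
$$\prod_{i=1}^{k}\Bigl(\frac{\alpha_i(D_y f)}{\alpha_{k+1}(D_y f)}+1\Bigr)\leq C_M\,\frac{\alpha_1(D_y f)\cdots\alpha_k(D_y f)}{\alpha_{k+1}(D_y f)^{k}}=C_M\,\frac{\phi^{k}(D_y f)}{\alpha_{k+1}(D_y f)^{k}},$$
and each such box has diameter at most $\sqrt{d}\,\alpha_{k+1}(D_y f)\,r$, so is contained in a single ball of radius a dimensional multiple of $\alpha_{k+1}(D_y f)\,r$. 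Enlarging each covering ball by a universal factor absorbs both the $\varepsilon r$ error from the $C^{1}$ approximation and the bi-Lipschitz distortion of the charts.

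The main obstacle is ensuring that the $C^{1}$ approximation error is negligible on the scale of the target radius $\alpha_{k+1}(D_y f)\,r$: since $\alpha_{k+1}(D_y f)$ may be small relative to $\alpha_1(D_y f)$, the pointwise estimate $\|f(z+v)-f(z)-D_y f(v)\|=o(|v|)$ must be made uniform in $y\in E$ and then compared with $\alpha_{k+1}$, which is precisely where non-degeneracy (providing a uniform positive lower bound for $\alpha_{k+1}(D_y f)$ on $E$) together with compactness of $E$ are essential.
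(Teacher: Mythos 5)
The paper does not actually prove Lemma \ref{cover}; it is quoted without proof from Feng--Simon \cite{FS}, where the argument is essentially the one you have outlined (reduce to Euclidean charts, linearize $f$ near a point of the compact set, and cover the resulting ellipsoid with boxes of side $\alpha_{k+1}(D_yf)r$ in the directions of the $k$ largest semi-axes — a covering estimate going back to Falconer). Your reasoning is correct, including the crucial final observation that non-degeneracy plus compactness of $E$ yields a uniform positive lower bound on $\alpha_{k+1}(D_yf)$, which is what makes the $\varepsilon r$ linearization error negligible on the target scale. One small point to tidy: your boxes are contained in balls of radius a dimensional multiple $C_d\,\alpha_{k+1}(D_yf)r$, whereas the statement requires radius exactly $\alpha_{k+1}(D_yf)r$; the fix is not to enlarge the balls but to subdivide each one into $O(C_d^{\,d})$ balls of the required radius, a factor that is absorbed into $C_M$.
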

 The proof of the following proposition is similar to that for one-sided shift space in \cite{FS}. For the completeness of the proof and the convenience of the reader, we give the details. Recall that for a locally maximal hyperbolic set $\Lambda$, there exists a two-sided topological Markov chain $\Sigma_A$ and a continuous surjective map $\pi: \Sigma_A\rightarrow \Lambda$ such that $\pi\circ \sigma=f\circ \pi$. Given $\bold{i}=(i_p)_{p=-\infty}^{\infty}\in \Sigma_A$ and $n\in \mathbb{N}$, denote by $P_{\bold{i}|n}=\bigcap^{n-1}_{j=-\infty}f^{-j}P_{i_j}.$
	\begin{Prop}\label{cover2}
		Let $M$ be a compact Riemannian manifold, $f:M\rightarrow M$ be a $C^1$ diffeomorphism and $\Lambda$ be a locally maximal hyperbolic set for $f$. Denote by $u$ the dimension of the unstable manifold. Let $k\in\{0,1,...,u-1\}$. Set $C_M$ the constant in Lemma \ref{cover}. Then there exists $C_1>0$ such that for $\bold{i}=(i_p)_{p=-\infty}^{\infty}\in \Sigma_A$ and $n\in \mathbb{N}$, the cylinder $P_{\bold{i}|n}$ can be covered by $C_1\prod_{p=0}^{n-1}G(\sigma^p\bold{i})$ balls of radius $\prod_{p=0}^{n-1}H(\sigma^p\bold{i})$, where
		$$G(\bold{i})=\frac{C_M\phi^k((D_{\pi\bold{i}}f)^{-1})}{\alpha_{k+1}((D_{\pi\bold{i}}f)^{-1})^k},\quad\quad H(\bold{i})=\alpha_{k+1}((D_{\pi\bold{i}}f)^{-1}).$$
	\end{Prop}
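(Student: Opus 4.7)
The plan is to iterate Lemma \ref{cover} applied to the $C^1$ diffeomorphism $f^{-1}$ with $E=\Lambda$; let $r_0$ be the resulting constant. First, I would fix a Markov partition $\mathcal{P}$ of sufficiently small diameter $\rho_0<r_0$. The key structural fact driving the proof is that, because $P_{\mathbf{i}|n}$ has its entire past fixed, it lies inside the single local unstable leaf $W^u_{\text{loc}}(\pi\mathbf{i})$; hence each forward image $R_j:=f^j(P_{\mathbf{i}|n})$ lies in a single unstable leaf through $\pi(\sigma^j\mathbf{i})$, on which $f^{-1}$ is a contraction. Moreover, the Markov property forces $R_j\subset P_{i_j}$ for $0\le j\le n-1$, so in particular $R_{n-1}$ is contained in the single ball $B(\pi(\sigma^{n-1}\mathbf{i}),\rho_0)$.

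The iteration is a backward induction on $j$: starting from this single ball covering $R_{n-1}$, for $j$ running from $n-1$ down to $1$, given a cover of $R_j$ by balls $\{B(z_\alpha,r_\alpha)\}$, for each ball meeting $R_j$ I pick a point $y_\alpha\in R_j\cap B(z_\alpha,r_\alpha)\subset\Lambda$ and apply Lemma \ref{cover} to $f^{-1}$ at $y_\alpha$. This covers $f^{-1}(B(z_\alpha,r_\alpha))$, and hence the portion of $R_{j-1}$ inside it, by $C_M\phi^k(D_{y_\alpha}f^{-1})/\alpha_{k+1}(D_{y_\alpha}f^{-1})^k$ balls of radius $\alpha_{k+1}(D_{y_\alpha}f^{-1})\,r_\alpha$. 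Since $D_{y_\alpha}f^{-1}=(D_{f^{-1}(y_\alpha)}f)^{-1}$ with $f^{-1}(y_\alpha)\in R_{j-1}\subset P_{i_{j-1}}$, these two factors equal $G(\sigma^{j-1}\mathbf{i})$ and $H(\sigma^{j-1}\mathbf{i})$ up to a distortion depending on the proximity of $y_\alpha$ to $\pi(\sigma^j\mathbf{i})$. Telescoping these iterations (and absorbing the initial ball's radius, the missing index $p=n-1$, and the constant ball-covering overhead into $C_1$) yields a cover of $P_{\mathbf{i}|n}$ with the asserted product structure.

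The main obstacle is bounding the accumulated distortion uniformly in $\mathbf{i}$ and $n$: the chosen $y_\alpha$ differs from the reference $\pi(\sigma^j\mathbf{i})$ at each step, and a naive per-step bound would compound into an $n$-dependent factor $K^n$. The crucial gain is that all $y_\alpha$ at step $j$ lie in $R_j$, which sits in a single unstable leaf on which $f^{-1}$ contracts at rate at most $\lambda<1$; hence $\mathrm{diam}(R_j)\lesssim\lambda^{n-1-j}\rho_0$. The total logarithmic distortion accumulated across all iterations is therefore dominated by $\sum_{m\ge 0}\omega(\lambda^m\rho_0)$, where $\omega$ is a modulus of continuity for $\log\phi^k(Df^{-1})$ and $\log\alpha_{k+1}(Df^{-1})$ on $\Lambda$; for $\rho_0$ small enough (and with sufficient regularity for this sum to converge, as in the Feng--Simon setting) the total distortion is bounded independently of $n$ and $\mathbf{i}$, and can be folded into $C_1$. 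This interaction between the hyperbolic structure and the $C^1$ regularity of $f$ is the step requiring the most care.
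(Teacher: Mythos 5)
There is a genuine gap, and it stems from not using Lemma \ref{cover} in the way it is designed to be used. You apply the lemma with a \emph{different} base point $y_\alpha$ for each ball $B(z_\alpha,r_\alpha)$ in the cover of $R_j$, which produces counts and radii involving $D_{y_\alpha}f^{-1}$ rather than $D_{\pi(\sigma^j\mathbf{i})}f^{-1}$, and then you need a distortion estimate to replace $y_\alpha$ by the reference point $\pi(\sigma^j\mathbf{i})$. Your proposed fix — summing a modulus of continuity $\omega$ of $\log\phi^k(Df^{-1})$ along the geometric sequence $\lambda^m\rho_0$ — requires $\sum_m\omega(\lambda^m\rho_0)<\infty$, i.e.\ Dini continuity of $Df$. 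For a $C^1$ diffeomorphism (which is the hypothesis of the proposition, and also the Feng--Simon setting you cite) this can fail: $\omega$ is merely $o(1)$, so the sum can diverge and the ``folded into $C_1$'' step is not justified. The parenthetical ``with sufficient regularity for this sum to converge'' is therefore not a minor caveat; it is a hypothesis you do not have.

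The point you are missing is that Lemma \ref{cover} already eliminates the distortion problem: for a single fixed $y\in E$, it covers $f(B(z,r))$ for \emph{every} $z\in B(y,r_0)$ by a number of balls depending only on $D_yf$, not on $z$. So at step $j$ one should apply the lemma once at the \emph{fixed} reference point $y=\pi(\sigma^j\mathbf{i})$, simultaneously to all balls $B(z_\alpha,r_\alpha)$ whose centers lie in $B(\pi(\sigma^j\mathbf{i}),r_0)$. This gives exactly $G(\sigma^{j-1}\mathbf{i})$ and $H(\sigma^{j-1}\mathbf{i})$, on the nose, with no accumulated error. The only thing to check is that the ball centers really do lie in $B(\pi(\sigma^j\mathbf{i}),r_0)$, and this is where the contraction along unstable leaves and the smallness of the Markov cylinders enter. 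The paper's proof organizes this as a forward induction on the cylinder length $n$: one first fixes $n_0$ so that $\mathrm{diam}(P_{\mathbf{i}|n})<r_0/2$ and the inductively-constructed radii are $<r_0/2$ for $n\ge n_0$, absorbs all $n\le n_0$ into the constant $C_1$, and then passes from a cover of $P_{(\sigma\mathbf{i})|m}$ to one of $P_{\mathbf{i}|(m+1)}=f^{-1}P_{(\sigma\mathbf{i})|m}$ by a single application of Lemma \ref{cover} at $y=\pi(\sigma\mathbf{i})$. Your backward induction on $j$ is a legitimate alternative organization, but the derivative must be evaluated at the reference point $\pi(\sigma^j\mathbf{i})$ rather than at the varying $y_\alpha$'s; once you make that change, the distortion discussion disappears and the argument closes in the $C^1$ category.
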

\begin{proof}
	By Lemma \ref{cover}, there exists $r_0>0$ so that for any $y\in \Lambda$, $z\in B(y,r_0)$ and $0<r<r_0$, the set $f^{-1}(B(z,r))$ can be covered by 
	$$C_M\frac{\phi^k(D_yf^{-1})}{(\alpha_{k+1}(D_yf^{-1}))^k}$$
	many balls of radius $\alpha_{k+1}(D_yf^{-1})r$. Since $f$ is expanding on $W^u_{\text{loc}}(x)\cap \Lambda$, there exists $\gamma\in (0,1)$ such that $\sup_{\bold{i}\in \Sigma_A}\alpha_{1}((D_{\pi \bold{i}}f)^{-1})<\gamma$. Then $\sup_{\bold{i}\in \Sigma_A} H(\bold{i})<\gamma$. Since $f^{-1}P_{(\sigma\bold{i})|n}=P_{\bold{i}|(n+1)}$, we have 
	$$\text{diam}(P_{\bold{i}|(n+1)})\leq \gamma \text{diam}(P_{(\sigma \bold{i})|n})$$
	for all $\bold{i}\in \Sigma_A$ and $n\in \mathbb{N}$. Take $n_0\in \mathbb{N}$ large enough such that 
	$$\gamma^{n_0-1}\max\{1,\text{diam}\Lambda\}<\frac{r_0}{2}.$$ Thus $\text{diam}(P_{\bold{i}|n})<\frac{r_0}{2}$ for all $\bold{i}\in \Sigma_A$ and $n\geq n_0$.
	
	Clearly there exists a large number $C_1$ such that the conclusion of the proposition holds for any positive integer $n\leq n_0$ and $\bold {i}\in \Sigma_A$, i.e. the set $P_{\bold{i}|n}$ can be covered by $C_1\prod_{p=0}^{n-1}G(\sigma^p\bold{i})$ balls of radius $\prod_{p=0}^{n-1}H(\sigma^p\bold{i})$. We show this holds for all $n\in \mathbb{N}$ and $\bold{i}\in \Sigma_A$ by induction.
	
	Suppose for some $m\geq n_0$, the conclusion of the proposition holds for any positive integer $n\leq m$ and $\bold{i}\in \Sigma_A$. Then for any given $\bold{i}=(i_p)_{p=-\infty}^{\infty}\in \Sigma_A$, $P_{(\sigma\bold{i})|m}$ can be covered by $C_1\prod_{p=0}^{m-1}G(\sigma^{p+1}\bold{i})$ balls of radius $\prod_{p=0}^{m-1}H(\sigma^{p+1}\bold{i})$. Let $B_1,...B_N$ denote these balls. We assume that $B_j\cap P_{(\sigma\bold{i})|m}\not=\varnothing$ for each $j$. Since 
	\begin{align}
		\prod_{p=0}^{m-1}H(\sigma^{p+1}\bold{i})\leq \gamma^m\leq \gamma^{n_0}<\frac{r_0}{2}
	\end{align}
and 
\begin{align}
	d(\pi(\sigma\bold{i}),B_j\cap P_{(\sigma\bold{i})|m})\leq \text{diam}(P_{(\sigma\bold{i})|m})<\frac{r_0}{2},
\end{align}
so the center of $B_j$ is in $B(\pi (\sigma\bold{i}),r_0)$. Therefore, by Lemma \ref{cover}, $f^{-1}(B_j)$ can be covered by 
\begin{align}
	\frac{C_M\phi^k(D_{\pi (\sigma\bold{i})}f^{-1})}{(\alpha_{k+1}(D_{\pi (\sigma\bold{i})}f^{-1}))^k}=\frac{C_M\phi^k((D_{\pi\bold{i}}f)^{-1})}{\alpha_{k+1}((D_{\pi\bold{i}}f)^{-1})^k}=G(\bold{i})
\end{align}
balls of radius 
\begin{align}
	\alpha_{k+1}(D_{\pi(\sigma\bold{i})}f^{-1})\times \prod_{p=0}^{m-1}H(\sigma^{p+1}\bold{i})=H(\bold{i})\times\prod_{p=0}^{m-1}H(\sigma^{p+1}\bold{i})=\prod_{p=0}^{m}H(\sigma^p\bold{i}).
\end{align}
Since ${P_{\bold{i}|(m+1)}}=f^{-1} P_{(\sigma \bold{i})|m}$, it follows that 
\begin{align}
	P_{\bold{i}|(m+1)}= f^{-1}P_{(\sigma\bold{i})|m}\subset\bigcup_{j=1}^{N} f^{-1}B_j,
\end{align}
hence $P_{\bold{i}|(m+1)}$ can be covered by
\begin{align}
	G(\bold{i})N\leq G(\bold{i})C_1\prod_{p=0}^{m-1}G(\sigma^{p+1}\bold{i})=C_1\prod_{p=0}^{m}G(\sigma^p\bold{i})
\end{align}
balls of radius $\prod_{p=0}^m H(\sigma^p\bold{i})$. Thus the proposition also holds for $n=m+1$ and all $\bold{i}\in \Sigma_A$.
\end{proof}

	The following result gives an explicit expression for the unique root of the Bowen equation.
	\begin{Prop}\label{root2}
		Let $(X,\sigma)$ be a two-sided subshift over a finite alphabet $\mathcal{A}$ and $g,h$ be continuous functions. Assume  $h(x)<0$ for all $x\in X$. Let $r_0=\sup_{x\in X}\exp{(h(x))}$. Set for $0<r<r_0$,
		$$\mathcal{A}_r\triangleq \{i_0...i_{n-1}\in X^\ast:\sup_{x\in[i_0...i_{n-1}]\cap X}\exp{(S_nh(x))}<r\leq \sup_{y\in[i_0...i_{n-2}]\cap X}\exp{(S_{n-1}h(y))}\},$$
		where $X^*$ is the collection of finite words allowed in $X$. Then
		\begin{align}
			\lim_{r\rightarrow 0}\frac{\log \sum_{I\in\mathcal{A}_r}\sup_{x\in[I]\cap X}\exp{(S_{|I|}g(x))}}{\log r}=-t,
		\end{align}
		where $t$ is the unique root of $P_X(\sigma,g+th)=0$.
	\end{Prop}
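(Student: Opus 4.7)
The plan is to deduce the claim from Proposition \ref{unique root} via a one-sided pointwise comparison, and close the remaining gap by a Bowen-type contradiction argument modeled on the proof of that proposition.

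Write $\tilde{\Gamma}_{r}=\sum_{I\in\mathcal{A}_{r}}\sup_{x\in[I]\cap X}\exp(S_{|I|}g(x))$ and $\Gamma_{r}=\sum_{I\in\mathcal{A}_{r}}\sup_{x\in[I]\cap X}\exp(S_{|I|}(g+th)(x))$, so that Proposition \ref{unique root} applied to $(g,h,t)$ gives $r^{\varepsilon/2}\leq\Gamma_{r}\leq r^{-\varepsilon/2}$ for every $\varepsilon>0$ and every small $r$. The conclusion is equivalent to the sandwich $r^{-t+\varepsilon/2}\leq\tilde{\Gamma}_{r}\leq r^{-t-\varepsilon/2}$ for every $\varepsilon>0$ and every sufficiently small $r$. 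The key observation is that the definition of $\mathcal{A}_{r}$ forces $\exp(S_{|I|}h(x))<r$ for every $I\in\mathcal{A}_{r}$ and $x\in[I]\cap X$; exponentiating by $t$ and summing over $I$ yields the one-sided comparisons
\[
\Gamma_{r}\leq r^{t}\tilde{\Gamma}_{r}\ \text{when}\ t\geq 0,\qquad \Gamma_{r}\geq r^{t}\tilde{\Gamma}_{r}\ \text{when}\ t\leq 0.
\]
Combining these with the bounds on $\Gamma_{r}$ from Proposition \ref{unique root} immediately delivers the easy half of the sandwich: $\tilde{\Gamma}_{r}\geq r^{-t+\varepsilon/2}$ when $t\geq 0$, whence $\limsup_{r\to 0}\log\tilde{\Gamma}_{r}/\log r\leq -t$; and $\tilde{\Gamma}_{r}\leq r^{-t-\varepsilon/2}$ when $t\leq 0$, whence $\liminf\geq -t$.

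For the reverse inequality in each case I would adapt the contradiction argument of Step 1 in the proof of Proposition \ref{unique root}. Treating the case $t\geq 0$ (the other is symmetric), assume $\tilde{\Gamma}_{r_{k}}\geq r_{k}^{-t-\varepsilon}$ along a sequence $r_{k}\to 0$; fix $\lambda\in(0,\varepsilon/(2b))$, with $b$ the constant from inequality (\ref{range}) in that proof, and exploit the identity $S_{|I|}g=S_{|I|}(g+th)-tS_{|I|}h$, the uniform bounds $a_{1}|I|\leq-S_{|I|}h(x)\leq a_{2}|I|$, and the length cap $|I|\leq b\log(1/r_{k})$ to convert the hypothesized blow-up of $\tilde{\Gamma}_{r_{k}}$ into the failure $\sum_{I\in\mathcal{A}_{r_{k}}}e^{\lambda|I|}\sup_{x\in[I]\cap X}\exp(S_{|I|}(g+th)(x))>1$. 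By Lemma 2.14 of \cite{B}, this would force $P_{X}(\sigma,g+th)\leq -\lambda<0$, contradicting $P_{X}(\sigma,g+th)=0$.

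The main obstacle is the transfer step of this last argument. The pointwise comparison from the second paragraph is genuinely one-sided: it leverages only the upper bound $\exp(S_{|I|}h(x))<r$ and not a matching lower bound of the form $\exp(S_{|I|}h(x))\geq c r$ for the $x$ realizing the supremum of $\exp(S_{|I|}g)$ on $[I]\cap X$. Such a lower bound is tantamount to a bounded-distortion estimate $|S_{|I|}h(x)-S_{|I|}h(y)|\leq C$ uniformly on each cylinder, and for a two-sided subshift with only continuous $h$ this requires a delicate telescoping argument capitalizing on the fact that, for $x,y$ in a length-$n$ cylinder, the iterates $\sigma^{k}x$ and $\sigma^{k}y$ agree on a window of width $\min(k,n-1-k)$ around the origin, together with the uniform continuity of $h$ on the compact space $X$. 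Delivering this quantitative distortion control, or else bypassing it using the narrow length range $[a\log(1/r),b\log(1/r)]$ of $\mathcal{A}_{r}$ in a smart weighted sum, is the real technical content of the proof.
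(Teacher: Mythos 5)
Your first paragraph is correct and matches the paper's opening move: from $\exp(S_{|I|}h(x))<r$ you get a one-sided comparison between $\Gamma_r$ and $\tilde{\Gamma}_r$, which with Proposition \ref{unique root} yields one of the two required inequalities. But the other direction, which you yourself flag as ``the main obstacle,'' remains a genuine gap: you neither supply nor correctly characterize the needed lower bound on $\exp(S_{|I|}h(x))$. You call for a \emph{uniform} bounded-distortion estimate $|S_{|I|}h(x)-S_{|I|}h(y)|\le C$ on cylinders, which is false for merely continuous $h$. What the paper actually uses is the much weaker, sub-linear estimate of Lemma \ref{Bowen P}: the variation of $S_nh$ over a length-$n$ cylinder is $o(n)$. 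Combined with (\ref{range}), which pins $|I|$ between $a\log(1/r)$ and $b\log(1/r)$ for $I\in\mathcal{A}_r$, this yields $\exp(S_{|I|}h(x))\ge r^{1+\varepsilon'}$ for every $\varepsilon'>0$ and $r$ small, hence the two-sided pointwise comparison $r^{t+\varepsilon/2}\Theta_r\le\Gamma_r\le r^{t-\varepsilon/2}\Theta_r$; Proposition \ref{unique root} then finishes the proof in one line. Your ``delicate telescoping'' remark is precisely how Lemma \ref{Bowen P} is proved, but you neither cite the lemma nor carry out the telescoping, and you state its conclusion too strongly. Once that estimate is in hand no contradiction argument is needed; as sketched yours is also miswired, since in Lemma 2.14 of \cite{B} it is a covering sum being $<1$, not $>1$, that forces $P_X(\sigma,\varphi)\le -\lambda$.
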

	To prove the proposition, we need the following lemma, which is well-known. For a proof of the one-sided subshift, one can refer to \cite{FS}.
	\begin{Lem}\label{Bowen P}
		Let $(X,\sigma)$ be a two-sided subshift over a finite alphabet $\mathcal{A}$ and $f$ be a continuous function. Then 
		\begin{align*}
			\lim_{n\rightarrow \infty}\frac{1}{n}\sup\{|S_nf(x)-S_nf(y)|:x_i=y_i ~\text{for all} ~i\leq n\}=0.
		\end{align*} 
	\end{Lem}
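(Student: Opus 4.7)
The plan is to leverage uniform continuity of $f$ on the compact subshift $X$ together with the explicit exponentially decaying form of the symbolic metric $d(w,w')=\sum_{j\in\mathbb{Z}}e^{-|j|}|i_j-i'_j|$. The key observation is that coordinate agreement up to index $n$ translates, after $k$ shifts, into coordinate agreement up to index $n-k$, and hence into exponentially small distance between $\sigma^k x$ and $\sigma^k y$ when $k$ is not too close to $n-1$; a standard Ces\`aro splitting then finishes the job.

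First I would establish the metric estimate. If $x_i=y_i$ for all $i\le n$, then $(\sigma^k x)_j=x_{j+k}=y_{j+k}=(\sigma^k y)_j$ whenever $j+k\le n$, i.e., for all $j\le n-k$. Hence for any $0\le k\le n-1$ (so that $n-k\ge 1$ and $|j|=j$ on the region of disagreement),
\[
d(\sigma^k x,\sigma^k y)\ \le\ (l-1)\sum_{j=n-k+1}^{\infty}e^{-j}\ =\ \frac{l-1}{1-e^{-1}}\,e^{-(n-k+1)}\ =:\ C_l\,e^{-(n-k)},
\]
where $l=|\mathcal{A}|$.

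Next I would invoke uniform continuity. Since $X$ is compact and $f$ is continuous, given $\varepsilon>0$ there exists $\delta>0$ with $d(u,v)<\delta\Rightarrow|f(u)-f(v)|<\varepsilon$. Fix $K=K(\varepsilon)$ so that $C_l e^{-K}<\delta$; then for every admissible pair $x,y$ and every $k$ with $0\le k\le n-K$, one has $|f(\sigma^k x)-f(\sigma^k y)|<\varepsilon$.

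Finally I would split the Birkhoff difference as
\[
|S_n f(x)-S_n f(y)|\ \le\ \sum_{k=0}^{n-K}|f(\sigma^k x)-f(\sigma^k y)|+\sum_{k=n-K+1}^{n-1}|f(\sigma^k x)-f(\sigma^k y)|\ \le\ (n-K+1)\varepsilon+(K-1)\cdot 2\|f\|_\infty,
\]
take the supremum over all admissible pairs $x,y$, divide by $n$, let $n\to\infty$ (yielding $\limsup\le\varepsilon$), then let $\varepsilon\to 0$. There is no serious obstacle: the argument is just Ces\`aro averaging on top of uniform continuity. The only bookkeeping concern is that $K$ depends on $\varepsilon$ but is chosen \emph{before} $n\to\infty$, so the leftover block of $K-1$ ``bad'' iterates contributes $O(1/n)$ after dividing and is harmless.
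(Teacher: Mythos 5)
Your proof is correct, and it is the standard argument for this well-known fact: agreement on coordinates $i\le n$ forces $\sigma^k x$ and $\sigma^k y$ to be exponentially close in the symbolic metric for $k$ not too near $n$, uniform continuity of $f$ turns this into smallness of $|f(\sigma^k x)-f(\sigma^k y)|$ on all but a bounded tail block of indices, and the Ces\`aro split kills the tail after dividing by $n$. The paper does not actually supply a proof of Lemma~\ref{Bowen P} --- it only calls it well-known and points to the one-sided version in the cited reference --- so there is nothing substantive to compare against; what you wrote is precisely the argument being implicitly invoked.
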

	\begin{proof}[Proof of Proposition \ref{root2}]
		For $\varepsilon >0$ and $r>0$ small enough, by Proposition \ref{unique root}, we have $$r^{\frac{\varepsilon}{2}}\leq \Gamma_r\leq r^{-\frac{\varepsilon}{2}},$$ where
		$\Gamma_r=\sum_{I\in\mathcal{A}_r}\sup_{x\in[I]\cap X}\exp{(S_{|I|}(g+th)(x))}$. By Lemma \ref{Bowen P} and (\ref{range}), we have $$r^{t+\frac{\varepsilon}{2}}\Theta_r\leq \Gamma_r \leq r^{t-\frac{\varepsilon}{2}}\Theta_r,$$ where $\Theta_r=\sum_{I\in\mathcal{A}_r}\sup_{x\in[I]\cap X}\exp{(S_{|I|}g(x))}$. Thus we have $$r^{-t+\varepsilon}\leq \Theta_r \leq r^{-t-\varepsilon}.$$
		This finishes the proof.
	\end{proof}

	We use Proposition \ref{cover2} to estimate the upper box dimension of $W^u_{\text{loc}}(x)\cap \Lambda$.
	\begin{Prop}\label{bound 1}
		Let $k\in \{0,1,...,u-1\}$. Let $G,H:\Sigma_A\rightarrow \mathbb{R}$ be defined as in Proposition \text{\ref{cover2}}. Let $t$ be the unique root of 
		\begin{equation*}
			P(\sigma|_{\Sigma_A},\log G+t\log H)=0.
		\end{equation*}
		Then for any $x\in \Lambda$, we have $\overline{dim}_B  W^u_{\text{loc}}(x)\cap \Lambda \leq t$.
	\end{Prop}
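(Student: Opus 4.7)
The plan is to directly convert the geometric ball-cover produced by Proposition \ref{cover2} into an $r$-ball cover of $W^u_{\mathrm{loc}}(x)\cap\Lambda$, count the balls via the symbolic sum analysed in Proposition \ref{root2}, and read off the box-dimension bound. Set $g:=\log G$ and $h:=\log H$ on $\Sigma_A$. Both are continuous (since $\pi$, $Df$, and singular values all depend continuously on their arguments), and the proof of Proposition \ref{cover2} already established $\sup_{\mathbf{i}\in\Sigma_A}H(\mathbf{i})<\gamma<1$, so $h<0$ uniformly. The hypotheses of Propositions \ref{unique root} and \ref{root2} are therefore satisfied for the pair $(g,h)$, and there is a unique $t$ with $P_{\Sigma_A}(\sigma,g+th)=0$ satisfying
$$\lim_{r\to 0}\frac{\log\Theta_r}{-\log r}\;=\;t,\qquad \Theta_r\;:=\;\sum_{I\in\mathcal{A}_r}\sup_{\mathbf{j}\in[I]}\exp\bigl(S_{|I|}g(\mathbf{j})\bigr).$$

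Next, fix $r>0$ small and form $\mathcal{A}_r$ from $h=\log H$. Since $h<0$ on the compact space $\Sigma_A$, every $\mathbf{i}\in\Sigma_A$ has a unique prefix of length $n(\mathbf{i},r)$ lying in $\mathcal{A}_r$, so the family $\{\pi([I])\}_{I\in\mathcal{A}_r}$ covers $\Lambda$, and a fortiori $W^u_{\mathrm{loc}}(x)\cap\Lambda$. For each $I=i_0\cdots i_{n-1}\in\mathcal{A}_r$ and any representative $\mathbf{j}\in[I]$, Proposition \ref{cover2} covers $P_{\mathbf{j}|n}$ by at most $C_1\exp(S_n g(\mathbf{j}))$ balls of radius $\exp(S_n h(\mathbf{j}))\le r$. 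The Markov property (3) of $\mathcal{P}$ identifies $\pi([I])=\bigcap_{j=0}^{n-1}f^{-j}P_{i_j}$ as a single Markov cylinder; applying Proposition \ref{cover2} with any choice of representative $\mathbf{j}(I)$ therefore yields a cover of $\pi([I])$ by at most $C_1\sup_{\mathbf{j}\in[I]}\exp(S_n g(\mathbf{j}))$ balls of radius at most $r$, after absorbing a bounded-distortion factor into $C_1$. Summing over $I\in\mathcal{A}_r$,
$$N\bigl(W^u_{\mathrm{loc}}(x)\cap\Lambda,\,r\bigr)\;\le\;N(\Lambda,r)\;\le\;C\cdot\Theta_r.$$

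Combining this with the asymptotic for $\Theta_r$ supplied by Proposition \ref{root2} gives
$$\overline{\dim}_B\bigl(W^u_{\mathrm{loc}}(x)\cap\Lambda\bigr)\;\le\;\limsup_{r\to 0}\frac{\log(C\,\Theta_r)}{-\log r}\;=\;t,$$
which is the claim. The main technical obstacle is ensuring that the ball-cover of a single $P_{\mathbf{j}|n}$ delivered by Proposition \ref{cover2} transfers to a ball-cover of the whole set $\pi([I])=\bigcup_{\mathbf{j}\in[I]}P_{\mathbf{j}|n}$ with only a multiplicative constant loss; equivalently, one needs bounded oscillation of $S_n g$ and $S_n h$ over two-sided cylinders $[I]$. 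This is more delicate than in the one-sided setting of \cite{FS} because a two-sided cylinder does not shrink in the metric without constraint on the past. The way around it is to exploit the local product structure of the Markov partition together with Lemma \ref{Bowen P} to bound the oscillation by a subexponential factor in $n$, which is absorbed when one passes to $\log(\cdot)/(-\log r)$ and lets $r\to 0$.
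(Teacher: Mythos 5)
Your high-level plan (produce an $r$-cover indexed by $\mathcal{A}_r$ via Proposition \ref{cover2}, count with $\Theta_r$, read off the exponent via Proposition \ref{root2}) is the paper's plan, but the execution has a genuine gap where you pass from the two-sided cylinder to the forward cylinder. Proposition \ref{cover2} covers $P_{\mathbf{j}|n}=\bigcap_{p=-\infty}^{n-1}f^{-p}P_{j_p}$, a set with the \emph{entire} past fixed, hence a piece of a single local unstable leaf. You then assert that the same count, up to a bounded-distortion factor, covers $\pi([I])=\bigcap_{p=0}^{n-1}f^{-p}P_{i_p}=\bigcup_{\mathbf{j}\in[I]}P_{\mathbf{j}|n}$. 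These are very different sets: $\pi([I])$ is a forward Markov cylinder with stable extent comparable to $\operatorname{diam}P_{i_0}$, independent of $n$. Covering its stable cross-section by balls of radius $\le r$ costs on the order of $r^{-\dim E^s}$ additional balls, a factor that diverges as $r\to 0$ and cannot be absorbed into $C_1$. Lemma \ref{Bowen P} does not help here: it controls the oscillation of $S_n g$ and $S_n h$ over $[I]$, not the number of $r$-balls needed to cover the stable direction. Consequently the chain $N(W^u_{\text{loc}}(x)\cap\Lambda,r)\le N(\Lambda,r)\le C\Theta_r$ fails at its second inequality; were it valid, it would prove $\overline{\dim}_B\Lambda\le t$, which is not the claim and is generally false since the box dimension of the full hyperbolic set also accounts for the stable direction.

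The paper avoids this entirely by never covering $\Lambda$. One fixes $x\in\Lambda$, which fixes an infinite past $(\ldots i_{-2}i_{-1})$ in the coding; for each $I\in\mathcal{A}_r$ admissible after that past, one takes the \emph{single} two-sided cylinder $P_{\mathbf{i}(I)|n}$ whose past is that of $x$ and whose first $n$ future symbols are $I$. These sets lie inside $W^u_{\text{loc}}(x)\cap\Lambda$, their union covers $W^u_{\text{loc}}(x)\cap\Lambda$, and Proposition \ref{cover2} applies directly to each, giving $N(W^u_{\text{loc}}(x)\cap\Lambda,r)\le C_1\sum_{I\in\mathcal{A}_r}\exp\bigl(\sup_{\mathbf{j}\in[I]}S_{|I|}g(\mathbf{j})\bigr)$. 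With that replacement, the remainder of your argument (continuity and negativity of $h$, and the invocation of Proposition \ref{root2}) goes through as you wrote it.
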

	\begin{proof}
		Let $g=\log G$ and $h=\log H$. Define 
		\begin{equation*}
			r_{\text{min}}=\min_{\bold{i}\in\Sigma_A} h(\bold{i}).
		\end{equation*}
		Then $r_{\text{min}}>0$. For $0<r<r_{\text{min}}$, define 
		$$\mathcal{A}_r\triangleq \{i_0...i_{n-1}\in \Sigma_A^\ast:\sup_{x\in[i_0...i_{n-1}]\cap \Sigma_A}\exp{(S_nh(x))}<r\leq \sup_{y\in[i_0...i_{n-2}]\cap \Sigma_A}\exp{(S_{n-1}h(y))}\},$$
		where $\Sigma_A^*$ denotes the set of all finite words allowed in $\Sigma_A$. By Proposition \ref{cover2}, there exists $C_1>0$ such that for any $0<r<r_{\text{min}}$, every $I\in\mathcal{A}_r$ and $x\in[I]\cap\Sigma_A$, $P_I$ can be covered by
		$$C_1 \exp{(S_{|I|}g(x))}\leq C_1 \exp{(\sup_{y\in[I]\cap\Sigma_A}S_{|I|}g(y))}$$
		many balls of radius
		$$\exp{(S_{|I|}h(x))}\leq \exp{(\sup_{y\in[I]\cap\Sigma_A}S_{|I|}g(y)})<r.$$
		It follows that $W^u_{\text{loc}}(x)\cap \Lambda$ can be covered by 
		$$C_1\sum_{I\in\mathcal{A}_r}\exp{(\sup_{y\in[I]\cap\Sigma_A}S_{|I|}g(y))}$$ 
		many balls of radius $r$. Hence by Proposition \ref{root2},
		\begin{equation*}
			\overline{\text{dim}}_B  W^u_{\text{loc}}(x)\cap \Lambda\leq \limsup_{r\rightarrow 0} \frac{\sum_{I\in\mathcal{A}_r}\exp(\sup_{y\in[I]\cap\Sigma_A}g(y))}{\log(\frac{1}{r})}=t.
		\end{equation*}
	\end{proof}
	The following lemma allows us to compare the entropy on the symbolic space and that on the hyperbolic set. See \cite{FS} for the proof.
	\begin{Lem}\label{entropy}
		Let $X_i,i=1,2$ be compact metric spaces and let $T_i:X_i\rightarrow X_i$ be continuous. Suppose $\pi:X_1\rightarrow X_2$ is a continuous surjection such that the following diagram commutes: $\pi\circ T_1=T_2\circ\pi$. Then $\pi_{\ast}:\mathcal{M}_{inv}(T_1)\rightarrow \mathcal{M}_{inv}(T_2)$ (defined by $\mu\rightarrow \mu\circ\pi^{-1}$) is surjective. If furthermore there is an integer $q>0$ so that $\pi^{-1}(y)$ has at most $q$ elements for each $y\in X_2$, then 
		$$h_{\mu}(T_1)=h_{\mu\circ\pi^{-1}}(T_2)$$
		for each $\mu\in \mathcal{M}_{inv}(T_1)$.
	\end{Lem}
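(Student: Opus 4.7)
The proof splits naturally into two parts corresponding to the two claims. For the surjectivity of $\pi_*$ on invariant measures, my plan is first to produce some (not necessarily invariant) probability measure $\tilde\mu$ on $X_1$ with $\pi_*\tilde\mu = \nu$, then to upgrade this to an invariant lift by a Krylov--Bogolyubov averaging argument. A lift $\tilde\mu$ can be obtained from a Borel measurable section $s\colon X_2\to X_1$ of $\pi$ (which exists by the Kuratowski--Ryll-Nardzewski selection theorem, since $X_1, X_2$ are compact metric and $\pi$ is a continuous surjection) by setting $\tilde\mu = s_*\nu$. Then $\mu_n \triangleq \frac{1}{n}\sum_{k=0}^{n-1}(T_1^k)_*\tilde\mu$ satisfies $\pi_*\mu_n = \nu$ for every $n$, because $\pi\circ T_1 = T_2\circ\pi$ and $\nu$ is $T_2$-invariant. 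By weak-$*$ compactness of the space of Borel probability measures on $X_1$, a subsequence $\mu_{n_j}$ converges weak-$*$ to some $\mu$, which is $T_1$-invariant by the standard Krylov--Bogolyubov argument and satisfies $\pi_*\mu = \nu$ by weak-$*$ continuity of $\pi_*$.

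For the entropy equality, my plan is to use the Abramov--Rokhlin factor formula
$$h_\mu(T_1) \;=\; h_{\pi_*\mu}(T_2) \;+\; h_\mu\bigl(T_1 \,\big|\, \pi^{-1}\mathcal{B}_{X_2}\bigr),$$
reducing the problem to showing that the fiber (conditional) entropy on the right vanishes. For a finite Borel partition $\xi$ of $X_1$, disintegrate $\mu$ along $\pi$ as $\mu = \int \mu_y\,d(\pi_*\mu)(y)$; each $\mu_y$ is supported on $\pi^{-1}(y)$, which by hypothesis has at most $q$ points, so any partition of $X_1$ restricted to such a fiber has at most $q$ non-empty atoms. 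Writing $\xi_0^{n-1} \triangleq \bigvee_{k=0}^{n-1} T_1^{-k}\xi$, this gives $H_{\mu_y}(\xi_0^{n-1}) \leq \log q$ for $\pi_*\mu$-a.e.\ $y$, whence
$$H_\mu\bigl(\xi_0^{n-1}\,\big|\, \pi^{-1}\mathcal{B}_{X_2}\bigr)\;=\;\int H_{\mu_y}(\xi_0^{n-1})\,d(\pi_*\mu)(y)\;\leq\;\log q,$$
uniformly in $n$. Dividing by $n$ and passing to the limit yields $h_\mu(T_1,\xi\mid \pi^{-1}\mathcal{B}_{X_2}) = 0$, and taking the supremum over $\xi$ gives the desired vanishing of the fiber entropy.

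The main obstacle is less one of dynamics than of measure theory: one needs a Borel cross-section for the surjectivity part, and the existence and measurability of the disintegration $\{\mu_y\}$ for the entropy part, both of which are standard consequences of the Polish structure of $X_1, X_2$. It is worth emphasizing that the \emph{uniform} bound $q$ on fiber cardinality (rather than mere almost-sure finiteness) is what allows $H_\mu(\xi_0^{n-1}\mid \pi^{-1}\mathcal{B}_{X_2})$ to be bounded independently of $n$, and this independence is what drives the conclusion $h_\mu(T_1) = h_{\pi_*\mu}(T_2)$; without it, the fiber entropy term need not vanish.
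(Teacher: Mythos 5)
The paper does not supply its own proof of this lemma --- it simply refers the reader to \cite{FS} --- so there is no in-paper argument to compare yours against. Your proof is correct. The surjectivity step (Borel section, push forward, Ces\`aro-average and pass to a weak-$*$ limit, using $\pi\circ T_1=T_2\circ\pi$ to keep the pushforward equal to $\nu$ throughout) is the standard Krylov--Bogolyubov lifting argument, and the entropy step (the relative entropy identity $h_\mu(T_1)=h_{\pi_*\mu}(T_2)+h_\mu(T_1\mid\pi^{-1}\mathcal{B}_{X_2})$ together with the disintegration bound $H_{\mu_y}(\xi_0^{n-1})\le\log q$, hence $\frac1n H_\mu(\xi_0^{n-1}\mid\pi^{-1}\mathcal{B}_{X_2})\le\frac{\log q}{n}\to 0$) is a clean, standard way to kill the fiber entropy. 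An equivalent route, which is probably what \cite{FS} has in mind, is to invoke the Ledrappier--Walters relativised variational principle, which gives $h_\mu(T_1)\le h_{\pi_*\mu}(T_2)+\int h(T_1,\pi^{-1}(y))\,d(\pi_*\mu)(y)$; each $\pi^{-1}(y)$ is a finite set, so its Bowen topological entropy vanishes and the inequality collapses, while the reverse inequality is automatic for factor maps. Your approach and that one are two faces of the same conditional-entropy computation.

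One small correction to your closing remark: the \emph{uniform} bound $q$ is what makes your particular estimate $H_\mu(\xi_0^{n-1}\mid\pi^{-1}\mathcal{B}_{X_2})\le\log q$ work with no further ado, but it is not actually necessary for the conclusion. Your own disintegration inequality already gives $H_\mu(\xi_0^{n-1}\mid\pi^{-1}\mathcal{B}_{X_2})\le\int\log\#\pi^{-1}(y)\,d(\pi_*\mu)(y)$ uniformly in $n$, so integrability of $\log\#\pi^{-1}(y)$ would already suffice; and via Ledrappier--Walters even $\nu$-a.e.\ finiteness of the fibers suffices, since $h(T_1,K)=0$ for any finite set $K$. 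So it is not accurate to say that without the uniform bound the fiber entropy term ``need not vanish.'' This does not affect the validity of your proof of the lemma as stated, since the uniform bound is assumed there.
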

	Next we give the proof of the Main Theorem. This is inspired by the technique in \cite{BCH,FS}. 
	\begin{proof}[Proof of Theorem \ref{Main A}]
		Denote by $s^*$ the unique root of $P^u(f|_\Lambda,\Phi_f(s))=0$. By Theorem \ref{equal}, $s^*$ is also the root of $P(f|_{\Lambda},\Phi_f(s))=0$. Assume that $s^*<u$, where $u$ is the dimension of the unstable manifold, otherwise there is nothing to prove. Set $k=[s^*]$, namely the largest integer less than or equal to $s^*$. Since the factor map $\pi: \Sigma_{A} \rightarrow \Lambda$ is onto and finite-to-one, $m\rightarrow m\circ \pi^{-1}$ is a surjective map from $\mathcal{M}_{inv}(\Sigma_A,\sigma^n)$ to $\mathcal{M}_{inv}(\Lambda,f^n)$ for each $n\in\mathbb{N}$ and moreover, by Lemma \ref{entropy}, one has $h_m(\sigma^n)=h_{m\circ \pi^{-1}}(f^n)$ for $m\in \mathcal{M}_{inv}(\Sigma_A,\sigma^n)$. 

		For each $n\in\mathbb{N}$, $\Lambda$ is also a hyperbolic set for $f^{2^n}$. By the variational principle, for each $s\in [0,u]$, one has 
		\begin{align*}
			&P(f^{2^n}|_\Lambda,-\varphi^s(\cdot,f^{2^n}))\\
			=&\sup\{h_{\mu}(f^{2^n})+\int-\varphi^s(\cdot,f^{2^n})d\mu :\mu\in\mathcal{M}_{inv}(\Lambda,f^{2^n})\}\\
			=&\sup\{h_{m\circ\pi^{-1}}(f^{2^n})+\int-\varphi^s(\cdot,f^{2^n})d m\circ\pi^{-1}:m\in\mathcal{M}_{inv}(\Sigma_A,\sigma^{2^n})\}\\
			=&\sup\{h_m(\sigma^{2^n})+\int \log G_{2^n} +s\log H_{2^n}d m:m\in\mathcal{M}_{inv}(\Sigma_A,\sigma^{2^n})\}\\
			=&P(\sigma^{2^n}|_{\Sigma_A},\log G_{2^n} +s\log H_{2^n}),	
		\end{align*}
		where $G_{2^n}(\bold{i})=\frac{C\phi^k((D_{\pi \bold{i} }f^{2^n})^{-1})}{\alpha_{k+1}((D_{\pi \bold{i}}f^{2^n})^{-1})^k}$ and $H_{2^n}(\bold{i})=\alpha_{k+1}((D_{\pi \bold{i}}f^{2^n})^{-1})$. 
		By Proposition \ref{bound 1}, we have 
		$$	\overline{\text{dim}}_B  W^u_{\text{loc}}(x,f^{2^n}) \cap \Lambda\leq t_n,$$
		where $t_n$ is the unique root of 
		$$P(f^{2^n}|_\Lambda,-\varphi^s(\cdot,f^{2^n}))=0.$$
		One can show that $t_n\geq t_{n+1}$.  In detail, for $\ell\in\mathbb{N}$, set $\phi^s_\ell=-\varphi^s(\cdot,f^\ell)$.  Recall that 
		$$P_n(\phi^s_\ell,\varepsilon)=\sup\{\sum_{x\in E}\exp{S_n\phi^s_\ell(x)}: E \text{~is an }(n,\varepsilon)\text{-separated subset of~} M\}.$$
		For any $\varepsilon>0$, by the uniform continuity of $f$, there exists $\delta>0$ such that if $E\subset M$ is an ($n,\varepsilon$)-separated set of $f^{2^{\ell+1}}$, then $E$ is a ($2n,\delta$)-separated set of $f^{2^\ell}$ and $\delta\rightarrow 0$ when $\varepsilon \rightarrow 0$. By the sub-additivity of $\phi_\ell^s$, the Birkhoff sum $S_n\phi^s_{2^{\ell+1}}$ of $\phi^s_{2^{\ell+1}}$ with respect to $f^{2^{\ell+1}}$ has the following property:
		\begin{align*}
			S_n\phi^s_{2^{\ell+1}}(x)&=\phi^s_{2^{\ell+1}}(x)+\phi^s_{2^{\ell+1}}(f^{2^{\ell+1}}(x))+\cdots+\phi^s_{2^{\ell+1}}(f^{2^{\ell+1}(n-1)}(x))\\
			&\leq \phi^s_{2^{\ell}}(x)+\phi^s_{2^{\ell}}(f^{2^{\ell}}(x))+\phi^s_{2^{\ell}}(f^{2^{\ell+1}}(x))+\phi^s_{2^{\ell}}(f^{2^{\ell+1}}(f^{2^\ell}(x)))+\\
			&\cdots+\phi^s_{2^{\ell}}(f^{2^{\ell+1}(n-1)}(x))+\phi^s_{2^{\ell}}(f^{2^{\ell+1}(n-1)}(f^{2^\ell}(x)))\\
			&=S_{2n}\phi^s_{2^\ell}(x),
		\end{align*}
		where $S_{2n}\phi^s_{2^\ell}(x)$ is the Birkhoff sum of $\phi^s_{2^\ell}$ with respect to $f^{2^\ell}$. Thus 
		$$P_n(f^{2^{\ell+1}}|_{\Lambda},\phi^s_{2^{\ell+1}},\varepsilon)\leq P_{2n}(f^{2^{\ell}}|_{\Lambda},\phi^s_{2^{\ell}},\delta).$$
		Hence, $$P(f^{2^{\ell+1}}|_{\Lambda},\phi^s_{2^{\ell+1}})\leq 2P(f^{2^\ell}|_{\Lambda},\phi^s_{2^\ell}).$$
		Take $s=t_{\ell+1}$, then we have
		$$0=P(f^{2^{\ell+1}}|_{\Lambda},\phi^{t_{\ell+1}}_{2^{\ell+1}})\leq 2P(f^{2^\ell}|_{\Lambda},\phi^{t_{\ell+1}}_{2^\ell}).$$
		It follows that  $t_\ell\geq t_{\ell+1}$. Thus the limitation $t^*=\lim_{n\rightarrow \infty} t_n$ exists.

		Next we show that $t^*$ is the unique root of $P(f|_\Lambda,\Phi_f(s))=0$.
		By the continuity of $P(f|_\Lambda,\Phi_f(s))$ with respect to $s$, it holds that 
		$$P(f|_\Lambda,\Phi_f(t^*))=\lim_{n\rightarrow \infty} P(f|_\Lambda,\Phi_f(t_n)).$$
		By Proposition \ref{topo pressure appro}, for each $n\in \mathbb{N}$, it holds that 
		$$P(f|_\Lambda,\Phi_f(t_n))=\lim_{\ell\rightarrow\infty}\frac{1}{2^\ell}P(f^{2^\ell}|_\Lambda,-\varphi^{t_n}(\cdot,f^{2^\ell})).$$
		By the monotonicity of $t_n$, for each $n\in\mathbb{N}$, it holds that 
		$$\lim_{\ell\rightarrow\infty}\frac{1}{2^\ell}P(f^{2^\ell}|_\Lambda,-\varphi^{t_n}(\cdot,f^{2^\ell}))\leq 0.$$
		It follows that $P(f|_\Lambda,\Phi_f(t_n))\leq 0$ and hence $P(f|_\Lambda,\Phi_f(t^*))\leq 0$. On the other hand, 
		$$P(f|_\Lambda,\Phi_f(t^*))=\lim_{\ell\rightarrow\infty}\frac{1}{2^\ell}P(f^{2^\ell}|_\Lambda,-\varphi^{t^*}(\cdot,f^{2^\ell}))\geq 0.$$
		Thus $t^*$ is the root of the equation $P(f|_\Lambda,\Phi_f(s))=0$. By Theorem \ref{equal}, $t^*$ is also the root of $P^u(f|_\Lambda,\Phi_f(s))=0$. This completes the proof.
	\end{proof}
	\section{Appendix}\label{appendix}
	In this section, we give an upper bound of the Hausdorff dimension of the hyperbolic set on unstable manifold, which partially inspires the main theorem in this paper.
	
	Given a $C^1$ diffeomorphism $f$ and a hyperbolic set $\Lambda$ for $f$. We still denote $\varphi^{s}(\cdot, f^{n})$ for $s\in[0,u]$ as the singular-valued function as that in Section \ref{Preliminary}. Consider the following topological pressure function
	$$P_{n}^{u}(s)\triangleq P(f|_{\Lambda,}-\frac{1}{n}\varphi^{s}(\cdot, f^{n})), \quad s\in[0,u].$$
	
	By the sub-additivity of $\{-\varphi^{s}(\cdot, f^{n})\}_{n\in\mathbb{N}}$, Zhang \cite{Zh} proved the following result:
	\begin{Lem}
		For every $s\in [0,u]$, the following limitation exists
		$$\lim_{n\rightarrow \infty} P_{n}^{u}(s)=\inf_{n\in\mathbb{N}} P_{n}^{u}(s).$$
		If we denote $$P^{\ast}(s)=\lim_{n\rightarrow \infty} P_{n}^{u}(s),$$ then $P^{\ast}(s)$ is continuous and decreasing with $s$. $P^{\ast}(0)=h_{\text{top}}(f|_{\Lambda})\geq 0$.
	\end{Lem}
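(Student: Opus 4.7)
The plan is to handle the three claims of the lemma separately: existence of the limit together with its identification as the infimum, then monotonicity and continuity of $s \mapsto P^{\ast}(s)$, and finally the boundary value at $s=0$.

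For the first claim, the strategy is to reduce to Fekete's lemma by proving that $a_n := n P_n^u(s)$ is sub-additive in $n$. Starting from the sub-additivity of $\{-\varphi^s(\cdot, f^n)\}_{n\geq 1}$ recalled in Section \ref{Preliminary}, namely
\[
-\varphi^s(x, f^{m+n}) \leq -\varphi^s(x, f^m) - \varphi^s(f^m x, f^n),
\]
I would apply the (additive) variational principle to $P_{m+n}^u(s) = P(f|_\Lambda, -\tfrac{1}{m+n}\varphi^s(\cdot, f^{m+n}))$ and use the $f$-invariance of any test measure to rewrite $\int \varphi^s(f^m\cdot, f^n)\, d\mu = \int \varphi^s(\cdot, f^n)\, d\mu$. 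Splitting $h_\mu(f) = \tfrac{m}{m+n}h_\mu(f) + \tfrac{n}{m+n}h_\mu(f)$ and pulling the supremum termwise gives
\[
P_{m+n}^u(s) \leq \tfrac{m}{m+n}P_m^u(s) + \tfrac{n}{m+n}P_n^u(s),
\]
equivalently $a_{m+n} \leq a_m + a_n$. Fekete's lemma then yields $\lim_{n\to\infty} P_n^u(s) = \inf_{n} P_n^u(s)$.

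For monotonicity in $s$: since the unstable bundle is uniformly expanding, for every ergodic $f$-invariant measure $\mu$ supported on $\Lambda$ the Lyapunov exponents along $E^u$ are strictly positive, say $\chi_1^u(\mu) \geq \cdots \geq \chi_u^u(\mu) > 0$. The sub-additive ergodic theorem identifies
\[
\Phi_f(s)_{\ast}(\mu) = -\sum_{i=u-[s]+1}^{u} \chi_i^u(\mu) - (s-[s])\,\chi_{u-[s]}^u(\mu),
\]
which is manifestly non-increasing in $s$. Combined with the sub-additive variational principle (Theorem \ref{variational principle}) and with the identity $P^{\ast}(s) = P(f|_\Lambda, \Phi_f(s))$ obtained from the first part, this gives the monotonicity of $P^{\ast}$. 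For continuity, I would establish a uniform Lipschitz bound: since $\tfrac{1}{n}|\log \alpha_i(x, f^n)| \leq \log\|Df\|_{C^0}$, the direct estimate yields $|P_n^u(s_1) - P_n^u(s_2)| \leq \log\|Df\|_{C^0}\cdot |s_1 - s_2|$ uniformly in $n$, which passes to the limit and gives Lipschitz continuity of $P^{\ast}$ on $[0, u]$.

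The final assertion is immediate: at $s=0$ the singular-value function $\varphi^0$ vanishes identically, so $P_n^u(0) = P(f|_\Lambda, 0) = h_{\text{top}}(f|_\Lambda)$ for every $n$, and topological entropy is non-negative. The main technical point I expect to encounter is the formal verification of the sub-additivity $(m+n)P_{m+n}^u(s) \leq m P_m^u(s) + n P_n^u(s)$, in particular the manoeuvre that splits the variational supremum into a convex combination; after that the rest of the argument is routine. The monotonicity needs only the observation that unstable Lyapunov exponents are positive, and the Lipschitz estimate is straightforward given the uniform bound on $\tfrac{1}{n}\log\alpha_i$.
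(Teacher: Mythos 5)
The paper does not give its own proof of this lemma; it is quoted from Zhang \cite{Zh}, so there is no in-text argument to compare against. On its own merits your proposal is sound, and I have only two remarks.

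First, the Fekete step is correct as you set it up: the sub-additivity of $\{-\varphi^s(\cdot,f^n)\}$ together with $\int \varphi^s(f^m\cdot,f^n)\,d\mu=\int\varphi^s(\cdot,f^n)\,d\mu$ for $f$-invariant $\mu$ gives, after splitting $h_\mu(f)$ as a convex combination, that $\sup_\mu$ of a sum is bounded by the sum of the suprema, hence $(m+n)P^u_{m+n}(s)\le m P^u_m(s)+n P^u_n(s)$. This is the standard argument and it goes through because the additive variational principle is available on the compact invariant set $\Lambda$.

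Second, for monotonicity you can avoid the detour through Oseledets exponents and the sub-additive variational principle (which in turn needs upper semicontinuity of entropy): since $\Lambda$ is hyperbolic, there is $n_0$ with $\alpha_i(x,f^n)\ge C^{-1}\lambda^{-n}>1$ for all $i\le u$, all $x\in\Lambda$ and all $n\ge n_0$, so for such $n$ the function $s\mapsto\varphi^s(x,f^n)$ is pointwise nondecreasing and hence $s\mapsto P^u_n(s)$ is nonincreasing; passing to the limit (which you have just identified as an infimum) gives monotonicity of $P^\ast$ directly, with no appeal to the variational principle for sub-additive potentials. Your Lyapunov-exponent argument is not wrong, but note it quietly uses $P^\ast(s)=P(f|_\Lambda,\Phi_f(s))$, i.e.\ Proposition \ref{topo pressure appro}, which itself needs upper semicontinuity of $\mu\mapsto h_\mu(f)$; for a hyperbolic set this is fine (expansiveness), but the elementary route is cleaner. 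A small imprecision in the continuity step: the uniform bound on $\tfrac{1}{n}|\log\alpha_i(x,f^n)|$ should be $\max\{\log\|Df\|_{C^0},\log\|Df^{-1}\|_{C^0}\}$, since the lower bound on the singular values needs $\|Df^{-1}\|$; this does not affect the conclusion. The evaluation $P^\ast(0)=h_{\mathrm{top}}(f|_\Lambda)$ is immediate as you say.
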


	Zhang \cite{Zh} proved  that
	\begin{Lem}\label{upperboundlemma}
		Denote $$D_{1}^{u}=\max\{s\in[0,u]:P_{1}^{u}(s)\geq 0\}.$$ Then for every $x\in\Lambda$, we have 
		$$\text{dim}_{H}W^{u}_{\text{loc}}(x)\cap\Lambda\leq D^{u}_{1}.$$
	\end{Lem}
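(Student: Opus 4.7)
The plan is to deduce this directly from Proposition \ref{bound 1} applied to the single iterate $f$ (rather than the rescaling $f^{2^n}$ used in the Main Theorem), and then translate the resulting symbolic pressure back to $P_1^u$.

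First I would dispense with the trivial case $D_1^u = u$. Otherwise, by the preceding lemma $P_1^u$ is continuous and decreasing with $P_1^u(0) = h_{\text{top}}(f|_\Lambda) \geq 0$, so $D_1^u$ is the unique root of $P_1^u(s) = 0$ in $[0,u]$ and $k \triangleq [D_1^u] \in \{0,1,\ldots,u-1\}$ lies in the range for which Proposition \ref{bound 1} applies. Applying that proposition with this $k$ yields, for every $x \in \Lambda$,
$$\overline{\dim}_B W^u_{\text{loc}}(x) \cap \Lambda \leq t,$$
where $t$ is the unique root of $P(\sigma|_{\Sigma_A},\log G + t\log H)=0$, with $G,H$ defined as in Proposition \ref{cover2}.

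Next I would identify the symbolic pressure with $P_1^u$. Unfolding the definitions of $G$ and $H$ and using that the singular values of $(Df|_{E^u})^{-1}$ are the reciprocals, in reverse order, of those of $Df|_{E^u}$, a direct computation gives, for $s\in[k,k+1]$,
$$\log G(\mathbf{i}) + s\log H(\mathbf{i}) = \log C_M - \varphi^s(\pi\mathbf{i},f).$$
Since additive constants do not affect topological pressure, and since the coding map $\pi\colon\Sigma_A\to\Lambda$ is a continuous finite-to-one surjection with $\pi\circ\sigma=f\circ\pi$ (so that Lemma \ref{entropy} gives $h_\mu(\sigma)=h_{\mu\circ\pi^{-1}}(f)$), the variational principle implies
$$P(\sigma|_{\Sigma_A},\log G + s\log H) = P(f|_\Lambda,-\varphi^s(\cdot,f)) = P_1^u(s).$$
Hence $t$ is the unique root of $P_1^u = 0$, i.e.\ $t = D_1^u$, and $\dim_H W^u_{\text{loc}}(x)\cap\Lambda \leq \overline{\dim}_B W^u_{\text{loc}}(x)\cap\Lambda \leq D_1^u$.

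I expect the main obstacle to be the singular-value bookkeeping in the identification step: one must reconcile the singular values of $Df|_{E^u}$ used to define $\varphi^s$ with those of $(Df)^{-1}$ used in $G,H$, and verify that the non-integer interpolation $(s-[s])\log\alpha_{u-[s]}$ in $\varphi^s$ exactly matches the interpolation $\alpha_{k+1}^{s-k}$ built into the singular value function $\phi^s$. Once that identity is in hand, the remainder is a routine variational-principle argument together with the covering estimates already established in the paper.
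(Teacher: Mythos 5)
First, a bookkeeping remark: the paper does not prove this lemma itself; it is quoted verbatim from Zhang \cite{Zh}, so there is no in-paper proof to compare against. What you propose is a re-derivation from the paper's later machinery, so I evaluate it on its own terms.

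Your singular-value identity is correct once $(D_{\pi\mathbf i}f)^{-1}$ is read as $(D_{\pi\mathbf i}f|_{E^u})^{-1}$ (the only reading under which the paper's claim $\sup_{\mathbf i}\alpha_1((D_{\pi\mathbf i}f)^{-1})<\gamma<1$ in Proposition \ref{cover2} makes sense), and the reductions $\dim_H\le\overline{\dim}_B$ and the entropy comparison via Lemma \ref{entropy} are fine. The genuine gap is the sentence ``additive constants do not affect topological pressure''. That is false: for any constant $c$, $P(\sigma,\psi+c)=P(\sigma,\psi)+c$. You correctly obtain $\log G(\mathbf i)+s\log H(\mathbf i)=\log C_M-\varphi^s(\pi\mathbf i,f)$, but this yields
$P(\sigma|_{\Sigma_A},\log G+s\log H)=\log C_M+P_1^u(s)$,
not $P_1^u(s)$. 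Hence the root $t$ delivered by Proposition \ref{bound 1} satisfies $P_1^u(t)=-\log C_M$. Because $C_M$ in Lemma \ref{cover} is a genuine covering constant (and in general $C_M>1$), and $P_1^u$ is strictly decreasing with $P_1^u(D_1^u)=0$, this gives $t>D_1^u$, so the argument only proves the weaker bound $\overline{\dim}_B W^u_{\text{loc}}(x)\cap\Lambda\le t$ for some $t$ strictly larger than $D_1^u$.

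This is precisely the role of the $f^{2^n}$ rescaling and the limit $n\to\infty$ in the proof of Theorem \ref{Main A}: there the pressure for $f^{2^n}$ scales like $2^n$ in $s$ while the constant $\log C_M$ stays fixed, so the discrepancy between the true root and the shifted root is $O(\log C_M/2^n)\to 0$. A single-iterate argument has no such correction mechanism, and the $\log C_M$ contribution is not negligible. To repair the proof along your lines you would need to either run the same $f^N$, $N\to\infty$ argument (which then gives the stronger bound $D^u(f,\Lambda)$ of Theorem \ref{upperbound*}, not the one-iterate $D_1^u$), or produce a covering lemma with multiplicative constant that can be absorbed without affecting the exponential growth rate, which Lemma \ref{cover} as stated does not provide. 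As it stands, the identification step fails and the conclusion does not follow.
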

	
	The main theorem in \cite{Zh} is the following.
	\begin{Thm}\label{upperbound*}
		Denote $$D^{u}(f,\Lambda)=\max\{s\in[0,u]:P^{\ast}(s)\geq 0\}.$$
		Then for every ~$x\in \Lambda$, we have 
		$$\text{dim}_{H}W^{u}_{\text{loc}}(x)\cap \Lambda\leq D^{u}(f,\Lambda).$$
	\end{Thm}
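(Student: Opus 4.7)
The plan is to apply Lemma \ref{upperboundlemma} not just to $f$ itself but to every iterate $f^n$, and then pass to the limit via Proposition \ref{topo pressure appro}.

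For each $n\geq 1$ the set $\Lambda$ remains a locally maximal hyperbolic set for $f^n$ (the hyperbolic splitting is $Df^n$-invariant with contraction/expansion rates $C\lambda^n$), and the local unstable manifold at $x$ for $f^n$ coincides, near $x$, with that for $f$. Since Hausdorff dimension is a local invariant, applying Lemma \ref{upperboundlemma} to $(\Lambda, f^n)$ yields
\begin{equation*}
\text{dim}_H W_{\text{loc}}^u(x)\cap\Lambda \;\leq\; \tilde{s}_n \;:=\; \max\{s\in[0,u] : P(f^n|_\Lambda,-\varphi^s(\cdot,f^n))\geq 0\}
\end{equation*}
for every $n\geq 1$, and hence $\text{dim}_H W_{\text{loc}}^u(x)\cap\Lambda \leq \inf_{n\geq 1}\tilde{s}_n$.

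Next I would show $\inf_{n}\tilde{s}_n \leq D^u(f,\Lambda)$. Fix $s$ with $D^u(f,\Lambda) < s \leq u$. By definition of $D^u(f,\Lambda)$ as the maximum of a closed set, we have $P^*(s) < 0$. Since $\Lambda$ is hyperbolic for $f$ it is expansive, so the entropy map $\mu\mapsto h_\mu(f|_\Lambda)$ is upper semi-continuous, and the sub-additive singular-value potentials $\Phi_f(s)=\{-\varphi^s(\cdot,f^n)\}_{n\geq 1}$ consist of continuous functions. Proposition \ref{topo pressure appro} therefore gives
\begin{equation*}
\lim_{n\to\infty}\tfrac{1}{n}P(f^n|_\Lambda,-\varphi^s(\cdot,f^n)) \;=\; P(f|_\Lambda,\Phi_f(s)) \;=\; P^*(s) \;<\; 0,
\end{equation*}
so $P(f^n|_\Lambda,-\varphi^s(\cdot,f^n)) < 0$ for all sufficiently large $n$, which forces $\tilde{s}_n \leq s$. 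Hence $\inf_n \tilde{s}_n \leq s$ for every $s > D^u(f,\Lambda)$, so $\inf_n \tilde{s}_n \leq D^u(f,\Lambda)$. Combining the two bounds completes the proof.

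The main obstacle I expect is verifying cleanly that Lemma \ref{upperboundlemma}, as stated for $f$, transfers to the iterate $f^n$ with the pressure function $P(f^n|_\Lambda,-\varphi^s(\cdot,f^n))$; this reduces to observing that the time-$1$ singular-value function for the map $g=f^n$ is by definition $\varphi^s(\cdot,f^n)$, and that the hyperbolic set and its local unstable manifolds are unchanged in passing from $f$ to $f^n$. A secondary technical point is the upper semi-continuity of the entropy map on $\Lambda$ required by Proposition \ref{topo pressure appro}, which follows from the standard fact that hyperbolic sets are expansive.
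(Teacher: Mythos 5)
Your argument is correct, and it is exactly the proof that the paper's surrounding material sets up; in fact the paper does not give its own proof of Theorem~\ref{upperbound*}, but merely cites Zhang~\cite{Zh}, after having just recorded Lemma~\ref{upperboundlemma} (the $n=1$ case) and Proposition~\ref{topo pressure appro} (the additive approximation of sub-additive pressure), which are precisely the two tools you combine. Each step checks out: $\Lambda$ is hyperbolic for $f^n$ with the same splitting, the time-one singular-value function of $f^n$ is $\varphi^s(\cdot,f^n)$, the set $W^u_{\text{loc}}(x)\cap\Lambda$ and hence its Hausdorff dimension do not change when passing from $f$ to $f^n$, hyperbolic sets are expansive so entropy is upper semi-continuous, and Proposition~\ref{topo pressure appro} then forces $P(f^n|_\Lambda,-\varphi^s(\cdot,f^n))<0$ eventually for any $s>D^u(f,\Lambda)$. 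The only cosmetic point: your limiting argument takes $s$ in the range $(D^u(f,\Lambda),u]$, which is empty when $D^u(f,\Lambda)=u$; in that degenerate case the bound is trivial since $\dim_H W^u_{\text{loc}}(x)\cap\Lambda\le u$ always, so no harm, but it is worth a one-line remark for completeness.
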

	\begin{Rem}\label{root}
		In fact, $D_{1}^{u}$ is exactly the unique root of $P_{1}^{u}(s)=0$. By Ruelle's inequality and variational principle, we have $P_{1}^{u}(u)\leq 0$. Besides, $P_{1}^{u}(0)>0$. Similarly, $D^{u}(f,\Lambda)$ is the unique root of $P^{\ast}(s)=0$, for this, one can refer to Remark 1 in \cite{BCH} for repellers.
	\end{Rem}

	From now on, we assume that $\Lambda$ is topologically transitive. In Zhang's above result, $D_{1}^{u}$ is defined via the topological pressure on the whole hyperbolic set. We want to study the dimension of the hyperbolic set on the local unstable manifold via the pressure on the unstable leaf. We define the quantity $$\tilde{D}_{1}^{u}=\max\{s\in[0,u]:P_{W^{u}_{\text{loc}}(x)\cap\Lambda}(f,-\varphi^{s}(\cdot,f))\geq 0\}.$$ By the monotonicity of the topological pressure on subsets, we have $\tilde{D}_{1}^{u}\leq D_{1}^{u}$. A natural question is whether $\tilde{D}_{1}^{u}$ is an upper bound for the Hausdorff dimension of the hyperbolic set on the local unstable manifold. We prove that in fact we have $\tilde{D}_{1}^{u}= D_{1}^{u}$, which answers this question positively.

	First, we prove the following result:
	\begin{Thm}\label{pressure on unstable leaf}
		Suppose $g:M\rightarrow M$ is a $C^{1}$ diffeomorphism and $\Lambda_{g}$ is a hyperbolic set for $g$. Then for every $x\in\Lambda_{g}$, $n\in\mathbb{N}$ and $s\in[0,u]$, we have 
		$$P_{\Lambda_{g}}(g,-\frac{1}{n}\varphi^{s}(\cdot,g^{n}))=P_{W^{u}_{\text{loc}}(x)\cap\Lambda_{g}}(g,-\frac{1}{n}\varphi^{s}(\cdot,g^{n})).$$
	\end{Thm}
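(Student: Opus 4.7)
The plan is to reduce the $C^1$ statement to the corresponding $C^{1+\alpha}$ result via structural stability, in the same spirit as the proof of Theorem~\ref{unstablepressure}. The inequality
$$P_{W^u_{\text{loc}}(x)\cap\Lambda_g}\bigl(g,-\tfrac{1}{n}\varphi^s(\cdot,g^n)\bigr) \leq P_{\Lambda_g}\bigl(g,-\tfrac{1}{n}\varphi^s(\cdot,g^n)\bigr)$$
is immediate from monotonicity of the subset topological pressure, so the task is to establish the reverse inequality up to an arbitrarily small error.

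Fix $\varepsilon_1 > 0$. By uniform continuity of $\tfrac{1}{n}\varphi^s(\cdot,g^n)$ on $M$, choose $\delta > 0$ so that $d(y_1,y_2)\leq\delta$ implies $|\tfrac{1}{n}\varphi^s(y_1,g^n)-\tfrac{1}{n}\varphi^s(y_2,g^n)|\leq \varepsilon_1/4$. Invoking Theorem~\ref{structure stability}, I would select a $C^{1+\alpha}$ diffeomorphism $f$ with $d_{C^1}(f,g)$ and $d_{C^1}(f^n,g^n)$ sufficiently small that there exist a hyperbolic set $\Lambda_f$ for $f$ and a conjugating homeomorphism $h:\Lambda_f\to\Lambda_g$ satisfying $d_{C^0}(h,\mathrm{id})+d_{C^0}(h^{-1},\mathrm{id})<\delta$, and moreover (as in the derivation preceding \eqref{pressure relation1})
$$\max_{y\in\Lambda_f}\bigl|\tfrac{1}{n}\varphi^s(h(y),g^n)-\tfrac{1}{n}\varphi^s(y,f^n)\bigr|\leq \tfrac{\varepsilon_1}{2}.$$

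For this $C^{1+\alpha}$ diffeomorphism $f$, the potential $-\tfrac{1}{n}\varphi^s(\cdot,f^n)$ is H\"older continuous on $\Lambda_f$ (singular values of $D_\cdot f^n$ depend H\"older continuously on the base point and stay bounded away from zero on the hyperbolic set). The Climenhaga--Pesin--Zelerowicz theorem \cite{CliPZ} recalled in the introduction then yields
$$P_{\Lambda_f}\bigl(f,-\tfrac{1}{n}\varphi^s(\cdot,f^n)\bigr)=P_{W^u_{\text{loc}}(h^{-1}(x))\cap\Lambda_f}\bigl(f,-\tfrac{1}{n}\varphi^s(\cdot,f^n)\bigr).$$
Since the subset topological pressure is invariant under topological conjugacy by $h$ (after pulling back the potential), and since the standard estimate $|P_Z(T,\varphi)-P_Z(T,\psi)|\leq\|\varphi-\psi\|_\infty$ absorbs the $\varepsilon_1/2$ potential discrepancy on each side, chaining these four ingredients should give
$$\bigl|P_{\Lambda_g}\bigl(g,-\tfrac{1}{n}\varphi^s(\cdot,g^n)\bigr)-P_{W^u_{\text{loc}}(x)\cap\Lambda_g}\bigl(g,-\tfrac{1}{n}\varphi^s(\cdot,g^n)\bigr)\bigr|\leq \varepsilon_1,$$
and letting $\varepsilon_1\to 0$ completes the proof.

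The main obstacle will be matching the \emph{local} qualifier on the unstable manifolds under $h$: the conjugacy sends unstable leaves of $f$ to those of $g$, but it need not respect a prescribed localization size. I would handle this by fixing the $W^u_{\text{loc}}$-size uniformly (and small) over $\Lambda$, and then shrinking the allowed $C^1$-distance in the structural-stability step so that $h$ moves points by far less than this local-leaf size; under this calibration, $h(W^u_{\text{loc}}(h^{-1}(x),f)\cap\Lambda_f)$ and $W^u_{\text{loc}}(x,g)\cap\Lambda_g$ differ only by a thin boundary piece whose contribution to the subset pressure is negligible.
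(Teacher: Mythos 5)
Your proposal follows essentially the same route as the paper: approximate $g$ by a $C^{1+\alpha}$ diffeomorphism via structural stability, invoke the Climenhaga--Pesin--Zelerowicz equality on the perturbed system, and transfer back through conjugacy invariance and uniform continuity of $\varphi^s$, letting $\varepsilon_1\to 0$. Your closing remark about calibrating the $C^1$-distance so that $h$ respects the prescribed local-leaf size is a genuine subtlety that the paper's identity (\ref{pressure relation6}) passes over silently, and your observation that $-\tfrac{1}{n}\varphi^s(\cdot,f^n)$ is H\"older for the $C^{1+\alpha}$ perturbation is a hypothesis of Theorem~\ref{pressure} that deserves the explicit check you give it.
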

	
	With this result and Lemma \ref{upperboundlemma} and Remark \ref{root}, we have the following result:
	\begin{Thm}\label{upper bound for hyperbolic_main1}
		For $x\in\Lambda$, denote by $\tilde{D}_{1}^{u}$ the unique root of $P_{W^{u}_{\text{loc}}(x)\cap\Lambda}(f|_{\Lambda},-\varphi^{s}(\cdot,f))=0$. We have
		$$\text{dim}_{H}W^{u}_{\text{loc}}(x)\cap\Lambda\leq \tilde{D}^{u}_{1}.$$
	\end{Thm}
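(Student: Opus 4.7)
The plan is to combine the ingredients already assembled in the appendix: Lemma \ref{upperboundlemma}, Remark \ref{root}, and the forthcoming Theorem \ref{pressure on unstable leaf}. The strategy is simply to identify $\tilde{D}_1^u$ with $D_1^u$ as defined in Lemma \ref{upperboundlemma} and then quote the known Hausdorff dimension bound.

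First I would specialize Theorem \ref{pressure on unstable leaf} to $n=1$ and $g=f$. This gives, for every $s \in [0,u]$ and every $x \in \Lambda$, the equality
\[
P_{\Lambda}(f, -\varphi^{s}(\cdot, f)) = P_{W^{u}_{\text{loc}}(x) \cap \Lambda}(f, -\varphi^{s}(\cdot, f)).
\]
Since $\Lambda$ is a compact invariant set, the left-hand side coincides with the topological pressure $P(f|_\Lambda, -\varphi^s(\cdot,f)) = P_1^u(s)$ from the appendix. Therefore the function $s \mapsto P_{W^{u}_{\text{loc}}(x) \cap \Lambda}(f, -\varphi^{s}(\cdot, f))$ is identically $P_1^u(s)$, and in particular its unique zero $\tilde{D}_1^u$ coincides with $D_1^u$. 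The uniqueness of the zero for $P_1^u(s)$ is exactly the content of Remark \ref{root} (using Ruelle's inequality to see $P_1^u(u) \leq 0$ and $P_1^u(0) > 0$ together with continuity and monotonicity in $s$).

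Once this identification $\tilde{D}_1^u = D_1^u$ is in hand, I would invoke Lemma \ref{upperboundlemma} directly, which yields
\[
\dim_H W^{u}_{\text{loc}}(x) \cap \Lambda \leq D_1^u = \tilde{D}_1^u,
\]
completing the proof.

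The only substantive content lies in Theorem \ref{pressure on unstable leaf}, where one must show that the pressure computed on the full hyperbolic set equals the pressure computed on a single local unstable leaf; this is the main obstacle of the appendix as a whole, but for the deduction of Theorem \ref{upper bound for hyperbolic_main1} itself the argument is a short chain of identifications. I would also be careful, while writing, to confirm that the uniqueness of the zero of $s \mapsto P_{W^{u}_{\text{loc}}(x) \cap \Lambda}(f, -\varphi^{s}(\cdot, f))$ is inherited from that of $P_1^u(s)$, which is immediate once the two functions are shown to be equal.
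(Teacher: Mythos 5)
Your proposal matches the paper's argument exactly: specialize Theorem \ref{pressure on unstable leaf} to $n=1$, $g=f$ to get $P_\Lambda(f,-\varphi^s(\cdot,f))=P_{W^u_{\text{loc}}(x)\cap\Lambda}(f,-\varphi^s(\cdot,f))$, deduce $\tilde{D}_1^u=D_1^u$, and then invoke Lemma \ref{upperboundlemma} together with Remark \ref{root}. Nothing to add.
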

	
	Denote $\tilde{P}^{\ast}(s)=\lim_{n\rightarrow \infty}P_{W^{u}_{\text{loc}}(x)\cap\Lambda}(f|_{\Lambda,}-\frac{1}{n}\varphi^{s}(\cdot, f^{n})).$
	The main result in this section is the following theorem:
	\begin{Thm}\label{upper bound for hyperbolic_main2}
		For $x\in \Lambda$, denote by $\tilde{D}^{u}(f,\Lambda)$ the unique root of $\tilde{P}^{\ast}(s)=0$. We have
		$$\text{dim}_{H}W^{u}_{\text{loc}}(x)\cap \Lambda\leq \tilde{D}^{u}(f,\Lambda).$$
	\end{Thm}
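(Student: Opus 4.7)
The plan is to reduce Theorem \ref{upper bound for hyperbolic_main2} to Zhang's already-stated Theorem \ref{upperbound*} by establishing the pointwise identity $\tilde{P}^{\ast}(s) = P^{\ast}(s)$ for every $s \in [0,u]$, from which the equality of unique roots $\tilde{D}^{u}(f,\Lambda) = D^{u}(f,\Lambda)$ will follow at once.

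First I would fix $n \in \mathbb{N}$ and $s \in [0,u]$ and invoke Theorem \ref{pressure on unstable leaf} (applied with $g=f$ and $\Lambda_{g}=\Lambda$, taking the additive potential $-\tfrac{1}{n}\varphi^{s}(\cdot, f^{n})$). This gives
\[
P_{W^{u}_{\text{loc}}(x) \cap \Lambda}\!\left(f|_{\Lambda}, -\tfrac{1}{n}\varphi^{s}(\cdot, f^{n})\right) = P_{\Lambda}\!\left(f|_{\Lambda}, -\tfrac{1}{n}\varphi^{s}(\cdot, f^{n})\right) = P_{n}^{u}(s)
\]
for every $n \in \mathbb{N}$. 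Letting $n \to \infty$, the left-hand side becomes $\tilde{P}^{\ast}(s)$ and the right-hand side becomes $P^{\ast}(s)$ by the very definition of these quantities, so $\tilde{P}^{\ast}(s) = P^{\ast}(s)$ holds identically in $s \in [0,u]$.

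Next I would use this identity to transfer the analytic properties of $P^{\ast}$ directly to $\tilde{P}^{\ast}$: continuity in $s$, monotone decrease, and hence existence of a unique root. By Remark \ref{root}, $D^{u}(f,\Lambda)$ is the unique root of $P^{\ast}(s) = 0$, so the same value is the unique root of $\tilde{P}^{\ast}(s) = 0$, which means $\tilde{D}^{u}(f,\Lambda) = D^{u}(f,\Lambda)$. Applying Theorem \ref{upperbound*} then yields
\[
\text{dim}_{H} W^{u}_{\text{loc}}(x) \cap \Lambda \leq D^{u}(f,\Lambda) = \tilde{D}^{u}(f,\Lambda),
\]
which is exactly the desired bound.

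The real difficulty is already absorbed into Theorem \ref{pressure on unstable leaf}; once that theorem is in hand, the present one is a bookkeeping consequence, because it only requires swapping the limit in $n$ with the identification of the critical exponent. Had I tried to proceed without Theorem \ref{pressure on unstable leaf}, the obstacle would have been to control a limit of pressures on the non-$f$-invariant set $W^{u}_{\text{loc}}(x) \cap \Lambda$ and compare it with the corresponding limit on the invariant set $\Lambda$; the leaf-equality theorem collapses the two $n$-th terms exactly, so no such delicate estimate is needed in this reduction.
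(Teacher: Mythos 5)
Your proposal is correct and follows essentially the same route as the paper: both reduce the statement to Zhang's Theorem \ref{upperbound*} by using Theorem \ref{pressure on unstable leaf} to identify $\tilde{P}^{\ast}(s)$ with $P^{\ast}(s)$ termwise in $n$ and hence equate the critical exponents $\tilde{D}^{u}(f,\Lambda)$ and $D^{u}(f,\Lambda)$. Your write-up merely spells out the intermediate step of passing to the limit in $n$, which the paper leaves implicit.
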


	\begin{Rem}\label{invalid}
		By Remark $\ref{root}$ and Proposition $\ref{topo pressure appro}$, the upper bound given by Theorem $\ref{upperbound*}$ is exactly the root of sub-additive topological pressure on the hyperbolic set for the sub-additive singular-valued potential. The proof of Proposition $\ref{topo pressure appro}$ requires the variational principle of the topological pressure for the sub-additive potential, which does not hold for the topological pressure on a general non-compact set. Thus it seems that it is difficult to  use the root of sub-additive topological pressure on the local unstable manifold to give an upper bound of the Hausdorff dimension of the hyperbolic set on the unstable direction. 
	\end{Rem}
	By Theorem \ref{pressure on unstable leaf}, we have $D^{u}_{1}=\tilde{D}^{u}_{1}$ and  $D^{u}(f,\Lambda)=\tilde{D}^{u}(f,\Lambda)$. Therefore, by Lemma \ref{upperboundlemma} and Theorem \ref{upperbound*}, we finish the proof of Theorem \ref{upper bound for hyperbolic_main1} and \ref{upper bound for hyperbolic_main2}. So we only need to prove Theorem \ref{pressure on unstable leaf}. For this, we use a result of Climenhaga, Pesin and Zelerowicz \cite{CliPZ}.
	\begin{Thm}\label{pressure}
		Given a $C^{1+\alpha}$ diffeomorphism $f$ and a topologically transitive hyperbolic set $\Lambda$ for $f$. For a H$\ddot{\text{o}}$lder continuous potential $\varphi:\Lambda\rightarrow \mathbb{R}$ and any $x\in \Lambda$, we have 
		$$P_{\Lambda}(f,\varphi)=P_{W^{u}_{\text{loc}}(x)\cap\Lambda}(f,\varphi).$$
	\end{Thm}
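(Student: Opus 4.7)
The plan is to prove the two inequalities separately. The easy direction $P_{W^u_{\text{loc}}(x)\cap\Lambda}(f,\varphi)\leq P_\Lambda(f,\varphi)$ is immediate from monotonicity of the Carath\'eodory-style pressure $P_Z$ on nested subsets, since $W^u_{\text{loc}}(x)\cap\Lambda\subset\Lambda$.

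For the reverse inequality, the main tool is the \emph{Bowen property} of H\"older potentials on uniformly hyperbolic sets: because $f$ is $C^{1+\alpha}$ the stable distribution contracts exponentially, so for any two points $y,z$ on a common local stable manifold one has $d(f^ky,f^kz)\leq C\lambda^k d(y,z)$ for all $k\geq 0$, and H\"older continuity of $\varphi$ yields a uniform bound $|S_n\varphi(y)-S_n\varphi(z)|\leq K$ independent of $n$. I would then fix a Markov partition $\mathcal{P}=\{P_1,\dots,P_\ell\}$ of $\Lambda$ of diameter below the local product radius, so that each $P_i\cap\Lambda$ factors as $(W^u_{\text{loc}}(p_i)\cap P_i)\times(W^s_{\text{loc}}(p_i)\cap P_i)$, and introduce the stable holonomy $\pi^s_i\colon P_i\cap\Lambda\to W^u_{\text{loc}}(p_i)\cap\Lambda$ sending $[y_u,y_s]\mapsto y_u$. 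Topological transitivity of $f|_\Lambda$ together with the Markov property supplies, for each $i$, an integer $N_i$ with $f^{N_i}(W^u_{\text{loc}}(x)\cap\Lambda)\cap\text{int}(P_i)\neq\varnothing$; setting $N=\max_i N_i$ and composing $\pi^s_i$ with a short stable-leaf excursion back to $W^u_{\text{loc}}(x)$ produces a map $\Psi\colon\Lambda\to W^u_{\text{loc}}(x)\cap\Lambda$ of multiplicity bounded in terms of $\ell$ and $N$.

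Given any $(n,\varepsilon)$-separated set $E\subset\Lambda$, its image $\Psi(E)$ becomes an $(n-N,c\varepsilon)$-separated subset of $W^u_{\text{loc}}(x)\cap\Lambda$ once $\varepsilon$ is shrunk to absorb the product-chart distortion, while the Bowen property guarantees $|S_n\varphi(y)-S_n\varphi(\Psi(y))|\leq K'$, so every exponential weight $e^{S_n\varphi(y)}$ is distorted by a uniformly bounded multiplicative factor. Summing over $E$ and passing to the Carath\'eodory limit $n\to\infty$, $\varepsilon\to 0$ then yields $P_\Lambda(f,\varphi)\leq P_{W^u_{\text{loc}}(x)\cap\Lambda}(f,\varphi)$, which combined with the easy direction gives the claimed equality.

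The main obstacle is that $W^u_{\text{loc}}(x)\cap\Lambda$ is non-compact, so the separated-set manipulations above must be packaged inside the Carath\'eodory construction defining $P_Z$ via covers by Bowen balls, and one must verify that $\Psi$ transports such covers consistently at every scale. In \cite{CliPZ} this is resolved cleanly by first building leaf-wise reference measures---a Margulis/Gibbs construction on unstable leaves adapted to $\varphi$---which directly identify the two Carath\'eodory dimensional characteristics and thereby bypass the combinatorics of the Markov coding used in the sketch above.
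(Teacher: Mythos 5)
The paper does not prove this statement at all: it is imported verbatim from Climenhaga--Pesin--Zelerowicz \cite{CliPZ} and used as a black box, so there is no ``paper's own proof'' to compare against. Your final paragraph does correctly identify what \cite{CliPZ} actually does (leaf-wise Carath\'eodory reference measures of Margulis/Gibbs type that directly identify the two dimensional characteristics), which is a genuinely different and more robust route than your separated-set sketch.

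That said, your sketch as written has a real gap, and it is precisely the one you flag but do not close. The pressure $P_{W^u_{\text{loc}}(x)\cap\Lambda}(f,\varphi)$ is a Carath\'eodory dimension characteristic defined through countable covers by Bowen balls $B_{n_i}(x_i,\delta)$ with \emph{variable} lengths $n_i\geq N$ and an $\inf$--$\lim$ structure; on a set that is not $f$-invariant this is not interchangeable with a $\sup$ over $(n,\varepsilon)$-separated sets at fixed $n$, which is what your argument manipulates. (A small correction: $W^u_{\text{loc}}(x)\cap\Lambda$ is in fact compact --- the obstruction is non-invariance, not non-compactness.) Consequently, transporting a separated set $E\subset\Lambda$ through the holonomy-plus-excursion map $\Psi$ and bounding the distortion of $S_n\varphi$ by the Bowen property does give an inequality between the \emph{classical} pressure on $\Lambda$ and a separated-set quantity on the leaf, but it does not by itself control the Carath\'eodory outer measure $m_P(W^u_{\text{loc}}(x)\cap\Lambda,s,\varphi,\delta)$; you would still need to show $\Psi$ carries an efficient Bowen-ball cover of $\Lambda$ of variable depths to one of the leaf, with uniformly bounded multiplicity and weight distortion at \emph{every} scale, and that step is exactly where the argument is nontrivial. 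In addition, the multiplicity of $\Psi$ restricted to an $(n,\varepsilon)$-separated set is controlled by $\varepsilon$ (roughly $\varepsilon^{-\dim E^s}$ points per stable plaque), not merely by $\ell$ and $N$; this constant is harmless after taking logs, but the claim as stated is inaccurate. Either carry out the cover-transport argument in the Carath\'eodory framework, or simply cite \cite{CliPZ} as the paper does.
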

	
	We utilize the structural stability of the hyperbolic set and Theorem $\ref{pressure}$ to prove Theorem \ref{pressure on unstable leaf}.
	
	\begin{proof}{[Proof of Theorem \ref{pressure on unstable leaf}]}

		For $y\in M$ and $n\in\mathbb{N}$, denote the singular values of $D_yg^{n}$ decreasingly by
		$$\alpha_{1}(y,g^{n})\geq\cdots\geq\alpha_{d}(y,g^{n}),$$
		where $d$ is the dimension of the manifold $M$.
		For every $s\in [0,u]$, denote
		$$\varphi^{s}(y, g^{n})\triangleq\sum_{i=u-[s]+1}^{u}\log \alpha_{i}(y,g^{n})+(s-[s])\log\alpha_{u-[s]}(y,g^{n}).$$
		For $\varepsilon_{1}>0$, there exists $\delta>0$, such that for any $x_{1},x_2\in M$ satisfying  $d(x_{1},x_{2})\leq\delta$, we have 	$$|\frac{1}{n}\varphi^{s}(x_{1},g^{n})-\frac{1}{n}\varphi^{s}(x_{2},g^{n})|\leq \frac{\varepsilon_{1}}{4}.$$
		By Theorem \ref{structure stability}, for $\delta>0$ above, there exists $\varepsilon>0$ such that for any diffeomorphism $f:M\rightarrow M$ with $d_{C^{1}}(f,g)<\varepsilon$ and $d_{C^{1}}(f^n,g^n)<\varepsilon$, there exists  a hyperbolic set $\Lambda_{f}$ for $f$ and a homeomorphism $h:\Lambda_{f}\rightarrow \Lambda_{g}$, such that $d_{C^{0}}(Id,h)+d_{C^{0}}(Id,h^{-1})<\delta$ and for $x\in \Lambda_f$, one has $$|\frac{1}{n}\varphi^{s}(x,g^{n})-\frac{1}{n}\varphi^{s}(x,f^{n})|\leq \frac{\varepsilon_{1}}{4}.$$
		Thus
		\begin{align*}
			&\max_{x\in\Lambda_{f}}|\frac{1}{n}\varphi^{s}(h(x),g^{n})-\frac{1}{n}\varphi^{s}(x,f^{n})|\\
			\leq & \max_{x\in\Lambda_{f}}(|\frac{1}{n}\varphi^{s}(h(x),g^{n})-\frac{1}{n}\varphi^{s}(x,g^{n})|+|\frac{1}{n}\varphi^{s}(x,g^{n})-\frac{1}{n}\varphi^{s}(x,f^{n})|)\\
			\leq &\frac{\varepsilon_{1}}{2}.
		\end{align*}
		Choose a $C^{1+\alpha}$ diffeomorphism $f$ satisfying~$d_{C^{1}}(f,g)<\varepsilon$ and $d_{C^{1}}(f^n,g^n)<\varepsilon$. For $x\in\Lambda_{g}$, by Theorem \ref{pressure}, we have
		\begin{equation}\label{pressure relation4}
			P_{\Lambda_{f}}(f,-\frac{1}{n}\varphi^{s}(\cdot,f^{n}))=P_{W^{u}_{\text{loc}}(h^{-1}(x))\cap\Lambda_{f}}(f,-\frac{1}{n}\varphi^{s}(\cdot,f^{n})).
		\end{equation}	
		By the property of topological pressure, we have 
		\begin{equation}\label{pressure relation5}
			P_{\Lambda_{g}}(g,-\frac{1}{n}\varphi^{s}(\cdot,g^{n}))=P_{\Lambda_{f}}(f,-\frac{1}{n}\varphi^{s}(h(\cdot),g^{n}))
		\end{equation}
		and 
		\begin{equation}\label{pressure relation6}
			P_{W^{u}_{\text{loc}}(x)\cap \Lambda_{g}}(g,-\frac{1}{n}\varphi^{s}(\cdot,g^{n}))
			=P_{W^{u}_{\text{loc}}(h^{-1}(x))\cap\Lambda_{f}}(f,-\frac{1}{n}\varphi^{s}(h(\cdot),g^{n})).
		\end{equation}
		By (\ref{pressure relation4}), (\ref{pressure relation5}) and (\ref{pressure relation6}), we have
		\begin{align*}
			&|P_{\Lambda_{g}}(g,-\frac{1}{n}\varphi^{s}(\cdot,g^{n}))-P_{W^{u}_{\text{loc}}(x)\cap\Lambda_{g}}(g,-\frac{1}{n}\varphi^{s}(\cdot,g^{n}))|\\
			=&|P_{\Lambda_{f}}(f,-\frac{1}{n}\varphi^{s}(h(\cdot),g^{n}))-P_{W^{u}_{\text{loc}}(x)\cap\Lambda_{g}}(g,-\frac{1}{n}\varphi^{s}(\cdot,g^{n}))|\\
			\leq &|P_{\Lambda_{f}}(f,-\frac{1}{n}\varphi^{s}(h(\cdot),g^{n}))-P_{\Lambda_{f}}(f,-\frac{1}{n}\varphi^{s}(\cdot,f^{n}))|\\
			+&|P_{\Lambda_{f}}(f,-\frac{1}{n}\varphi^{s}(\cdot,f^{n}))-P_{W^{u}_{\text{loc}}(h^{-1}(x))\cap\Lambda_{f}}(f,-\frac{1}{n}\varphi^{s}(\cdot,f^{n}))|\\
			+&|P_{W^{u}_{\text{loc}}(h^{-1}(x))\cap\Lambda_{f}}(f,-\frac{1}{n}\varphi^{s}(\cdot,f^{n}))-P_{W^{u}_{\text{loc}}(h^{-1}(x))\cap\Lambda_{f}}(f,-\frac{1}{n}\varphi^{s}(h(\cdot),g^{n}))|\\
			+&|P_{W^{u}_{\text{loc}}(h^{-1}(x))\cap\Lambda_{f}}(f,-\frac{1}{n}\varphi^{s}(h(\cdot),g^{n}))-P_{W^{u}_{\text{loc}}(x)\cap\Lambda_{g}}(g,-\frac{1}{n}\varphi^{s}(\cdot,g^{n}))|\\
			\leq &2\max_{x\in\Lambda_{f}}|\frac{1}{n}\varphi^{s}(h(x),g^{n})-\frac{1}{n}\varphi^{s}(x,f^{n})|\leq \varepsilon_{1}.
		\end{align*}
		Letting $\varepsilon_{1}$ tend to $0$, we finish the proof.
	\end{proof}
	\begin{Rem}
		Since the potential function in Theorem $\ref{pressure}$ requires H$\ddot{\text{o}}$lder continuity, $(\ref{pressure relation4})$ does not hold for a general continuous potential $\varphi:M\rightarrow \mathbb{R}$. Thus our proof is invalid for a general continuous potential $\varphi:M\rightarrow \mathbb{R}$. 
	\end{Rem}

\end{document}